\def\section{%
  \@startsection{section}{1}
    {\z@}
    {2.0ex plus 0.8ex minus .1ex}
    {1.0ex plus .2ex}
    {\large\bfseries\boldmath\centering\MakeTextUppercase}%
}
\def\ve{{\rm \varepsilon}}
\newcommand{\dd}{\sqrt{-1}\partial\bar{\partial}}
\newcommand{\krf}{K\"ahler-Ricci flow\ }
\newcommand{\dl}{\left(\frac{\partial}{\partial t}-\Delta_{\omega_{\gamma\varepsilon j}(t)}\right)}
\newcommand{\osc}{\mathop{\mathrm{osc}}}
\newcommand{\tr}{\mathrm{tr}}
\newtheorem{thm}{Theorem}[section]
\newtheorem{lem}[thm]{Lemma}
\newtheorem{rem}[thm]{Remark}
\newtheorem{pro}[thm]{Proposition}
\newtheorem{defi}[thm]{Definition}
\numberwithin{equation}{section}
\begin{document}

\title[\sc\small R\MakeLowercase{egularizing property of the twisted conical} K\MakeLowercase{\"ahler-}R\MakeLowercase{icci flow}]{\sc\LARGE R\MakeLowercase{egularizing property of the twisted conical} K\MakeLowercase{\"ahler}-R\MakeLowercase{icci flow}}
\keywords{conical K\"ahler-Ricci flow, twisted K\"ahler-Ricci flow, conical singularity.}
\author[\sc\small J\MakeLowercase{iawei} L\MakeLowercase{iu}, S\MakeLowercase{hiyu} Z\MakeLowercase{hang and} X\MakeLowercase{i} Z\MakeLowercase{hang}]{\sc\large J\MakeLowercase{iawei} L\MakeLowercase{iu}, S\MakeLowercase{hiyu} Z\MakeLowercase{hang and} X\MakeLowercase{i} Z\MakeLowercase{hang}}
\address{Jiawei Liu\\School of Mathematics and Statistics\\ Nanjing University of Science \& Technology\\ Xiaolingwei Street 200\\ Nanjing 210094\\ China.} \email{jiawei.liu@njust.edu.cn}
\address{Shiyu Zhang\\School of Mathematical Sciences\\ University of Science and Technology of China\\ Jinzhai Road 96\\ Hefei 230026\\ China.} 
\email{shiyu123@mail.ustc.edu.cn}
\address{Xi Zhang\\ School of Mathematics and Statistics\\ Nanjing University of Science \& Technology\\ Xiaolingwei Street 200\\ Nanjing 210094\\ China.} 
\email{mathzx@njust.edu.cn}
\subjclass[2020]{53E30,\ 35K96,\ 35K67.}
\thanks{X. Zhang is supported by National Key R and D Program of China 2020YFA0713100, NSFC (Grant Nos.12141104 and 11721101). J.W. Liu is supported by NSFC (Grant No.12371059), Jiangsu Specially-Appointed Professor Program and Fundamental Research Funds for the Central Universities, S.Y. Zhang is supported by NSFC (Grant No.12371062).}

\maketitle
\vskip -3.99ex

\centerline{\noindent\mbox{\rule{3.99cm}{0.5pt}}}

\vskip 5.01ex

\ \ \ \ {\bf Abstract.} In this paper, we show the regularity and uniqueness of the twisted conical K\"ahler-Ricci flow running from a positive closed current with zero Lelong number, which extends the regularizing property of the smooth twisted K\"ahler-Ricci flow, known as Guedj-Zeriahi's existence theorem \cite{VGAZ2} and Di Nezza-Lu's uniqueness theorem \cite{NL2017}, to the conical singularity case.


\vspace{7mm}

\section{Introduction}

The K\"ahler-Ricci flow was first introduced by Cao \cite{Cao} to give a parabolic proof of the Calabi-Yau theorem in 1980s, its general existence was studied by Tsuji \cite{Tsuji} and Tian-Zhang \cite{Tianzzhang}. Then Song-Tian \cite{JSGT} considered the regularity property of the K\"ahler-Ricci flow coming from weak initial metric which admits continuous potential. Later, Sz\'ekelyhidi-Tosatti \cite{GSVT} proved such regularity property for the complex Monge-Amp\`ere flow on K\"ahler manifold, and they also obtained a regularity result for the complex Monge-Amp\`ere equation as the application. Nie \cite{Nie} extended this regularity property to the parabolic flow on Hermitian manifold.  As a generalization of the K\"ahler-Ricci flow, the twisted K\"ahler-Ricci flow which adds a smooth twisted form in K\"ahler-Ricci flow was first studied by Collins-Sz\'ekelyhidi \cite{TC} and the first author \cite{JWL}. By using new arguments, Guedj-Zeriahi \cite{VGAZ2} and Di Nezza-Lu \cite{NL2017} gave the best regularity and uniqueness results of this flow starting from weak metric which only admits zero Lelong number, and then T$\operatorname{\hat{o}}$ extended these results to the complex Monge-Amp\`ere flows on K\"ahler  \cite{TaTo1} and Hermitian manifolds  \cite{TaTo2}. Recently, Zhang \cite{KWZ} proved Tian's partial $C^0$-estimate along this flow, and the authors \cite{JWLXZ3} showed the relation between the convergence of  these flows and the greatest Ricci lower bound.

When adding a non-smooth twisted form in K\"ahler-Ricci flow, more precisely, the twisted form is only a current of the integration along a divisor, such flow was called conical K\"ahler-Ricci flow which was first introduced by Jeffres-Mazzeo-Rubinstein \cite{JMR}. Then Chen-Wang \cite{CW,CW1}, the authors \cite{JWLXZ}, Wang \cite{YQW} and Shen \cite{LMSH2, LMSH1} studied the existence and regularity property of this flow that runs from a class of conical K\"ahler metrics. Later, the authors \cite{JWLXZ1} and the first author and Zhang \cite{JWLCJZ} extended the initial metic to the weak one as in \cite{JSGT, GSVT}, and recently Li-Shen-Zheng \cite{LSZ} obtained a maximum solution when the potential of the weak initial metric is bounded on log canonical pairs. In all above results, the weakest assumption is that the initial metric potential is in $L^\infty$-sense.

In this paper, inspired by Guedj-Zeriahi's existence theorem \cite{VGAZ2} and Di Nezza-Lu's uniqueness theorem \cite{NL2017}, we show that the twisted conical K\"ahler-Ricci flow can also run from a positive closed current with zero Lelong number and then improve the regularity. 

 Let $(X, \omega)$ be a compact K\"ahler manifold with complex dimension $n$ and $D$ be a smooth divisor. Assume that $\hat{\omega}\in [\omega]$ is a positive closed current with zero Lelong number and that $\eta$ is a smooth closed $(1,1)$-form. Fix $\gamma\in(0,1]$, the twisted conical K\"ahler-Ricci flow that is considered in this paper takes the following form,
\begin{equation}\label{CK}
\left\{
\begin{aligned}
 &\ \frac{\partial \omega_{\gamma}(t)}{\partial t}=-{\rm Ric}(\omega_{\gamma}(t))+(1-\gamma)[D]+\eta\\
 &\ \ \ \ \ \ \ \ \ \ \ \ \ \ \ \ \ \ \ \ \ \ \ \ \ \ \ \ \ \ \ \ \ \ \ \ \ \ \ \ \ \ \ \ \ \ \ \ \ \ \ \ \ \ \ \ \ \ \ , \\
 &\ \omega_{\gamma}(t)|_{t=0}=\hat{\omega}
\end{aligned}
\right.\tag{$CKRF^{\eta}_{\gamma}$}
\end{equation}
where $[D]$ is the current of integration along $D$. In particular, if $\eta=0$, the twisted conical K\"ahler-Ricci flow \eqref{CK} is the conical K\"ahler-Ricci flow which has also been studied by Edwards \cite{GEDWA1, GEDWA}, Nomura \cite{Nomura}, Zhang \cite{YSZ1,YSZ} and the authors \cite{JWLXZ2}. 

Denote 
\begin{equation}\label{0131001}
T^{\eta}_{\gamma,\max}=\sup\left\{t\geqslant0\Big| [\omega]+t\left(-c_1(X)+(1-\gamma)c_1(L_D)+[\eta]\right)>0\right\},
\end{equation}
where $L_D$ is the line bundle associated with divisor $D$. We show that the twisted conical K\"ahler-Ricci flow \eqref{CK} admits a unique solution on the maximal time interval $[0,T^\eta_{\gamma,\max})$, which is the main results of this paper.
\begin{thm}\label{0131002}
Let $\hat\omega\in[\omega]$ be a positive current with zero Lelong number. For any $\gamma\in(0,1]$ and any smooth closed $(1,1)$-form $\eta$, there exists a unique weak solution to the twisted conical K\"ahler-Ricci flow \eqref{CK} on $(0,T^\eta_{\gamma,\max})$ in the sense of Definition \ref{04.5}.
\end{thm}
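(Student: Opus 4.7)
The plan is to reduce \eqref{CK} to a scalar parabolic complex Monge--Amp\`ere equation for the potential, approximate the weak initial current $\hat\omega$ by smooth K\"ahler metrics, invoke the existing theory for smooth initial data, and then pass to the limit via uniform a priori estimates that exploit the vanishing of the Lelong number of $\hat\omega$. Fix a smooth Hermitian metric $h$ on $L_D$, a defining section $s$ of $D$, and a smooth volume form $\Omega$. Choose a smooth family of $(1,1)$-forms $\chi_t$ with $[\chi_t]$ K\"ahler for $t\in[0,T^\eta_{\gamma,\max})$ and $\partial_t\chi_t=-\mathrm{Ric}(\Omega)+(1-\gamma)\Theta_h+\eta$, where $\Theta_h$ is the curvature of $h$. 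Writing $\hat\omega=\omega+\sqrt{-1}\partial\bar\partial\hat\varphi$, the flow \eqref{CK} is then equivalent to the scalar parabolic Monge--Amp\`ere equation
\begin{equation*}
\frac{\partial\varphi}{\partial t}=\log\frac{(\chi_t+\sqrt{-1}\partial\bar\partial\varphi)^n}{\Omega}+(1-\gamma)\log|s|^2_h,\qquad \varphi(0)=\hat\varphi.
\end{equation*}

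Since $\hat\varphi$ has zero Lelong number everywhere, Demailly's analytic regularization produces a decreasing sequence of smooth $\omega$-psh functions $\hat\varphi_j\searrow\hat\varphi$, and the Skoda--Zeriahi theorem furnishes the uniform control $\int_X e^{-p\hat\varphi_j}\,\Omega\leq C_p$ for every $p>0$. For each $j$ the smooth initial datum $\hat\omega_j:=\omega+\sqrt{-1}\partial\bar\partial\hat\varphi_j$ launches a unique conical solution $\varphi_j$ on $[0,T^\eta_{\gamma,\max})$ by the existing theory for the (twisted) conical K\"ahler--Ricci flow in \cite{JMR,CW,JWLXZ,JWLXZ1,JWLCJZ}; the task is then to upgrade the family $\{\varphi_j\}$ to a weak solution of \eqref{CK} starting from the singular current $\hat\omega$.

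The heart of the proof is a set of estimates on $\varphi_j$ that are uniform in $j$. An upper bound $\varphi_j(t)\leq\hat\varphi_j+Ct$ follows from the maximum principle, using that $|s|^{-2(1-\gamma)}_h$ is integrable because $\gamma>0$. The lower bound $\varphi_j(t)\geq -C(t)$ for $t>0$ is the decisive step: following \cite{VGAZ2,NL2017}, one combines Kolodziej's pluripotential $L^\infty$ estimate with the uniform $L^p$ control on $e^{-\hat\varphi_j}$ and the $L^q$-integrability of $|s|^{-2(1-\gamma)}_h$, choosing $p,q$ compatibly. Differentiating the equation and applying the maximum principle to $t\dot\varphi_j$ then yields two-sided bounds on the speed; a parabolic Chern--Lu inequality against a fixed smooth conical reference metric $\omega^*_t$ gives a uniform upper bound on $\tr_{\omega^*_t}\omega_{\gamma,j}(t)$; and Evans--Krylov together with Schauder bootstrap then deliver uniform $C^k_{\mathrm{loc}}$ estimates on $(X\setminus D)\times(0,T^\eta_{\gamma,\max})$ and the sharp conical regularity transverse to $D$.

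Passing to the limit $j\to\infty$ yields $\varphi(t):=\lim_j\varphi_j(t)$, which is a weak solution of the scalar flow in the sense of Definition \ref{04.5} and satisfies $\varphi(t)\to\hat\varphi$ in $L^1(X)$ as $t\to 0^+$. For uniqueness, a Di Nezza--Lu style argument shows that any two weak solutions $\varphi,\psi$ with the same initial data must coincide: both enjoy the same interior conical regularity for $t>0$, so after a time mollification one applies the integration-by-parts formula for bounded $\omega$-psh functions on $[\varepsilon,T]$ and passes $\varepsilon\to 0^+$ using the $L^1$-convergence at $t=0$. The principal obstacle will be to make the lower bound and the $t\dot\varphi_j$ estimate genuinely uniform in $j$ in the presence of the conical weight: the barriers and test functions of \cite{JSGT,GSVT,VGAZ2} must be modified to absorb $|s|^{2(1-\gamma)}_h$, and the integrability exponents of $e^{-\hat\varphi_j}$ and $|s|^{-2(1-\gamma)}_h$ have to be tracked compatibly so that Kolodziej's $L^\infty$ estimate remains applicable along the flow.
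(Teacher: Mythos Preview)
Your overall strategy is close in spirit to the paper's, but there is a structural difference and a resulting gap. The paper uses a \emph{double} approximation: it regularizes both the initial current (via Demailly, $\varphi_j\searrow\varphi_0$) and the divisor current $[D]$ (via $\theta_\varepsilon=\theta+\dd\log(\varepsilon^2+|s|^2_h)$), and runs the \emph{smooth} twisted flows \eqref{TK}. All maximum-principle estimates (upper bound, $t\dot\varphi$ bound, lower bound on $\dot\varphi$, Laplacian estimate) are then carried out on globally smooth objects, uniformly in both $j$ and $\varepsilon$, against the smooth reference metrics $\omega_{\gamma\varepsilon}$ of Campana--Guenancia--P\u{a}un; only afterward does one let $j\to\infty$ and then $\varepsilon\to0$.

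Your single-approximation route---invoke the existing conical theory for each smooth initial datum $\hat\omega_j$ and then pass $j\to\infty$---is not obviously doomed, but it leaves a genuine gap: the solutions $\varphi_j$ you produce are conical, not smooth, along $D$, so every maximum-principle argument you list (the Guedj--Zeriahi bound on $t\dot\varphi_j$, the lower bound on $\dot\varphi_j$, the Chern--Lu/Laplacian estimate) needs a device to force the extremum away from $D$. The paper achieves this only in the uniqueness section, by adding a barrier $a\log|s|^2_h$ and bounding its Laplacian via a uniform lower metric bound $\omega_s\geqslant c\,\omega$; you would have to do the analogous work for each of the existence estimates, and you do not mention it. A second imprecision: the order of the a priori estimates is inverted in your sketch. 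The correct flow, as in the paper, is (i) upper bound $\varphi_j(t)\leqslant C$ from a constant supersolution (your claim $\varphi_j(t)\leqslant\hat\varphi_j+Ct$ does not follow directly, since the right-hand side of the equation contains the unbounded-below term $(1-\gamma)\log|s|^2_h$); (ii) the Guedj--Zeriahi inequality $\dot\varphi_j(t)\leqslant -\varphi_0/t+C/t$; (iii) an $L^p$ bound on the Monge--Amp\`ere density using zero Lelong number and the $L^q$-integrability of $|s|^{-2(1-\gamma)}_h$; (iv) Ko\l odziej's oscillation estimate; (v) lower bound on $\dot\varphi_j$ and the Laplacian estimate.

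Finally, the uniqueness is not obtained by an integration-by-parts formula. The paper proves comparison principles for conical sub/supersolutions using the $a\log|s|^2_h$ barrier (Propositions~\ref{0313003}, \ref{0318001}, \ref{0314009}), and then handles the general (non-semipositive $\nu_\gamma$) case by an exponential time reparametrization $u(t)=\tilde Ce^t\psi\bigl(\tilde C^{-1}(1-e^{-t})\bigr)$ that converts the background into one with semipositive twist. Your sketch omits both of these ingredients, which are precisely where the conical singularity interacts with the Di~Nezza--Lu argument.
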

\begin{rem}\label{02281001}
If $\gamma=1$, the twisted conical \krf  \eqref{CK} is the smooth twisted K\"ahler-Ricci flow whose existence and uniqueness was proved by Guedj-Zeriahi (Theorem $A$ in \cite{VGAZ2}) and Di Nezza-Lu (Theorem $1.3$ in \cite{NL2017}).
\end{rem}
The method we use here is smooth approximation which is first used by the authors \cite{JWLXZ} and Wang \cite{YQW}. Since there are two singular terms in the twisted conical K\"ahler-Ricci flow \eqref{CK}, which are the initial metic $\hat\omega$ and current term $[D]$, we approximate flow \eqref{CK} by using the smooth twisted K\"ahler-Ricci flow
\begin{equation}\label{TK}
\left\{
\begin{aligned}
&\ \frac{\partial \omega_{\gamma\ve j}(t)}{\partial t}=-{\rm Ric}(\omega_{\gamma\ve j}(t))+(1-\gamma)\theta_{\ve}+\eta\\
 &\ \ \ \ \ \ \ \ \ \ \ \ \ \ \ \ \ \ \ \ \ \ \ \ \ \ \ \ \ \ \ \ \ \ \ \ \ \ \ \ \ \ \ \ \ \ \ \ \ \ \ \ \ \ \ \ \ \ \ , \\
 &\ \omega_{\gamma\ve j}(t)|_{t=0}=\omega_j
\end{aligned}
\right.\tag{$TKRF^{\eta}_{\gamma\ve j}$}
\end{equation}
where $\theta_\ve=\theta+\sqrt{-1}\partial\bar\partial\log(\ve^2+|s|_h^2)$ is smooth closed $(1,1)$-form approximating $[D]$ as $\ve$ goes to $0$, $s$ is the definition section of $D$, $h$ is a smooth Hermitian metric on $L_D$ with curvature $\theta$, and $\{\omega_j\}$ is a sequence of smooth K\"ahler metrics approximating $\hat\omega$ as $j$ goes to infinity.

The main steps are deriving uniform estimates that are independent of $j$ and $\ve$ on the smooth twisted K\"ahler-Ricci flows \eqref{TK}. By using Guedj-Zeriahi's arguments in \cite{VGAZ2}, we can show that the uniform estimates are independent of $j$. The key steps in this paper are deriving uniform estimates also independent of $\ve$. After deriving these uniform estimates, we prove Theorem \ref{0131002} by letting $j$ go to infinity and then $\ve$ go to zero. For the uniqueness, we extend Di Nezza-Lu's idea on smooth twisted K\"ahler-Ricci flow to the conical case by using the techniques introduced by the authors \cite{JWLXZ19} which ensure that the auxiliary functions achieve their extremums outside the divisor.

The paper is organized as follows. We first recall some results on the smooth twisted K\"ahler-Ricci flows, and give the definitions of existence and uniqueness of the twisted conical K\"ahler-Ricci flow in section \ref{Preliminaries}. Then we derive the uniform estimates for the smooth twisted K\"ahler-Ricci flows and then obtain the existence in section \ref{Existence}. At last section, we prove the uniqueness for twisted conical K\"ahler-Ricci flow.

\medskip

{\bf Acknowledgements.} J.W. Liu is grateful to Professor Miles Simon for his kind guidance on geometric flow and partial differential equations when he worked in his group at Otto-von-Guericke-University Magdeburg.

\section{Preliminaries}\label{Preliminaries}

In this section, we recall some results on the twisted K\"ahler-Ricci flows in \cite{VGAZ2, NL2017}, and then give the definitions of existence and uniqueness of the conical K\"ahler-Ricci flows. 

For the smooth twisted K\"ahler-Ricci flow, Guedj-Zeriahi gave the following regularizing property.

\begin{thm}[Theorem $\rm A$ in \cite{VGAZ2}]\label{0214001}
Let $\eta$ be a smooth closed $\left(1,1\right)$-form and $\tilde\omega\in[\omega]$ be a positive current with zero Lelong number. There exists a unique maximal family $\left(\omega_t\right)_{0<t<T^\eta_{1,\max}}$ of K\"ahler forms such that
\begin{equation}\label{0214002}
\frac{\partial\omega_t}{\partial t}=-{\rm Ric}\left(\omega_t\right)+\eta
\end{equation}
and $\omega_t$ weakly converges towards $\tilde\omega$ as $t\searrow0^+$.
\end{thm}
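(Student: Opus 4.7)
The plan is to convert the geometric flow into a scalar parabolic complex Monge--Amp\`ere equation, regularize the initial current, and derive a priori estimates that depend only on the vanishing Lelong number assumption. First I would fix a smooth family of reference K\"ahler forms $\hat\omega_t\in[\omega]+t([\eta]-c_1(X))$ for $t\in[0,T^\eta_{1,\max})$ and a smooth positive volume form $\Omega$ chosen so that $-{\rm Ric}(\Omega)+\eta=\partial_t\hat\omega_t$. Writing $\omega_t=\hat\omega_t+\sqrt{-1}\partial\bar\partial\varphi_t$ then reduces \eqref{0214002} to
\begin{equation*}
\frac{\partial\varphi}{\partial t}=\log\frac{(\hat\omega_t+\sqrt{-1}\partial\bar\partial\varphi)^n}{\Omega},\qquad \varphi(0,\cdot)=\psi,
\end{equation*}
where $\tilde\omega=\omega+\sqrt{-1}\partial\bar\partial\psi$ and $\psi\in\mathrm{PSH}(X,\omega)$ has zero Lelong number at every point.

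Next I would approximate $\psi$ by a decreasing sequence of smooth, strictly $\omega$-psh functions $\psi_j\searrow\psi$ via Demailly regularization, and solve the smooth equation with initial data $\psi_j$ using the classical existence theory of Cao, Tsuji and Tian--Zhang, producing smooth potentials $\varphi_j$ on $[0,T^\eta_{1,\max})\times X$. The heart of the argument lies in uniform-in-$j$ a priori estimates on $(0,T]\times X$ for each $T<T^\eta_{1,\max}$. The upper bound $\varphi_j\leq C$ follows from the parabolic maximum principle together with $\psi_j\leq\sup_X\psi_1$. For the lower bound, which is the principal obstacle, I would combine Skoda's integrability theorem (which, precisely because $\psi$ has vanishing Lelong numbers, guarantees $e^{-\alpha\psi}\in L^1(\Omega)$ for every $\alpha>0$) with a parabolic Ko\l odziej-type $L^\infty$ estimate in order to obtain a bound of the form $\varphi_j(t,\cdot)\geq -C(1+|\log t|)$. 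With $\varphi_j$ under control away from $t=0$, the standard parabolic Chern--Lu / Aubin--Yau argument supplies an upper bound on $\tr_{\hat\omega_t}\omega_{t,j}$, after which $\dot\varphi_j$ is controlled and the Evans--Krylov and Schauder theories furnish $C^\infty_{loc}((0,T^\eta_{1,\max})\times X)$ estimates.

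A diagonal subsequence of $\{\varphi_j\}$ then converges smoothly on compact subsets of $(0,T^\eta_{1,\max})\times X$ to a smooth solution of \eqref{0214002}, and Hartogs' lemma combined with the monotone convergence $\psi_j\searrow\psi$ yields the weak convergence $\omega_t\to\tilde\omega$ as $t\searrow 0^+$. Maximality of the time interval is automatic: beyond $T^\eta_{1,\max}$ the cohomology class $[\omega]+t([\eta]-c_1(X))$ fails to be K\"ahler and no such family can exist. For uniqueness, I would apply the parabolic comparison principle to the difference of two potentials on $[\delta,T]\times X$; the interior smoothing on $(0,T^\eta_{1,\max})\times X$ together with the common weak initial datum allows one to let $\delta\searrow 0^+$ and to conclude $\omega_t'\equiv\omega_t''$. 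The decisive step throughout is the $j$-independent lower bound on $\varphi_j$ as $t\searrow 0$; this is exactly where the zero Lelong number hypothesis is indispensable, entering through Skoda integrability.
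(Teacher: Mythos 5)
This statement is quoted from Guedj--Zeriahi and the paper offers no proof of it (it is a preliminary, with the precise meaning spelled out in Remark \ref{0214003}); your outline is essentially the strategy of \cite{VGAZ2}, which the present paper replicates in the conical setting in Sections \ref{Existence}--\ref{Uniqueness}: Demailly regularization $\psi_j\searrow\psi$, the maximum-principle bound $t\dot\varphi_j\leqslant\varphi_j(t)-\psi_j+nt$, Skoda integrability of $e^{-\alpha\psi}$ for all $\alpha>0$ feeding into Ko{\l}odziej's $L^p$-estimate, then the Laplacian bound and Evans--Krylov. That part of your sketch is sound.

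There are, however, two genuine gaps. First, the convergence $\varphi_t\to\psi$ as $t\searrow0^+$ does not follow from ``Hartogs plus monotone convergence'': monotonicity of $\psi_j$ only gives the upper bound $\limsup_{t\to0}\varphi_t\leqslant\psi$. Since $\varphi_j(t,\cdot)$ is not monotone in $t$, you must separately prevent the potentials from dropping below $\psi$ as $t\to0$; this requires constructing an explicit subsolution of the form $(1-2lt)\psi_j+t\rho_j+n(t\log t-t)$, where $\rho_j$ solves an auxiliary \emph{elliptic} Monge--Amp\`ere equation with density $e^{-2l\psi_j}\Omega$ bounded in $L^p$ via Skoda (this is exactly Proposition \ref{0229029} in the conical setting). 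Your sketch omits this step entirely. Second, your uniqueness argument --- comparison on $[\delta,T]\times X$ and then $\delta\searrow0^+$ --- does not close, because two solutions only converge to $\tilde\omega$ in the sense of currents (the potentials converge in $L^1$, not uniformly), so the error term $\sup_X(\varphi'_\delta-\varphi''_\delta)$ need not tend to $0$. This is precisely why Theorem A asserts only the existence of a unique \emph{maximal} solution, and why genuine uniqueness is a separate theorem of Di Nezza--Lu (Theorem \ref{0219002} here), whose proof again relies on the subsolution lower bound of the previous point together with Hartogs' lemma and a time-shifting/rescaling device. As written, your proposal silently claims the stronger uniqueness without the ingredients needed for it.
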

\begin{rem}\label{0214003}
We denote $\tilde\omega=\omega+\dd\tilde\varphi$ and write flow \eqref{0214002} as the following parabolic Monge-Amp\`ere equation
\begin{equation}\label{0219001}
\frac{\partial \varphi_t}{\partial t}=\log\frac{(\theta_t+\dd \varphi_t)^n}{\omega^n}+h,
\end{equation}
where $\theta_t=\omega+t\eta-t\rm Ric(\Omega)$, $\Omega$ is a smooth volume form and $h\in \mathcal{C}^\infty\left(X,\mathbb{R}\right)$ is a smooth function such that $\Omega=e^{-h}\omega^n$. The existence, weak convergence back to $\tilde\omega$ and uniqueness in Theorem \ref{0214001} are in the following sense. 

There exists a unique of smooth strictly $\theta_t$-psh functions $\varphi_t$ satisfying equation \eqref{0219001} in $(0,T^{\eta}_{1,\max})$, which converges to $\tilde\varphi$ in $L^1(X)$-sense as $t\searrow0^+$, such that $\omega_t=\theta_t+\dd\varphi_t$ satisfies flow \eqref{0214002}, and it converges towards $\tilde\omega$ as $t\searrow0^+$ in the sense of currents.

The uniqueness should be understood in the following weak sense, if $\psi_t$ is another solution to the equation \eqref{0219001} in above sense, then it lies below $\varphi_t$ which is obtained by Guedj-Zeriahi, that is, $\varphi_t$ is the maximal solution.
\end{rem}
Then Di Nezza-Lu improved above uniqueness result.
\begin{thm}[Uniqueness of Theorem $1.3$ in \cite{NL2017} ]\label{0219002}
If the Lelong number of $\tilde\omega$ is zero. Then any weak solution to twisted \krf  \eqref{0214002} starting from $\tilde\omega$ is maximal. In other words, the flow \eqref{0214002} is unique.
\end{thm}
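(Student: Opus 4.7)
The plan is to combine Guedj-Zeriahi's maximality from Theorem~\ref{0214001} with a regularization-and-comparison argument that crucially exploits the zero Lelong number hypothesis. Let $\varphi_t$ denote Guedj-Zeriahi's maximal solution of \eqref{0219001} and $\psi_t$ any other weak solution starting from $\tilde\varphi$ in the sense of Remark~\ref{0214003}. Maximality already provides $\psi_t\le\varphi_t$, so the entire task is to establish $\psi_t\ge\varphi_t$ on $(0,T^{\eta}_{1,\max})\times X$.

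First I would produce a decreasing sequence of smooth $(1+\delta_k)\omega$-psh approximants $\tilde\varphi_k\searrow\tilde\varphi$ via Demailly regularization with $\delta_k\to 0$. Since $\tilde\omega$ has zero Lelong number, the Demailly-Koll\'ar integrability theorem gives $e^{-\alpha\tilde\varphi}\in L^1(X)$ for every $\alpha>0$, which lets one arrange $(\omega+\dd\tilde\varphi_k)^n=f_k\,\omega^n$ with $\|f_k\|_{L^p(X)}$ bounded uniformly in $k$ for every $p<\infty$. For each $k$, standard Yau-type estimates produce a smooth solution $\varphi_t^k$ of \eqref{0219001} with initial datum $\tilde\varphi_k$, and the smooth-initial-data comparison principle yields the monotone convergence $\varphi_t^k\searrow\varphi_t$ on $(0,T^{\eta}_{1,\max})\times X$.

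The heart of the argument is a delayed-time comparison. For any $s>0$, the function $\psi_{t+s}$ is smooth on $[0,T^{\eta}_{1,\max}-s)\times X$ by interior parabolic regularity of weak Monge-Amp\`ere flows, hence solves a time-shifted flow to which the parabolic maximum principle applies against $\varphi_t^k$. Absorbing the discrepancy between $\theta_{t+s}$ and $\theta_t$ through a Gronwall factor produces an estimate of the form
\begin{equation*}
\inf_X(\psi_{t+s}-\varphi_t^k) \ge e^{-Ct}\inf_X(\psi_s-\tilde\varphi_k) - C(e^{Ct}-1)s.
\end{equation*}
Letting $k\to\infty$ first, then $s\to 0^+$, and using $\varphi_t^k\searrow\varphi_t$ in the limit yields $\psi_t\ge\varphi_t$, and combined with maximality forces $\psi_t=\varphi_t$.

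The main obstacle is the limit $s\to 0^+$ in the displayed inequality: $\psi_s$ converges to $\tilde\varphi$ only in $L^1(X)$, and one needs to convert this into a pointwise comparison $\liminf_{s\to 0^+}\inf_X(\psi_s-\tilde\varphi_k)\ge -\mathrm{o}_k(1)$. This is exactly where the zero Lelong number assumption is used a second time, via a Hartogs-type upper-semicontinuity lemma applied to the smooth decreasing sequence $\tilde\varphi_k$ and the quasi-psh family $\{\psi_s\}_{s>0}$; without the Lelong hypothesis the initial value could concentrate mass and the infimum would drift to $-\infty$, breaking the comparison.
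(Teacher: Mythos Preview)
The paper does not give its own proof of Theorem~\ref{0219002}; it is quoted from Di~Nezza--Lu as a preliminary. The paper's Section~\ref{Uniqueness} does, however, prove the conical analogue (see Proposition~\ref{0314009} and the comparison theorem immediately following it), and that argument specialised to $\gamma=1$ is essentially Di~Nezza--Lu's. Measured against that, your outline has a genuine gap.

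Your displayed inequality is only useful if, for each fixed $k$, the quantity $\inf_X(\psi_s-\tilde\varphi_k)$ stays bounded below as $s\to 0^+$. But $\tilde\varphi_k$ is smooth and bounded, while $\psi_s\to\tilde\varphi$ in $L^1$ with $\tilde\varphi$ typically unbounded below (zero Lelong number does \emph{not} force boundedness). Thus $\inf_X\psi_s\to-\infty$ in general, and the right-hand side of your inequality collapses to $-\infty$. Hartogs' lemma cannot repair this: for a family $\psi_s$ of quasi-psh functions converging in $L^1$ it yields $\limsup_{s\to 0^+}\sup_X(\psi_s-f)\le\sup_X(\tilde\varphi-f)$ for continuous $f$, i.e.\ upper semicontinuity of the \emph{supremum}. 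It gives no control on the infimum, and $-\psi_s$ is not quasi-psh, so reversing the sign is illegitimate. (Your own last paragraph also contradicts the stated order ``$k\to\infty$ first, then $s\to 0^+$'': sending $k\to\infty$ for fixed $s$ replaces $\tilde\varphi_k$ by the unbounded $\tilde\varphi$, which is even worse.)

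What is missing is an explicit lower barrier for \emph{any} weak solution. One solves the auxiliary Monge--Amp\`ere equation $(\omega+\dd\rho)^n=e^{\rho-h-2l\tilde\varphi}\omega^n$, which is well posed with $\|\rho\|_{L^\infty}\le C$ precisely because zero Lelong number gives $e^{-2l\tilde\varphi}\in L^p$ for all $p$; then $(1-2lt)\tilde\varphi+t\rho+n(t\log t-t)$ is a subsolution, yielding
\[
\psi(t)\ \ge\ (1-2lt)\,\tilde\varphi - Ct + n(t\log t - t)
\]
for every weak solution $\psi$ (this is Proposition~\ref{0314009} in the conical setting). The comparison is then run by time-shifting the \emph{other} flow: $u(t):=\varphi(t+t_1)+C(t_1)$ has \emph{continuous} initial value $u(0)\ge(1-2lt_1)\tilde\varphi$, and now Hartogs' lemma applies in the correct direction to give $\psi(t)-u(t)\le\sup_X(\psi_0-u(0))\le 2lt_1\sup_X\tilde\varphi$, as in Proposition~\ref{0318001}. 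Letting $t_1\to 0$ gives $\psi\le\varphi$; swapping roles gives equality. For non-semipositive twisting one first reparametrises time exponentially so that the background form becomes monotone, exactly as in the paper's proof of the comparison theorem in Section~\ref{Uniqueness}.
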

\begin{rem}\label{0219003}
The weak solution and maximal in Theorem \ref{0219002} are in the sense of  Remark \ref{0214003}.
\end{rem}
Next, we rewrite the twisted conical K\"ahler-Ricci flow 
\begin{equation}\label{CK}
\left\{
\begin{aligned}
 &\ \frac{\partial \omega_{\gamma}(t)}{\partial t}=-{\rm Ric}(\omega_{\gamma}(t))+(1-\gamma)[D]+\eta\\
 &\ \ \ \ \ \ \ \ \ \ \ \ \ \ \ \ \ \ \ \ \ \ \ \ \ \ \ \ \ \ \ \ \ \ \ \ \ \ \ \ \ \ \ \ \ \ \ \ \ \ \ \ \ \ \ \ \ \ \  \\
 &\ \omega_{\gamma}(t)|_{t=0}=\hat{\omega}
\end{aligned}
\right.\tag{$CKRF^{\eta}_{\gamma}$}
\end{equation}
as a complex Monge-Amp\`ere equation which admits conical singularity.

Fix $\gamma\in(0,1]$. For any $T<T^\eta_{\gamma,\max}$, there exists a smooth volume form $\Omega_\gamma$ such that
\begin{equation}\label{0219004}
\omega_{\gamma t}=\omega+t\left(-{\rm Ric}(\Omega_\gamma)+(1-\gamma)\theta+\eta\right)>0
\end{equation}
for all $t\in[0,T]$.

In fact, since $[\omega]+T(-c_1(X)+(1-\gamma)c_1(L_D)+[\eta])>0$, there exists a smooth closed $(1,1)$-form
\begin{equation}\label{0219005}
\kappa_\gamma\in[\omega]+T(-c_1(X)+(1-\gamma)c_1(L_D)+[\eta])
\end{equation}
satisfying $\kappa_\gamma>0$. We denote
\begin{equation}\label{0219005}
\omega_{\gamma t}=\omega+\frac{t}{T}\left(\kappa_\gamma-\omega\right)=(1-\frac{t}{T})\omega+\frac{t}{T}\kappa_\gamma.
\end{equation}
It is obvious that 
\begin{equation}\label{02190051}
\omega_{\gamma t}\in[\omega]+t\left(-c_1(X)+(1-\gamma)c_1(L_D)+[\eta]\right)\ \ \ \text{and}\ \ \  \omega_{\gamma t}>0
\end{equation}
for all $t\in[0,T]$. 

On the other hand, since
\begin{equation}\label{0219006}
-\frac{1}{T}\left(\kappa_\gamma-\omega\right)+(1-\gamma)\theta+\eta\in c_1(X)
\end{equation} 
is a smooth closed $(1,1)$-form, by Calabi-Yau theorem, there exists a smooth volume form $\Omega_\gamma$ such that
\begin{equation}\label{}
{\rm Ric}(\Omega_\gamma)=-\frac{1}{T}\left(\kappa_\gamma-\omega\right)+(1-\gamma)\theta+\eta,
\end{equation}
which implies that for any $t\in[0,T]$,
\begin{equation}\label{0219007}
\omega_{\gamma t}=\omega+t\left(-{\rm Ric}(\Omega_\gamma)+(1-\gamma)\theta+\eta\right).
\end{equation}
It is obvious that there is a constant $C>1$ depending only on $\gamma$, $T$, $\theta$, $\eta$ and $\omega$ such that
\begin{equation}\label{02190071}
\frac{1}{C}\omega\leqslant\omega_{\gamma t}\leqslant C\omega
\end{equation}
for all $t\in[0,T]$. We denote 
\begin{equation}\label{0219004}
\Omega_\gamma=e^{-h_\gamma}\omega^n\ \ \ \ \ \text{and}\ \ \ \ \ \omega_{\gamma t}=\omega+t \nu_\gamma,
\end{equation}
where $\nu_\gamma=-{\rm Ric}(\Omega_\gamma)+(1-\gamma)\theta+\eta$.

Assume that 
\begin{equation}\label{0220001}
\hat\omega=\omega+\dd\varphi_0,
\end{equation}
then the twisted conical \krf can be written as the following Monge-Amp\`ere equation
\begin{equation}\label{CC}
\left\{
\begin{aligned}
 &\ \frac{\partial \varphi_{\gamma}(t)}{\partial t}=\log\frac{\left(\omega_{\gamma t}+\dd\varphi_{\gamma}(t)\right)^n}{\omega^n}+h_\gamma+(1-\gamma)\log|s|_h^2\\
 &\ \ \ \ \ \ \ \ \ \ \ \ \ \ \ \ \ \ \ \ \ \ \ \ \ \ \ \ \ \ \ \ \ \ \ \ \ \ \ \ \ \ \ \ \ \ \ \ \ \ \ \ \ \ \ \ \ \ \ \ \ \ \ \ \ \ \ \ \ \ \ \ \ \ \ \ \ \ \ \ \ \ \ .\\
 &\ \varphi_{\gamma}(t)|_{t=0}=\varphi_0
\end{aligned}
\right.
\end{equation}
\begin{defi}\label{0220003}
By saying that a closed positive $(1,1)$-current $\omega$ with locally bounded potentials is a conical K\"ahler metric with cone angle $2\pi \gamma $ ($0<\gamma\leqslant1$) along $D$, we mean that $\omega$ is a smooth K\"ahler metric on $M\setminus D$. And near each point $p\in D$, there exists a local holomorphic coordinate $(z_{1}, \cdots, z_{n})$ in a neighborhood $U$ of $p$ such that $D=\{z_{n}=0\}$ and $\omega$ is asymptotically equivalent to the model conical metric
\begin{equation}\label{0220004}
 \sum_{j=1}^{n-1} \sqrt{-1}dz_{j}\wedge d\overline{z}_{j}+ \frac{\sqrt{-1} dz_{n}\wedge d\overline{z}_{n}}{|z_{n}|^{2(1-\gamma)}} \ \ \ on\ \ U.\end{equation}
\end{defi}
For $\gamma\in(0,1)$,
\begin{equation}\label{0220004}
\omega_\gamma=\omega+k\sqrt{-1}\partial\bar{\partial}|s|_h^{2\gamma}
\end{equation}
is a conical K\"ahler metric with cone angle $2\pi \gamma$ along D, which was given by Donaldson \cite{SD2}. 
\begin{defi}\label{04.5}We call $\omega_\gamma(t)$ a weak solution to the conical K\"ahler-Ricci flow \eqref{CK} on spacetime $(0,T^\eta_{\gamma,\max})\times X$ if it satisfies the following conditions.
\begin{itemize}
  \item  For any $0<\delta<T<T^\eta_{\gamma,\max}$, there exists a constant $C$ such that for $t\in(\delta,T)$,
\begin{equation}
C^{-1}\omega_\gamma\leqslant\omega_\gamma(t)\leqslant C\omega_\gamma\ \ \ in\ \ \ X\setminus D;
\end{equation}
   \item  On $(0,T^\eta_{\gamma,\max})\times(X\setminus D)$, $\omega_\gamma(t)$ satisfies the smooth twisted K\"ahler-Ricci flow
  \begin{equation}\label{0221001}
\frac{\partial \omega_{\gamma}(t)}{\partial t}=-{\rm Ric}(\omega_{\gamma}(t))+\eta;
\end{equation}
  \item On $(0,T^\eta_{\gamma,\max})\times X$, $\omega_\gamma(t)$ satisfies equation \eqref{CK} in the sense of currents;
  \item There exists a metric potential $\varphi_\gamma(t)\in \mathcal{C}^{2,\alpha,\gamma}\left( X\right)\cap \mathcal{C}^{\infty}\left(X\setminus D\right)$ such that $\omega_\gamma(t)=\omega_{\gamma t}+\sqrt{-1}\partial\bar{\partial}\varphi_\gamma(t)$, $\lim\limits_{t\rightarrow0^{+}}\|\varphi_\gamma(t)-\varphi_{0}\|_{L^{1}(M)}=0$ and $\lim\limits_{t\rightarrow0^{+}}\varphi_\gamma(t)=\varphi_{0}$ in $X\setminus D$;
  \end{itemize}
\end{defi}
\begin{rem}\label{14}  In Definition \ref{04.5}, by saying that $\omega_\gamma(t)$ satisfies equation \eqref{CK} in the sense of currents on $X_{T}:=(0,T^\eta_{\gamma,\max})\times X$, we mean that for any smooth $(n-1,n-1)$-form $\zeta(t)$ with compact support in $X_T$, we have
  \begin{equation}\label{022100100}
  \int_{X_{T}}\frac{\partial \omega_\gamma(t)}{\partial t}\wedge \zeta(t,x)dt=\int_{X_{T}}(-Ric(\omega_\gamma(t))+(1-\gamma)[D]+\eta)\wedge \zeta(t,x)dt,
  \end{equation}
  where the integral on the left side can be written as
  \begin{equation}\label{0221001001}
  \int_{X_{T}}\frac{\partial \omega_\gamma(t)}{\partial t}\wedge \zeta(t,x)dt=-\int_{X_{T}} \omega_\gamma(t)\wedge\frac{\partial\zeta(t,x)}{\partial t}dt.
  \end{equation}
\end{rem}
\begin{defi}\label{0221001002}
We call $\phi(t)$ a weak solution to equation \eqref{CC} if $\omega_{\gamma t}+\dd\phi(t)$ is a weak solution to the conical K\"ahler-Ricci flow \eqref{CK} in the sense of Definition \ref{04.5}.
\end{defi}
\begin{rem}\label{0502001}
From Definitions \ref{04.5} and \ref{0221001002}, for any $0<\delta<T<T^\eta_{\gamma,\max}$, if $\phi(t)$ is a weak solution to equation \eqref{CC}, there exists a constant $C$ such that
\begin{equation}\label{031800399}
C^{-1}\omega_\gamma\leqslant\omega_{\gamma t}+\sqrt{-1}\partial\bar{\partial}\phi(t)\leqslant C\omega_\gamma.
\end{equation}
Hence
\begin{equation}\label{031800499}
\frac{\partial \phi(t)}{\partial t}=\log\frac{\left(\omega_{\gamma t}+\dd\phi(t)\right)^n}{\omega^n}+h_\gamma+(1-\gamma)\log|s|_h^2
\end{equation}
is uniformly bounded in $[\delta,T]\times (X\setminus D)$, which implies that
\begin{equation}\label{031800599}
\|\phi(t)-\phi(t')\|_{L^\infty(X)}\leqslant C|t-t'|\ \ \ \text{for all}\ t,t'\in[\delta,T].
\end{equation}
Therefore, for any $t>0$, there holds
\begin{equation}\label{0502002}
\lim\limits_{s\to t^+}\|\phi(s)-\phi(t)\|_{L^\infty(X)}=0.
\end{equation}
\end{rem}
There are two singular terms in the twisted conical \krf \eqref{CK}, which are the initial metic $\hat\omega$ and current term $[D]$. We choose a sequence of smooth K\"ahler metrics $\omega_j$ approximating to $\hat\omega$ as in \cite{VGAZ2}, that is, by Demailly's regularization result \cite{JPD}, there exists a sequence of smooth strictly $\omega$-psh functions $\varphi_{j}$ decreasing to $\varphi_0$ as $j\nearrow\infty$, and 
\begin{equation}\label{0220002}
\omega_j=\omega+\dd\varphi_j.
\end{equation}
For the current term $[D]$, as in \cite{JWLXZ, YQW}, we approximate it by $\theta_\ve=\theta+\dd\log\left(\ve^2+|s|_h^2\right)$ as $\ve\searrow0$. Therefore, the smooth twisted K\"ahler-Ricci flow approximating to the twisted conical \krf is
\begin{equation}\label{TK}
\left\{
\begin{aligned}
&\ \frac{\partial \omega_{\gamma\ve j}(t)}{\partial t}=-{\rm Ric}(\omega_{\gamma\ve j}(t))+(1-\gamma)\theta_{\ve}+\eta\\
 &\ \ \ \ \ \ \ \ \ \ \ \ \ \ \ \ \ \ \ \ \ \ \ \ \ \ \ \ \ \ \ \ \ \ \ \ \ \ \ \ \ \ \ \ \ \ \ \ \ \ \ \ \ \ \ \ \ \ \ ,\\
 &\ \omega_{\gamma\ve j}(t)|_{t=0}=\omega_j
\end{aligned}
\right.\tag{$TKRF^{\eta}_{\gamma\ve j}$}
\end{equation}
which is equivalent to the following complex Monge-Amp\`ere equation
\begin{equation}\label{TC}
\left\{
\begin{aligned}
 &\ \frac{\partial \varphi_{\gamma\ve j}(t)}{\partial t}=\log\frac{\left(\omega_{\gamma t}+\dd\varphi_{\gamma\ve j}(t)\right)^n}{\omega^n}+h_\gamma+(1-\gamma)\log\left(\ve^2+|s|_h^2\right)\\
 &\ \ \ \ \ \ \ \ \ \ \ \ \ \ \ \ \ \ \ \ \ \ \ \ \ \ \ \ \ \ \ \ \ \ \ \ \ \ \ \ \ \ \ \ \ \ \ \ \ \ \ \ \ \ \ \ \ \ \ \ \ \ \ \ \ \ \ \ \ \ \ \ \ \ \ \ \ \ \ \ \ \ \ \ \ \ \ \ \ \ \ \ \ .\\
 &\ \varphi_{\gamma\ve j}(t)|_{t=0}=\varphi_{j}
\end{aligned}
\right.
\end{equation}

In order to show that $\omega_\gamma(t)$ is a conical K\"ahler metric with cone angle $2\pi\gamma$ along $D$ at any positive time, we should prove that $\omega_\gamma(t)$ is equivalent to the conical K\"ahler metric $\omega_\gamma$. Therefore, in smooth approximation, we need to prove that $\omega_{\gamma\ve j}(t)$ is equivalent to the smooth K\"ahler metric $\omega_{\gamma\ve}$, which approximates $\omega_\gamma$ as $\ve\searrow0$. We choose $\omega_{\gamma\ve}$ to be Campana-Guenancia-P$\breve{a}$un's metric (equation $(18)$ in \cite{CGP}), that is,
\begin{equation}\label{0227001}
\omega_{\gamma\ve}=\omega+k\dd\chi_{\gamma\ve}=\omega+k\frac{\left\langle D's,D's\right\rangle}{(\ve^2+|s|_h^2)^{1-\gamma}}-\frac{k}{\gamma}\left((\varepsilon^2+|s|_h^2)^\gamma-\varepsilon^{2\gamma}\right)\theta,
\end{equation}
where $k\in\mathbb{R}$ is a positive constant, and 
\begin{equation}\label{0227002}
\chi_{\gamma\ve}=\frac{1}{\gamma}\int_0^{|s|_h^2}\frac{(\varepsilon^2+r)^\gamma-\varepsilon^{2\gamma}}{r}dr.
\end{equation}
From equation $(15)$ in \cite{CGP}, we know that there exists a constant independent of $\ve$ such that
\begin{equation}\label{02270021}
0\leqslant\chi_{\gamma\ve}\leqslant C.
\end{equation}
If $k$ is sufficiently small, then
\begin{equation}\label{0227003}
\omega_{\gamma\ve}\geqslant \frac{1}{2}\omega
\end{equation}
for all $\ve\in[0,1]$. Furthermore, we can choose a smaller $k$ such that
\begin{equation}\label{0227004}
\frac{1}{2}\omega-\frac{\left(C-\frac{1}{2}\right)k}{\gamma}\left((\varepsilon^2+|s|_h^2)^\gamma-\varepsilon^{2\gamma}\right)\theta\geqslant0
\end{equation}
for all $\ve\in[0,1]$, where $C$ is the constant in \eqref{02190071}. Then by using \eqref{02190071}, \eqref{0227001}, \eqref{0227003} and \eqref{0227004}, we have
\begin{equation}\label{0227005}
\omega_{\gamma t\ve}=\omega_{\gamma t}+k\dd\chi_{\gamma\ve}\leqslant(2C-1)\omega_{\gamma\ve},
\end{equation}
and
\begin{equation}\label{02270066}
\begin{aligned}
\omega_{\gamma t\ve}&\geqslant\frac{1}{C}\left(\omega+Ck\dd\chi_{\gamma\ve}\right)\\
&=\frac{1}{C}\left(\frac{1}{2}\omega+\frac{k}{2}\dd\chi_{\gamma\ve}+\frac{1}{2}\omega+\left(C-\frac{1}{2}\right)k\frac{\left\langle D's,D's\right\rangle}{(\ve^2+|s|_h^2)^{1-\gamma}}-\frac{\left(C-\frac{1}{2}\right)k}{\gamma}\left((\varepsilon^2+|s|_h^2)^\gamma-\varepsilon^{2\gamma}\right)\theta\right)\\
&\geqslant\frac{1}{2C}\omega_{\gamma\ve},
\end{aligned}
\end{equation}
that is, there is a constant $C$ depending only on $\gamma$, $T$, $\theta$, $\eta$ and $\omega$ such that
\begin{equation}\label{0227005}
\frac{1}{C}\omega_{\gamma\ve}\leqslant\omega_{\gamma t\ve}\leqslant C\omega_{\gamma\ve}.
\end{equation}
We choose $\omega_{\gamma t\ve}$ being the background metric, then equation \eqref{TC} is equivalent to equation
\begin{equation}\label{TC1}
\left\{
\begin{aligned}
 &\ \frac{\partial \phi_{\gamma\ve j}(t)}{\partial t}=\log\frac{\left(\omega_{\gamma t\ve}+\dd\phi_{\gamma\ve j}(t)\right)^n}{\omega_{\gamma\ve}^n}+F_{\gamma\ve}\\
 &\ \ \ \ \ \ \ \ \ \ \ \ \ \ \ \ \ \ \ \ \ \ \ \ \ \ \ \ \ \ \ \ \ \ \ \ \ \ \ \ \ \ \ \ \ \ \ \ \ \ \ \ \ \ \ \ \ \ \ \ \ \ \ \ \ \ \ \ ,\\
 &\ \phi_{\gamma\ve j}(t)|_{t=0}=\varphi_{j}-k\chi_{\gamma\ve}
\end{aligned}
\right.
\end{equation}
where $\phi_{\gamma\ve j}(t)=\varphi_{\gamma\ve j}(t)-k\chi_{\gamma\ve}$ and $F_{\gamma\ve}=\log\frac{\omega_{\gamma\ve}^n\left(\ve^2+|s|_h^2\right)^{1-\gamma}}{\omega^n}+h_\gamma$. In fact, there exists a uniform constant $C$ independent of $\ve$ such that
\begin{equation}\label{0227006}
|F_{\gamma,\ve}|\leqslant C\ \ \ \ \ \text{on}\ \ \ X
\end{equation}
for all $\ve\in(0,1)$ (equation $(25)$ in \cite{CGP}). In the following sections, in order to show the uniform estimates for the smooth twisted \krf \eqref{TK}, we need only to consider the equation \eqref{TC1}.

\section{Existence}\label{Existence}

In this section, we prove the uniform estimates for equation \eqref{TC1}, which does not depend on $j$ and $\ve$. Fix $\gamma\in(0,1)$, a smooth closed $(1,1)$-form $\eta$ and $0<T<T^\eta_{\gamma,\max}$. We first construct a super-solution to equation \eqref{TC1}, which gives the uniform upper bound to $\phi_{\gamma\ve j}(t)$. In the following, we say a constant being uniform if it does not depend on $j$ and $\ve$.

Since $\varphi_{j}$ is smooth strictly $\omega$-psh function and decreases to $\varphi_0$ as $j\nearrow\infty$, there exists a uniform constant $C$ such that
\begin{equation}\label{02280001}
\varphi_0\leqslant\varphi_j\leqslant C.
\end{equation}
Furthermore, combining with \eqref{02270021}, there holds
\begin{equation}\label{02280002}
\phi_{\gamma\ve j}(0)\leqslant C
\end{equation}
for some uniform constant $C$.

On the other hand, since $\varphi_0$ is only a $\omega$-psh function with zero Lelong number, there is no uniform lower bound for $\varphi_j$ and so for $\phi_{\gamma\ve j}(0)$.

\begin{lem}\label{0228001}
There exists a uniform constant $C$ depending only on $\gamma$, $T$, $\theta$, $\eta$ and $\omega$ such that
\begin{equation}\label{0228002}
\phi_{\gamma\ve j}(t)\leqslant\sup\limits_X\phi_{\gamma\ve j}(t_0)+C(t-t_0)
\end{equation}
for all $t_0\in[0,T]$, $j\geqslant1$, $\ve\in(0,1)$ and $t\in[t_0,T]$.
\end{lem}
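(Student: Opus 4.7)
The strategy is to apply the parabolic maximum principle to the auxiliary function
\[
H(t,x) = \phi_{\gamma\ve j}(t,x) - \sup_X \phi_{\gamma\ve j}(t_0,\cdot) - C(t-t_0)
\]
on the slab $[t_0,T]\times X$, for a uniform constant $C$ to be chosen. Since each approximating flow \eqref{TC1} is a smooth parabolic Monge-Amp\`ere equation with smooth initial datum $\varphi_j-k\chi_{\gamma\ve}$, the potential $\phi_{\gamma\ve j}$ is smooth on $[0,T]\times X$, so the classical maximum principle applies without additional regularity arguments.

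The plan is to let $(t_1,x_1)$ be a maximum point of $H$. If $t_1=t_0$, the definition of the supremum gives $H(t_1,x_1)\leqslant 0$ and we are done. Otherwise $t_1>t_0$, and at this interior-in-time spatial maximum one has $\partial_t H(t_1,x_1)\geqslant 0$ and $\sqrt{-1}\partial\bar\partial H(t_1,\cdot)(x_1)\leqslant 0$. The first inequality yields
\[
\frac{\partial \phi_{\gamma\ve j}}{\partial t}(t_1,x_1)\geqslant C,
\]
while the second, together with the equation \eqref{TC1}, gives
\[
\frac{\partial \phi_{\gamma\ve j}}{\partial t}(t_1,x_1)
=\log\frac{\left(\omega_{\gamma t_1\ve}+\sqrt{-1}\partial\bar\partial\phi_{\gamma\ve j}\right)^n}{\omega_{\gamma\ve}^n}(x_1)+F_{\gamma\ve}(x_1)
\leqslant \log\frac{\omega_{\gamma t_1\ve}^n}{\omega_{\gamma\ve}^n}(x_1)+F_{\gamma\ve}(x_1).
\]
At this stage the two uniform bounds already available in the preliminaries close the argument: \eqref{0227005} controls the ratio of volume forms by $\omega_{\gamma t\ve}^n\leqslant C_1^n\omega_{\gamma\ve}^n$ for a constant $C_1$ depending only on $\gamma,T,\theta,\eta,\omega$, and \eqref{0227006} gives $|F_{\gamma\ve}|\leqslant C_2$ uniformly in $\ve$. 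Hence
\[
\frac{\partial\phi_{\gamma\ve j}}{\partial t}(t_1,x_1)\leqslant n\log C_1+C_2.
\]
Choosing $C:=n\log C_1+C_2+1$ (which depends only on $\gamma,T,\theta,\eta,\omega$) produces the contradiction $C\leqslant C-1$, so the case $t_1>t_0$ cannot occur and $H\leqslant 0$ throughout $[t_0,T]\times X$, which is exactly \eqref{0228002}.

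There is no real obstacle here; the only points that need care are (i) verifying that the inequality of positive $(1,1)$-forms $\omega_{\gamma t_1\ve}+\sqrt{-1}\partial\bar\partial\phi_{\gamma\ve j}\leqslant \omega_{\gamma t_1\ve}$ passes to $n$-th powers (valid because both sides are semipositive real $(1,1)$-forms and the left-hand side is in fact positive since $\phi_{\gamma\ve j}$ solves the parabolic Monge-Amp\`ere equation), and (ii) checking that the constants $C_1,C_2$ are indeed independent of $j$ and $\ve$, which is precisely the content of \eqref{0227005} and \eqref{0227006}. The constant $C$ produced this way depends only on the data $\gamma,T,\theta,\eta,\omega$ listed in the statement, so the lemma follows.
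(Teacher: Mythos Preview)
Your proof is correct and follows essentially the same approach as the paper. The paper constructs $\psi_1(t)=\sup_X\phi_{\gamma\ve j}(t_0)+C(t-t_0)$ as a super-solution to \eqref{TC1} (using \eqref{0227005} and \eqref{0227006} to bound $\log(\omega_{\gamma t\ve}^n/\omega_{\gamma\ve}^n)+F_{\gamma\ve}\leqslant C$) and invokes the comparison/maximum principle; you apply the maximum principle directly to the difference $H=\phi_{\gamma\ve j}-\psi_1$, which is the same argument in slightly different packaging.
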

\begin{rem}\label{0228123}
If we choose $t_0=0$, by \eqref{02280002} and Lemma \ref{0228001}, there exists a uniform constant $C$ depending only on $\gamma$, $T$, $\theta$, $\eta$ and $\omega$ such that
\begin{equation}\label{0228002}
\phi_{\gamma\ve j}(t)\leqslant C
\end{equation}
for all $j\geqslant1$, $\ve\in(0,1)$ and $t\in[0,T]$.
\end{rem}
\begin{proof}
Let $\psi_1(t)=\sup\limits_X\phi_{\gamma\ve j}(t_0)+C(t-t_0)$ for some uniform constant $C$. By \eqref{0227005} and \eqref{0227006}, there exist uniform constants $C$ such that
\begin{equation}\label{0228003}
\omega_{\gamma t\ve}+\dd\psi_1(t)=\omega_{\gamma t\ve}\leqslant C\omega_{\gamma\ve},
\end{equation}
and hence
\begin{equation}\label{0228004}
\log\frac{\left(\omega_{\gamma t\ve}+\dd\psi_1(t)\right)^n}{\omega_{\gamma\ve}^n}+F_{\gamma\ve}\leqslant C=\frac{\partial \psi_1(t)}{\partial t}.
\end{equation}
Combining that $\psi_1(t_0)\geqslant\phi_{\gamma\ve j}(t_0)$, by maximum principle, we have
\begin{equation}\label{0228005}
\phi_{\gamma\ve j}(t)\leqslant\psi_1(t)=\sup\limits_X\phi_{\gamma\ve j}(t_0)+C(t-t_0)
\end{equation}
for all $t_0\in[0,T]$, $j\geqslant1$, $\ve\in(0,1)$ and $t\in[t_0,T]$.
\end{proof}
\begin{lem}\label{0228006}
There exists a uniform constant $C$ depending only on $\gamma$, $T$, $\theta$, $\eta$ and $\omega$ such that
\begin{equation}\label{0228007}
\phi_{\gamma\ve j}(t)\geqslant\inf\limits_X\phi_{\gamma\ve j}(t_0)-C(t-t_0)
\end{equation}
for all $t_0\in[0,T]$, $j\geqslant1$, $\ve\in(0,1)$ and $t\in[t_0,T]$.
\end{lem}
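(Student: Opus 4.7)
The plan is to mirror the super-solution construction in the proof of Lemma \ref{0228001} by producing a spatially-constant sub-solution of the form $\psi_2(t) = \inf_X \phi_{\gamma\ve j}(t_0) - C(t-t_0)$ for a uniform constant $C$, and then comparing with $\phi_{\gamma\ve j}$ via the parabolic minimum principle applied to the equation \eqref{TC1}.

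The key observation is that every ingredient in the upper-bound proof has a uniform two-sided counterpart. Since $\psi_2$ is constant in space, $\omega_{\gamma t\ve}+\dd\psi_2(t)=\omega_{\gamma t\ve}$, and the lower half of \eqref{0227005} yields $(\omega_{\gamma t\ve}+\dd\psi_2(t))^n \geqslant C^{-n}\omega_{\gamma\ve}^n$. Combining this with the uniform lower bound $F_{\gamma\ve}\geqslant -C$ in \eqref{0227006}, one obtains
\begin{equation*}
\log\frac{(\omega_{\gamma t\ve}+\dd\psi_2(t))^n}{\omega_{\gamma\ve}^n}+F_{\gamma\ve} \geqslant -C'
\end{equation*}
for a uniform constant $C'$ depending only on $\gamma, T, \theta, \eta$ and $\omega$. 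Choosing $C:=C'$ forces
\begin{equation*}
\frac{\partial \psi_2}{\partial t}=-C \leqslant \log\frac{(\omega_{\gamma t\ve}+\dd\psi_2(t))^n}{\omega_{\gamma\ve}^n}+F_{\gamma\ve},
\end{equation*}
so $\psi_2$ is a sub-solution to the parabolic Monge-Amp\`ere equation in \eqref{TC1}.

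Since $\psi_2(t_0)=\inf_X \phi_{\gamma\ve j}(t_0)\leqslant \phi_{\gamma\ve j}(t_0)$ pointwise on $X$, the parabolic minimum principle applied to $\phi_{\gamma\ve j}-\psi_2$ on the compact cylinder $X\times[t_0,T]$ yields $\phi_{\gamma\ve j}(t)\geqslant \psi_2(t)=\inf_X\phi_{\gamma\ve j}(t_0)-C(t-t_0)$ for every $t\in[t_0,T]$, which is exactly the desired estimate. At a putative negative interior minimum $(t_*,x_*)$ of $\phi_{\gamma\ve j}-\psi_2$ one has $\dd\phi_{\gamma\ve j}\geqslant 0$ and hence $\omega_{\gamma t\ve}+\dd\phi_{\gamma\ve j}\geqslant \omega_{\gamma t\ve}$, so the monotonicity of $\log\det$ together with the sub-solution inequality above produces the standard contradiction with $\partial_t(\phi_{\gamma\ve j}-\psi_2)\leqslant 0$ (perturbing $\psi_2$ by an extra $-\ve_1(t-t_0)$ and letting $\ve_1\to 0$ if one prefers strict inequalities).

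There is no serious obstacle here: the entire argument is dual to that of Lemma \ref{0228001}, and uniformity in $j$ and $\ve$ is automatic because the constants in \eqref{0227005} and \eqref{0227006}, the only inputs used, are already uniform. The reason this lemma is stated separately is precisely that—unlike the upper bound, which combines with \eqref{02280002} to give a pointwise upper bound on $\phi_{\gamma\ve j}(t)$—the initial infimum $\inf_X \phi_{\gamma\ve j}(0)$ is not uniformly controlled from below since $\varphi_0$ only has zero Lelong number, so one must retain the time-$t_0$ infimum explicitly on the right-hand side and exploit it later through a smoothing effect.
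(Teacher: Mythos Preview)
Your proof is correct and follows essentially the same approach as the paper: both construct the spatially constant subsolution $\psi_2(t)=\inf_X\phi_{\gamma\ve j}(t_0)-C(t-t_0)$, invoke the lower halves of \eqref{0227005} and \eqref{0227006} to verify the subsolution inequality, and conclude by the maximum principle. Your write-up adds a bit more detail on the minimum-principle step and some contextual commentary, but the argument is identical in substance.
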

\begin{rem}\label{0228008}
Since there is no uniform lower bound for $\phi_{\gamma\ve j}(0)$, we always consider the lower bound of $\phi_{\gamma\ve j}(t)$ on $[t_0,T]$ with positive $t_0$.
\end{rem}
\begin{proof}
Fix $t_0\in[0,T]$. Let $\psi_2(t)=\inf\limits_X\phi_{\gamma\ve j}(t_0)-C(t-t_0)$ for some constant $C$. Then by \eqref{0227005} and \eqref{0227006}, there exists a uniform constant $C$ independent of $j$, $\ve$ and $t_0$ such that
\begin{equation}\label{0228009}
\log\frac{(\omega_{\gamma t\ve}+\dd\psi_2(t))^n}{\omega_{\gamma\ve}^n}+F_{\gamma\ve}=\log\frac{\omega_{\gamma t\ve}^n}{\omega_{\gamma\ve}^n}+F_{\gamma\ve}\geqslant-C=\frac{\partial\psi_2(t)}{\partial t}.
\end{equation}
Since $\psi_2(t_0)=\inf\limits_X\phi_{\beta,\varepsilon, j}(t_0)\leq \phi_{\gamma\ve j}(t_0)$, by maximum principle, we have
\begin{equation}\label{0228010}
\phi_{\gamma\ve j}(t)\geqslant\psi_2(t)=\inf\limits_X\phi_{\gamma\ve j}(t_0)+C\left(t-t_0\right)
\end{equation}
for all $j\geqslant1$, $\ve\in(0,1)$, $t_0\in[0,T]$ and $t\in[t_0,T]$.
\end{proof}
Next, we show a upper bound for $\dot{\varphi}_{\gamma\ve j}(t):=\frac{\partial \varphi_{\gamma\ve j}(t)}{\partial t}$, which is important for deriving the uniform estimates for the oscillation of $\varphi_{\gamma\ve j}(t)$. The idea in this process follows from the one in \cite{VGAZ2}. Our contribution is to prove the estimates does not depend on $\ve$ and $j$.
\begin{lem}\label{0228011}
There exists a uniform constant $C$ depending only on $\sup\limits_X\varphi_0$, $\gamma$, $T$, $\theta$, $\eta$ and $\omega$ such that 
\begin{equation}\label{0228012}
\dot{\varphi}_{\gamma\ve j}(t)\leqslant-\frac{\varphi_0}{t}+\frac{C}{t}
\end{equation}
for all $j\geqslant1$, $\ve\in(0,1)$ and $t\in(0,T]$.
\end{lem}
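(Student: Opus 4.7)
The plan is to apply the parabolic maximum principle to the auxiliary function
$$H(t,x):=t\dot{\varphi}_{\gamma\ve j}(t,x)+\varphi_j(x)-\varphi_{\gamma\ve j}(t,x)-nt,$$
which is a variant of the test functions used by Guedj-Zeriahi in \cite{VGAZ2}. The $-nt$ correction will be the decisive piece: it is chosen precisely so that $\dl H$ equals $-\tr_{\omega_{\gamma\ve j}(t)}\omega_j$, which is strictly negative regardless of how badly $\omega_j$ degenerates as $j\to\infty$. This is what delivers an estimate uniform in $j$ and $\ve$.

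Concretely, I would first differentiate the Monge-Amp\`ere equation \eqref{TC} in $t$ to obtain
$$\ddot{\varphi}_{\gamma\ve j}-\Delta_{\omega_{\gamma\ve j}(t)}\dot{\varphi}_{\gamma\ve j}=\tr_{\omega_{\gamma\ve j}(t)}\nu_\gamma.$$
Combining this with $\Delta_{\omega_{\gamma\ve j}(t)}\varphi_{\gamma\ve j}(t)=n-\tr_{\omega_{\gamma\ve j}(t)}\omega_{\gamma t}$ and $\Delta_{\omega_{\gamma\ve j}(t)}\varphi_j=\tr_{\omega_{\gamma\ve j}(t)}(\omega_j-\omega)$, together with the definition $\omega_{\gamma t}=\omega+t\nu_\gamma$ from \eqref{0219004}, a direct computation gives
$$\dl H=-\tr_{\omega_{\gamma\ve j}(t)}\omega_j<0,$$
because $\omega_j$ is a K\"ahler metric. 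At $t=0$ the initial data satisfies $\varphi_{\gamma\ve j}(0)=\varphi_j$, so $H(0,\cdot)\equiv 0$.

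By the parabolic maximum principle the supremum of $H$ on $[0,T]\times X$ is then attained at $t=0$, yielding
$$t\dot{\varphi}_{\gamma\ve j}(t)\leqslant\varphi_{\gamma\ve j}(t)-\varphi_j+nt.$$
To finish I would apply Lemma \ref{0228001} to $\phi_{\gamma\ve j}=\varphi_{\gamma\ve j}-k\chi_{\gamma\ve}$ and use the uniform bound $0\leqslant\chi_{\gamma\ve}\leqslant C$ from \eqref{02270021} to obtain $\varphi_{\gamma\ve j}(t)\leqslant C_1$ uniformly in $\ve,j$. Since $\varphi_j\searrow\varphi_0$ as $j\nearrow\infty$, one has $-\varphi_j\leqslant-\varphi_0$, so that $t\dot{\varphi}_{\gamma\ve j}(t)\leqslant C_1+nT-\varphi_0$, which after dividing by $t$ yields the claimed estimate with a constant depending only on $\sup_X\varphi_0$, $\gamma$, $T$, $\theta$, $\eta$ and $\omega$.

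The main obstacle is simply the design of the test function: the specific coefficient $n$ in the $-nt$ term is needed to cancel exactly the trace term $n$ arising in the computation of $\dl H$, leaving behind only $-\tr_{\omega_{\gamma\ve j}(t)}\omega_j$, which has both the right sign and no quantitative dependence on $\omega_j$ or $\chi_{\gamma\ve}$. With any smaller constant one would be left with an uncontrolled positive term, and with any larger constant the rate of blow-up at $t=0$ would be worse than $C/t$. Once the right function is identified, everything else is a routine parabolic maximum principle argument.
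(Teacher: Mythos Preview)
Your proposal is correct and follows essentially the same route as the paper: the same auxiliary function $H=t\dot{\varphi}_{\gamma\ve j}-(\varphi_{\gamma\ve j}-\varphi_j)-nt$, the same computation $\dl H=-\tr_{\omega_{\gamma\ve j}(t)}\omega_j\leqslant0$, the same initial condition $H(0)=0$, and the same conclusion via Remark \ref{0228123} (which is your Lemma \ref{0228001} with $t_0=0$), \eqref{02270021} and $\varphi_j\geqslant\varphi_0$.
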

\begin{proof}
Let $H(t,x)=t\dot{\varphi}_{\gamma\ve j}(t)-(\varphi_{\gamma\ve j}(t)-\varphi_{j})-nt$. Direct computations show that
\begin{equation}\label{0228013}
\frac{\partial}{\partial t} \dot{\varphi}_{\gamma\ve j}(t)=\tr_{\omega_{\gamma\ve j}(t)}\left(\nu_{\gamma}+\dd\dot{\varphi}_{\gamma\ve j}(t)\right),
\end{equation}
and hence
\begin{equation}\label{0228014}
\frac{\partial H}{\partial t}=t\ddot{\varphi}_{\gamma\ve j}(t)-n=t(\tr_{\omega_{\gamma\ve j}(t)}\nu_\gamma+\Delta_{\omega_{\gamma\ve j}(t)}\dot{\varphi}_{\gamma\ve j}(t))-n.
\end{equation}
On the other hand, we have
\begin{equation}\label{0228015}
\begin{aligned}
\Delta_{\omega_{\gamma\ve j}(t)} H&=t\Delta_{\omega_{\gamma\ve j}(t)} \dot{\varphi}_{\gamma\ve j}(t)-\Delta_{\omega_{\gamma\ve j}(t)} \varphi_{\gamma\ve j}(t)+\Delta_{\omega_{\gamma\ve j}(t)} \varphi_{j}\\
&=t\Delta_{\omega_{\gamma\ve j}(t)} \dot{\varphi}_{\gamma\ve j}(t)-n+\tr_{\omega_{\gamma\ve j}(t)}\omega_{\gamma t}+\Delta_{\omega_{\gamma\ve j}(t)} \varphi_{j}\\
&=t(\tr_{\omega_{\gamma\ve j}(t)}\nu_\gamma+\Delta_{\omega_{\gamma\ve j}(t)}\dot{\varphi}_{\gamma\ve j}(t))-n+\tr_{\omega_{\gamma\ve j}(t)}\omega_j.
\end{aligned}
\end{equation}
Then
\begin{equation}\label{0228016}
\dl H=-\tr_{\omega_{\gamma\ve j}(t)}\omega_j\leqslant0.
\end{equation}
By maximum principle, we have $H(t)\leqslant H(0)=0$. Therefore,
\begin{equation}\label{0228017}
\begin{aligned}
\dot{\varphi}_{\gamma\ve j}(t)&\leqslant \frac{\varphi_{\gamma\ve j}(t)-\varphi_{j}}{t}+n\\
&= \frac{\phi_{\gamma\ve j}(t)+k\chi_{\gamma\varepsilon}-\varphi_{j}}{t}+n\\
&\leqslant -\frac{\varphi_{0}}{t}+\frac{C}{t},
\end{aligned}
\end{equation}
for all $j\geqslant1$, $\ve\in(0,1)$ and $t\in(0,T]$, where we use \eqref{02270021}, \eqref{02280001} and Remark \ref{0228123}.
\end{proof}
\begin{rem}\label{022}
There exists a uniform constant $C$ depending only on $\gamma$, $T$, $\theta$, $\eta$ and $\omega$ such that
\begin{equation}
\dot{\phi}_{\gamma\ve j}(t)=\dot{\varphi}_{\gamma\ve j}(t)\leqslant \frac{\osc\limits_X\varphi_{\gamma\ve j}(t_0)+C}{t-t_0}\leqslant \frac{\osc\limits_X\phi_{\gamma\ve j}(t_0)+C}{t-t_0}
\end{equation}
for all $t_0\in[0,T]$, $j\geqslant1$, $\ve\in(0,1)$ and $t\in(t_0,T]$.

In fact, by similar arguments as in the proof of Lemma \ref{0228011}, we can also conclude that 
 \begin{equation}\label{023}
\dot{\varphi}_{\gamma\ve j}(t)\leqslant \frac{\varphi_{\gamma\ve j}(t)-\varphi_{\gamma\ve j}(t_0)}{t-t_0}+n
\end{equation}
for all $t_0\in[0,T]$, $j\geqslant1$, $\ve\in(0,1)$ and $t\in(t_0,T]$.

On the other hand, from Lemma \ref{0228001}, there exists a uniform constant $C$ depending only on $\gamma$, $T$, $\theta$, $\eta$ and $\omega$ such that
\begin{equation}\label{024}
\begin{aligned}
\varphi_{\gamma\ve j}(t)&=\phi_{\gamma\ve j}(t)+k\chi_{\gamma\ve}\\
&\leqslant\sup\limits_X\phi_{\gamma\ve j}(t_0)+C(t-t_0)+k\chi_{\gamma\ve}\\
&\leqslant\sup\limits_X\varphi_{\gamma\ve j}(t_0)+C
\end{aligned}
\end{equation}
for all $t_0\in[0,T]$, $j\geqslant1$, $\ve\in(0,1)$ and $t\in[t_0,T]$, where we use the fact that $\chi_{\gamma\ve}$ is non-negative in the last inequality. Therefore, by using \eqref{02270021}, we have
\begin{equation}\label{025}
\dot{\phi}_{\gamma\ve j}(t)=\dot{\varphi}_{\gamma\ve j}(t)\leqslant \frac{\osc\limits_X\varphi_{\gamma\ve j}(t_0)+C}{t-t_0}\leqslant \frac{\osc\limits_X\phi_{\gamma\ve j}(t_0)+C}{t-t_0}
\end{equation}
for all $t_0\in[0,T]$, $j\geqslant1$, $\ve\in(0,1)$ and $t\in(t_0,T]$.
 \end{rem}
Now we recall the integrability index. The integrability index of $\varphi_0$ at some point $x\in X$ is
\begin{equation}\label{0228018}
c(\varphi_0,x):=\sup\left\{c>0|e^{-2c\varphi_0}\in L^1(V_x)\right\},
\end{equation}
where $V_x$ denotes an arbitrary neighborhood of $x$. Denote
\begin{equation}\label{0228019}
c(\varphi_0)=\inf\left\{c(\varphi_0,x)| x\in X\right\}
\end{equation}
being the uniform integrability index of $\varphi_0$. It follows from Skoda's integrability theorem (see \cite{Sko72} for Skoda's theorem and \cite{Zer01} for a uniform version) that
\begin{equation}\label{0228020}
\frac{1}{\nu(\varphi_0,x)}\leq c(\varphi_0,x)\leq\frac{n}{\nu(\varphi_0,x)},
\end{equation}
where $\nu(\varphi_0,x)$ is the Lelong number of $\varphi_0$ at $x$. Thus $c(\varphi_0)=+\infty$ if and only if $\varphi_0$ has zero Lelong number at all points. 
	
Observe that $\varphi_{\gamma\ve j}(t)$ is a family of $\omega_{\gamma t}$-psh functions such that
\begin{equation}\label{0228021}
(\omega_{\gamma t}+\dd\varphi_{\gamma\ve j}(t))^n=e^{\dot{\varphi}_{\gamma\ve j}(t)-h_\gamma}\frac{\omega^n}{(\varepsilon^2+\vert s\vert_h^2)^{1-\gamma}}.
\end{equation}
By using Lemma \ref{0228011}, we have
\begin{equation}\label{0228022}
(\omega_{\gamma t}+\dd\varphi_{\gamma\ve j}(t))^n\leqslant e^{-\frac{\varphi_{0}}{t}+\frac{C}{t}}\frac{\omega^n}{(\varepsilon^2+\vert s\vert_h^2)^{1-\gamma}}.
\end{equation}
Since $\varphi_0$ has zero lelong number at all points, for each fixed $t>0$, the right hand side of \eqref{0228021} is uniformly bounded in $L^p$-sense for some $p>1$. Then the oscillation of $\varphi_{\gamma\ve j}(t)$ is uniformly bounded, which follows from Ko{\l}odziej's $L^p$-estimates \cite{K000} (see also \cite{EGZ}).
\begin{thm}\label{0228023}
For each $t>0$, there exists a uniform constant $M(t)>0$ depending only on $\sup\limits_X\varphi_0$, $\gamma$, $T$, $\theta$, $\eta$ and $\omega$ such that  such that for all $j\geqslant1$ and $\ve\in(0,1)$,
\begin{equation}\label{0228024}
\osc\limits_X\varphi_{\gamma\ve j}(t)\leqslant M(t).
\end{equation}
\end{thm}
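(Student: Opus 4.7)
The plan is to apply Ko{\l}odziej's $L^p$-estimate to the Monge--Amp\`ere equation \eqref{0228021} after using Lemma \ref{0228011} to convert the pointwise bound on $\dot{\varphi}_{\gamma\ve j}(t)$ into $L^p$ control on the right-hand side. I would write \eqref{0228021} in the form $(\omega_{\gamma t}+\dd\varphi_{\gamma\ve j}(t))^n=F_{t,\ve,j}\,\omega^n$ with $F_{t,\ve,j}=e^{\dot\varphi_{\gamma\ve j}(t)-h_\gamma}(\ve^2+|s|_h^2)^{-(1-\gamma)}$. By Lemma \ref{0228011} and the smoothness of $h_\gamma$, for each fixed $t\in(0,T]$ one has the pointwise domination
\[
F_{t,\ve,j}\leqslant C(t)\,e^{-\varphi_0/t}\,(\ve^2+|s|_h^2)^{-(1-\gamma)},
\]
where $C(t)$ depends only on $t$, $\sup_X\varphi_0$, $\gamma$, $T$, $\theta$, $\eta$, and $\omega$.

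The crux is a uniform (in $\ve,j$) $L^p$-bound on $F_{t,\ve,j}$. I would pick $p>1$ so small that $p(1-\gamma)<1$. Then $(\ve^2+|s|_h^2)^{-p(1-\gamma)}\leqslant |s|_h^{-2p(1-\gamma)}$ $\ve$-uniformly, and the right-hand side is in $L^1(\omega^n)$ by codimension-one integrability (since $2p(1-\gamma)<2$). For the exponential factor, the zero Lelong number hypothesis on $\hat\omega$ forces $c(\varphi_0)=+\infty$ through \eqref{0228020}; combined with the compactness of $X$ and a finite cover by the local Skoda neighborhoods (and the uniform version from \cite{Zer01}), this yields $e^{-a\varphi_0}\in L^1(\omega^n)$ for every $a>0$. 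Splitting by H\"older with conjugate exponents $(q_1,q_2)$, where $q_2>1$ is chosen close enough to $1$ that $pq_2(1-\gamma)<1$ is preserved, gives
\[
\int_X F_{t,\ve,j}^p\,\omega^n\leqslant C(t)^p\Big(\int_X e^{-pq_1\varphi_0/t}\omega^n\Big)^{1/q_1}\Big(\int_X |s|_h^{-2pq_2(1-\gamma)}\omega^n\Big)^{1/q_2},
\]
and both factors are finite and independent of $\ve$ and $j$. Hence $\|F_{t,\ve,j}\|_{L^p(\omega^n)}\leqslant K(t)$ uniformly.

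It then remains to invoke Ko{\l}odziej's $L^p$-estimate \cite{K000} (see also \cite{EGZ}): by \eqref{02190071} the metric $\omega_{\gamma t}$ is uniformly comparable to $\omega$ on $[0,T]$ and has total mass continuous in $t$, so Ko{\l}odziej's theorem applied to $(\omega_{\gamma t}+\dd\varphi_{\gamma\ve j}(t))^n=F_{t,\ve,j}\omega^n$ yields $\osc_X\varphi_{\gamma\ve j}(t)\leqslant M(t)$, with $M(t)$ depending only on $K(t)$, $p$, and the background geometry, hence only on the listed quantities. The main obstacle is precisely the preceding $L^p$ step: both the $\ve$-uniformity of the singular divisor factor and the finiteness of the $\varphi_0$-exponential integral must be verified independently of $j$ and $\ve$, and it is the zero Lelong number assumption, together with a uniform Skoda-type integrability statement, that makes the latter possible.
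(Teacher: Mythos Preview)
Your proposal is correct and follows essentially the same approach as the paper: derive the pointwise bound \eqref{0228022} from Lemma \ref{0228011}, establish a uniform $L^p$ bound on the density for some $p>1$ using the zero Lelong number hypothesis, and then invoke Ko{\l}odziej's estimate. The paper merely asserts the $L^p$ bound in one sentence, whereas you spell out the H\"older splitting between the $e^{-\varphi_0/t}$ factor (handled by Skoda-type integrability, cf.\ \eqref{0228020}) and the divisor factor $|s|_h^{-2(1-\gamma)}$; this is exactly the argument the paper is leaving implicit.
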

\begin{rem}\label{0228025}
Combining with \eqref{02270021}, it is obvious that for each $t>0$, there exists a uniform constant $M(t)>0$ depending only on $\sup\limits_X\varphi_0$, $\gamma$, $T$, $\theta$, $\eta$ and $\omega$ such that 
\begin{equation}\label{0228026}
\osc\limits_X\phi_{\gamma\ve j}(t)\leqslant M(t)
\end{equation}
for all $j\geqslant1$ and $\ve\in(0,1)$.
\end{rem}
Next, we give a lower bound for $\dot{\phi}_{\gamma\ve j}(t)$ by using the arguments in \cite{JWLXZ1, JSGT}.
\begin{pro}\label{0228027}
For any $0<t_0<T'<T$, there exists a uniform constant $C$ depending only on $\sup\limits_X\varphi_0$, $\gamma$, $T'$, $T$, $\theta$, $\eta$ and $\omega$ such that
\begin{equation}\label{0228018}
\dot{\phi}_{\gamma\ve j}(t)\geqslant n\log(t-t_0)-A\osc\limits_X\phi_{\gamma\ve j}(t_0)-C
\end{equation}
for all $j\geqslant1$, $\ve\in(0,1)$ and $t\in(t_0,T']$, where $A$ is a uniform constant satisfying $A>\frac{1}{T-T'}$.
\end{pro}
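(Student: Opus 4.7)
The plan is to apply the parabolic minimum principle to an auxiliary function whose shape lets an AM--GM exponential cancel the \(-n/(t-t_0)\) singularity. Following the strategy of Song--Tian \cite{JSGT} and the authors \cite{JWLXZ1}, I would consider
\begin{equation*}
H(t,x) = \dot\phi_{\gamma\ve j}(t,x) - n\log(t-t_0) + A\,\phi_{\gamma\ve j}(t,x)
\end{equation*}
on \((t_0, T']\times X\). The hypothesis \(A > 1/(T-T')\) is used precisely to ensure
\begin{equation*}
\nu_\gamma + A\omega_{\gamma t\ve} = A\bigl(\omega_{\gamma t\ve} + \tfrac{1}{A}\nu_\gamma\bigr) = A\,\omega_{\gamma(t+1/A)\ve} > 0
\end{equation*}
for every \(t\in[t_0,T']\), since \(t+1/A < T\) and \(\omega_{\gamma s\ve}>0\) for \(s\in[0,T]\); this yields a positive \((1,1)\)-form to feed into AM--GM.

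Differentiating equation \eqref{TC1} in time yields \(\ddot\phi_{\gamma\ve j}-\Delta_{\omega_{\gamma\ve j}(t)}\dot\phi_{\gamma\ve j}=\tr_{\omega_{\gamma\ve j}(t)}\nu_\gamma\), and using \(\Delta_{\omega_{\gamma\ve j}(t)}\phi_{\gamma\ve j}(t)=n-\tr_{\omega_{\gamma\ve j}(t)}\omega_{\gamma t\ve}\) a direct calculation gives
\begin{equation*}
\dl H = \tr_{\omega_{\gamma\ve j}(t)}\bigl(\nu_\gamma+A\omega_{\gamma t\ve}\bigr) + A\dot\phi_{\gamma\ve j}(t) - \frac{n}{t-t_0} - An.
\end{equation*}
Applying AM--GM to \(A\omega_{\gamma(t+1/A)\ve}\) against \(\omega_{\gamma\ve j}(t)\), and invoking the uniform two-sided comparison \eqref{0227005}, the \(L^\infty\) bound on \(F_{\gamma\ve}\) from \eqref{0227006}, and the Monge--Amp\`ere identity \(\omega_{\gamma\ve j}(t)^n=e^{\dot\phi_{\gamma\ve j}(t)-F_{\gamma\ve}}\omega_{\gamma\ve}^n\), one obtains a uniform \(c_0>0\) with
\begin{equation*}
\tr_{\omega_{\gamma\ve j}(t)}\bigl(\nu_\gamma+A\omega_{\gamma t\ve}\bigr) \geq Anc_0\, e^{-\dot\phi_{\gamma\ve j}(t)/n}.
\end{equation*}

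Since \(\phi_{\gamma\ve j}\) is uniformly bounded by Lemmas \ref{0228001} and \ref{0228006} while \(-n\log(t-t_0)\to+\infty\) as \(t\to t_0^+\), the infimum of \(H\) on \((t_0,T']\times X\) is attained at some \((t^*,x^*)\) with \(t^*>t_0\), where \(\dl H\leq 0\). Substituting the AM--GM bound yields
\begin{equation*}
Anc_0\,e^{-\dot\phi_{\gamma\ve j}(t^*)/n} + A\dot\phi_{\gamma\ve j}(t^*) \leq \frac{n}{t^*-t_0} + An,
\end{equation*}
and writing \(\dot\phi_{\gamma\ve j}(t^*)=n\log(t^*-t_0)+\mu\), the inequality becomes \(Anc_0 e^{-\mu/n}\leq n + O(t^*-t_0)\), forcing \(\mu\geq -C_1\) uniformly and hence \(\dot\phi_{\gamma\ve j}(t^*)\geq n\log(t^*-t_0)-C_1\). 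Combining \(H(t,x)\geq H(t^*,x^*)\) at an arbitrary \((t,x)\) with \(\phi_{\gamma\ve j}(t,x)\leq\sup_X\phi_{\gamma\ve j}(t_0)+C(t-t_0)\) from Lemma \ref{0228001} and \(\phi_{\gamma\ve j}(t^*,x^*)\geq\inf_X\phi_{\gamma\ve j}(t_0)-CT\) from Lemma \ref{0228006} produces the claimed bound. The main obstacle is engineering the auxiliary function so that both the positivity \(\nu_\gamma + A\omega_{\gamma t\ve}>0\) and the quantitative balance in the AM--GM step go through uniformly in \(\ve\) and \(j\); the coefficient \(n\) in \(n\log(t-t_0)\) is sharp because it is the complex dimension entering through the determinant in AM--GM.
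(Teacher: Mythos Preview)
Your proposal is correct and follows essentially the same route as the paper: the same auxiliary function $G=\dot\phi_{\gamma\ve j}+A\phi_{\gamma\ve j}-n\log(t-t_0)$, the same heat-operator computation, the same use of $A>1/(T-T')$ to make $\nu_\gamma+A\omega_{\gamma t\ve}=A\,\omega_{\gamma(t+1/A)\ve}\geqslant \frac{A}{C}\omega_{\gamma\ve}$, the same AM--GM step, and the same closing combination with Lemmas~\ref{0228001} and~\ref{0228006}. One small imprecision: after substituting $\dot\phi_{\gamma\ve j}(t^*)=n\log(t^*-t_0)+\mu$, the resulting inequality is not literally $Anc_0 e^{-\mu/n}\leq n+O(t^*-t_0)$ but rather $Anc_0 e^{-\mu/n}\leq n+(t^*-t_0)\bigl(An-An\log(t^*-t_0)-A\mu\bigr)$, with a linear-in-$\mu$ term on the right; the conclusion $\mu\geq -C_1$ still follows since exponential dominates linear, which is all you need.
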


\begin{proof}
Let $G=\dot{\phi}_{\gamma\ve j}(t)+A\phi_{\gamma\ve j}-n\log(t-t_0)$. Direct computations show that
\begin{equation}\label{0229001}
\begin{aligned}
\dl G&=\ddot{\phi}_{\gamma\ve j}(t)+A\dot{\phi}_{\gamma\ve j}(t)-\frac{n}{t-t_0}-\Delta_{\omega_{\gamma\ve j}(t)}\dot{\phi}_{\gamma\ve j}(t)-A\Delta_{\omega_{\gamma\ve j}(t)}\phi_{\gamma\ve j}(t)\\
&=\tr_{\omega_{\gamma\ve j}(t)}\left(\nu_\gamma+\dd\dot{\phi}_{\gamma\ve j}(t)\right)+A\dot{\phi}_{\gamma\ve j}(t)-\frac{n}{t-t_0}\\
&\ \ \ \ -\Delta_{\omega_{\gamma\ve j}(t)}\dot{\phi}_{\gamma\ve j}(t)-An+A\tr_{\omega_{\gamma\ve j}(t)}\omega_{\gamma t\varepsilon}\\
&=A\dot{\phi}_{\gamma\ve j}(t)+\tr_{\omega_{\gamma\ve j}(t)}(A\omega_{\gamma t\varepsilon}+\nu_\gamma)-\frac{n}{t-t_0}-An.
\end{aligned}
\end{equation}
Choosing a uniform constant $A>\frac{1}{T-T'}$, for $t\in(t_0,T']$, we have $t+\frac{1}{A}<t+T-T'<T$. Hence by \eqref{02270066},
\begin{equation}\label{0229002}
\begin{aligned}
A\omega_{\gamma t\ve}+\nu_\gamma&=A(\omega+(t+\frac{1}{A})\nu_\gamma+k\dd\chi_{\gamma\ve})\\
&=A(\omega_{\gamma (t+\frac{1}{A})}+ k\dd\chi_{\gamma\varepsilon})\\
&\geqslant\frac{A}{C}\omega_{\gamma\varepsilon},
\end{aligned}
\end{equation}
and by using mean value inequality, we have
\begin{equation}\label{0229003}
\tr_{\omega_{\gamma\ve j}(t)}(A\omega_{\gamma t\ve}+\nu_\gamma)\geqslant\frac{A}{C}\tr_{\omega_{\gamma\ve j}(t)}\omega_{\gamma\ve}\geqslant\frac{An}{C}\left(\frac{\omega_{\gamma\ve}^n}{\omega_{\gamma\ve j}^n(t)}\right)^{\frac{1}{n}}.
\end{equation}
Therefore, we conclude that
\begin{equation}\label{0229004}
\begin{aligned}
\dl G&\geqslant A\dot{\phi}_{\gamma\ve j}(t)+\frac{An}{C}\left(\frac{\omega_{\gamma\ve}^n}{\omega_{\gamma\ve j}^n(t)}\right)^{\frac{1}{n}}-\frac{n}{t-t_0}-An\\
&=\frac{An}{C}\left(\frac{\omega_{\gamma\ve}^n}{\omega_{\gamma\ve j}^n(t)}\right)^\frac{1}{n}+A\log\frac{\omega_{\gamma\ve j}^n(t)}{\omega_{\gamma\ve}^n}+AF_{\gamma\ve}-\frac{n}{t-t_0}-An\\
&\geqslant\frac{An}{C}\left(\frac{\omega_{\gamma\ve}^n}{\omega_{\gamma\ve j}^n(t)}\right)^\frac{1}{n}+A\log\frac{\omega_{\gamma\ve j}^n(t)}{\omega_{\gamma\ve}^n}-\frac{C}{t-t_0}.
\end{aligned}
\end{equation}
Let $(t_1,x_1)$ be the minimum point of $G$ on $[t_0, T']\times X$. Since $G$ tends to $+\infty$ as $t\to0^+$, $t_1>t_0$. At $(t_1,x_1)$, if $\frac{\omega_{\gamma\ve}^n}{\omega_{\gamma\ve j}^n(t)}\leqslant1$, we have 
\begin{equation}\label{0229005}
\dot{\phi}_{\gamma\ve j}(t_1,x_1)\geqslant F_{\gamma\varepsilon}(x_1)\geqslant-C,
\end{equation}
where $C$ is the uniform constant in \eqref{0227006}.

Since function $\frac{An}{2C}x-An\log x$ goes to $+\infty$ as $x$ goes to $+\infty$, without loss of generality, we assume that at $(t_1,x_1)$,
\begin{equation}\label{0229006}
\frac{\omega_{\gamma\ve}^n}{\omega_{\gamma\ve j}^n(t)}>1\ \ \ \text{and}\ \ \ \frac{An}{2C}\left(\frac{\omega_{\gamma\ve}^n}{\omega_{\gamma\ve j}^n(t)}\right)^\frac{1}{n}+A\log\frac{\omega_{\gamma\ve j}^n(t)}{\omega_{\gamma\ve}^n}\geqslant0.
\end{equation}
Then at $(t_1,x_1)$, we have
\begin{equation}\label{0229007}
0\geqslant\dl G\geqslant\frac{An}{2C}\left(\frac{\omega_{\gamma\ve}^n}{\omega_{\gamma\ve j}^n(t)}\right)^\frac{1}{n}-\frac{C}{t_1-t_0},
\end{equation}
which implies that 
\begin{equation}\label{0229008}
\dot{\phi}_{\gamma\ve j}(t_1,x_1)\geqslant n\log(t_1-t_0)-C.
\end{equation}
So on $[t_0, T']\times X$, we have
\begin{equation}\label{0229009}
\dot{\phi}_{\gamma\ve j}(t)+A\phi_{\gamma\ve j}(t)-n\log(t-t_0)\geqslant A\phi_{\gamma\ve j}(x_1,t_1)-C.
\end{equation}
Combining  Lemma \ref{0228001} with Lemma \ref{0228006}, we have
\begin{equation}\label{0229010}
\begin{aligned}
 \dot{\phi}_{\gamma\ve j}&\geqslant A\inf\limits_X\phi_{\gamma\ve j}(t_0)-C(t-t_0)-A\sup_X\phi_{\gamma\ve j}(t_0)+n\log(t-t_0)\\
 &\geqslant n\log(t-t_0)-A\osc_X\phi_{\gamma\ve j}(t_0)-C.
\end{aligned}
\end{equation}
We complete the proof.
\end{proof}
Now we prove the uniform Laplacian $C^2$-estimate. 
\begin{pro}\label{0229011}
For any $0<t_0<T'<T$, there exists a uniform constant $C_1$ depending only on $\sup\limits_X\varphi_0$, $\gamma$, $T$, $\theta$, $\eta$ and $\omega$, and a uniform constant $C_2$ depending only on $t_0$, $T'$, $\sup\limits_X\varphi_0$, $\gamma$, $T$, $\theta$, $\eta$ and $\omega$, such that 
\begin{equation}\label{0229012}
(t-t_0)\log\tr_{\omega_{\gamma\ve}}\omega_{\gamma\ve j}(t)\leqslant C_1\osc\limits_X\phi_{\gamma\ve j}(t_0)+C_2
\end{equation}
for all $j\geqslant1$, $\ve\in(0,1)$ and $t\in(t_0,T']$.
\end{pro}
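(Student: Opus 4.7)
The plan is to adapt the parabolic Aubin--Yau/Chern--Lu second-order estimate to the approximating flow \eqref{TC1}, with the multiplicative factor $(t-t_0)$ absorbing the lack of uniform control near $t=t_0$. Three structural properties of the Campana--Guenancia--P\u{a}un regularization make the constants uniform in $j$ and $\varepsilon$: the metric $\omega_{\gamma\ve}$ has holomorphic bisectional curvature bounded below by some $-K$ independent of $\varepsilon$; one has $\theta_\ve\leqslant C\omega_{\gamma\ve}$ uniformly in $\varepsilon$; and $F_{\gamma\ve}$ is uniformly bounded by \eqref{0227006}.

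I would introduce
\[
H(t,x)=(t-t_0)\log\tr_{\omega_{\gamma\ve}}\omega_{\gamma\ve j}(t)-A\phi_{\gamma\ve j}(t)
\]
on $[t_0,T']\times X$ for a uniform constant $A$ to be chosen. The standard Chern--Lu computation yields
\[
\dl\log\tr_{\omega_{\gamma\ve}}\omega_{\gamma\ve j}(t)\leqslant C+K\tr_{\omega_{\gamma\ve j}(t)}\omega_{\gamma\ve},
\]
the bisectional curvature producing the $K$ and the uniform bound $(1-\gamma)\theta_\ve+\eta\leqslant C\omega_{\gamma\ve}$ producing the additive $C$. Combined with $\Delta_{\omega_{\gamma\ve j}(t)}\phi_{\gamma\ve j}=n-\tr_{\omega_{\gamma\ve j}(t)}\omega_{\gamma t\ve}$ and $\omega_{\gamma t\ve}\geqslant C^{-1}\omega_{\gamma\ve}$ from \eqref{0227005}, and choosing $A$ uniformly large so that $A/C-KT\geqslant 1$, this yields
\[
\dl H\leqslant \log\tr_{\omega_{\gamma\ve}}\omega_{\gamma\ve j}(t)-\tr_{\omega_{\gamma\ve j}(t)}\omega_{\gamma\ve}-A\dot\phi_{\gamma\ve j}+C.
\]

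At the maximum point of $H$ on $[t_0,T']\times X$: if it is attained at $t=t_0$, then $H\leqslant -A\inf_X\phi_{\gamma\ve j}(t_0)$ everywhere, and combining with Lemma \ref{0228001} immediately gives $(t-t_0)\log\tr_{\omega_{\gamma\ve}}\omega_{\gamma\ve j}(t)\leqslant A\osc_X\phi_{\gamma\ve j}(t_0)+C$. Otherwise the maximum lies at $(t_1,x_1)$ with $t_1>t_0$, where $\dl H(t_1,x_1)\geqslant 0$ reads
\[
\tr_{\omega_{\gamma\ve j}(t)}\omega_{\gamma\ve}(t_1,x_1)\leqslant \log\tr_{\omega_{\gamma\ve}}\omega_{\gamma\ve j}(t)(t_1,x_1)-A\dot\phi_{\gamma\ve j}(t_1,x_1)+C.
\]
The elementary eigenvalue inequality $\sum\lambda_i\leqslant n\,(\prod\lambda_i)(\sum\lambda_i^{-1})^{n-1}$ together with the Monge--Amp\`ere identity $\omega_{\gamma\ve j}^n(t)/\omega_{\gamma\ve}^n=e^{\dot\phi_{\gamma\ve j}-F_{\gamma\ve}}$ and \eqref{0227006} gives
\[
\log\tr_{\omega_{\gamma\ve}}\omega_{\gamma\ve j}(t)\leqslant (n-1)\log\tr_{\omega_{\gamma\ve j}(t)}\omega_{\gamma\ve}+\dot\phi_{\gamma\ve j}+C,
\]
so that, after absorbing the $(n-1)\log$ term, $\tr_{\omega_{\gamma\ve j}(t)}\omega_{\gamma\ve}(t_1,x_1)\leqslant -(A-1)\dot\phi_{\gamma\ve j}(t_1,x_1)+C$.

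Plugging in the lower bound $\dot\phi_{\gamma\ve j}(t_1,x_1)\geqslant n\log(t_1-t_0)-A'\osc_X\phi_{\gamma\ve j}(t_0)-C$ from Proposition \ref{0228027} gives a bound on $\tr_{\omega_{\gamma\ve j}(t)}\omega_{\gamma\ve}(t_1,x_1)$ by a quantity of the form $-C\log(t_1-t_0)+C\osc_X\phi_{\gamma\ve j}(t_0)+C$, and feeding this together with the upper bound $\dot\phi_{\gamma\ve j}(t_1,x_1)\leqslant (\osc_X\phi_{\gamma\ve j}(t_0)+C)/(t_1-t_0)$ from Remark \ref{022} back into the logarithmic relation yields
\[
(t_1-t_0)\log\tr_{\omega_{\gamma\ve}}\omega_{\gamma\ve j}(t)(t_1,x_1)\leqslant C\osc_X\phi_{\gamma\ve j}(t_0)+C,
\]
where one uses $(t_1-t_0)\log|\log(t_1-t_0)|\to 0$ and $(t_1-t_0)\log(1+\osc_X\phi_{\gamma\ve j}(t_0))\leqslant C+C\osc_X\phi_{\gamma\ve j}(t_0)$. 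Since $H(t,x)\leqslant H(t_1,x_1)$ everywhere and the space-time oscillation of $\phi_{\gamma\ve j}$ on $[t_0,T']\times X$ is bounded by $\osc_X\phi_{\gamma\ve j}(t_0)+C$ via Lemmas \ref{0228001} and \ref{0228006}, the claimed estimate follows. The main difficulty is the careful interplay of the two $\dot\phi_{\gamma\ve j}$ bounds---Remark \ref{022} blows up as $(t-t_0)^{-1}$ while Proposition \ref{0228027} blows down like $\log(t-t_0)$---so that the factor $(t-t_0)$ in $H$ exactly tames these divergences while the dependence on $\osc_X\phi_{\gamma\ve j}(t_0)$ remains linear.
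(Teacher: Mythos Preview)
Your overall strategy---auxiliary function $(t-t_0)\log\tr_{\omega_{\gamma\ve}}\omega_{\gamma\ve j}(t)-A\phi_{\gamma\ve j}$, parabolic Aubin--Yau inequality, then feeding in the lower bound on $\dot\phi_{\gamma\ve j}$ from Proposition~\ref{0228027} and the upper bound from Remark~\ref{022} at the maximum point, and finally closing with Lemmas~\ref{0228001}--\ref{0228006}---is exactly the paper's route. The paper's test function differs only by an additional term $+(t-t_0)\chi_{\gamma\ve}$, and the differential inequality \eqref{0229013} is obtained by citing the computations of \cite{JWLXZ,JWLXZ1}.

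There is, however, a genuine gap in your derivation of the differential inequality. Your claimed uniform bound $(1-\gamma)\theta_\ve+\eta\leqslant C\omega_{\gamma\ve}$ is false. In local coordinates with $D=\{z_n=0\}$ and trivialised $h$, the positive part of $\theta_\ve=\theta+\dd\log(\ve^2+|s|_h^2)$ in the normal direction is $\frac{\ve^2}{(\ve^2+|z_n|^2)^2}\,idz_n\wedge d\bar z_n$, while $\omega_{\gamma\ve}$ there is comparable to $(\ve^2+|z_n|^2)^{\gamma-1}\,idz_n\wedge d\bar z_n$; the ratio at $z_n=0$ is of order $\ve^{-2\gamma}\to\infty$. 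Hence $\tr_{\omega_{\gamma\ve}}\theta_\ve$ is not uniformly bounded, and the twist contribution $\tr_{\omega_{\gamma\ve}}\bigl((1-\gamma)\theta_\ve+\eta\bigr)\big/\tr_{\omega_{\gamma\ve}}\omega_{\gamma\ve j}(t)$ in the parabolic Aubin--Yau inequality does not reduce to an ``additive $C$'' as you assert. Your inequality $\dl\log\tr_{\omega_{\gamma\ve}}\omega_{\gamma\ve j}(t)\leqslant C+K\tr_{\omega_{\gamma\ve j}(t)}\omega_{\gamma\ve}$ therefore does not hold with $\ve$-independent $C$ on the grounds you give.

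The paper avoids this obstacle by inserting the barrier $(t-t_0)\chi_{\gamma\ve}$ into the test function and deferring the resulting computation to \cite{JWLXZ,JWLXZ1}, where the interaction between the curvature of $\omega_{\gamma\ve}$, the twist $\theta_\ve$, and $\dd\chi_{\gamma\ve}$ is worked out so that all singular contributions are absorbed into $C\,\tr_{\omega_{\gamma\ve j}(t)}\omega_{\gamma\ve}+C$ with uniform constants. To repair your argument you should either reproduce that computation (with the $\chi_{\gamma\ve}$ term) or give an alternative uniform control of the $\theta_\ve$ contribution; the rest of your proof then goes through unchanged.
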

\begin{proof}
Set $K(t,x)=(t-t_0)\log\tr_{\omega_{\gamma\ve}}\omega_{\gamma\ve j}(t)+(t-t_0)\chi_{\gamma\varepsilon}-B\phi_{\gamma\ve j}(t)$, where $B$ is a uniform constant to be determined later. By using the computations in \cite{JWLXZ, JWLXZ1}, we have
\begin{equation}\label{0229013}
\begin{aligned}
\dl K&\leqslant\log\tr_{\omega_{\gamma\ve}}\omega_{\gamma\ve j}(t)+C\tr_{\omega_{\gamma\ve j}(t)}\omega_{\gamma\ve}-B\dot{\phi}_{\gamma\ve j}(t)-B\tr_{\omega_{\gamma\ve j}(t)}\omega_{\gamma t\ve}+C\\
&\leqslant(n-1)\log\tr_{\omega_{\gamma\ve j}(t)}\omega_{\gamma\ve}+C\tr_{\omega_{\gamma\ve j}(t)}\omega_{\gamma\ve}-(B-1)\log\frac{\omega_{\gamma\ve j}^n(t)}{\omega_{\gamma\ve}^n}-\frac{B}{C}\tr_{\omega_{\gamma\ve j}(t)}\omega_{\gamma\ve}+C\\
&\leqslant-\frac{B}{2C}\tr_{\omega_{\gamma\ve j}(t)}\omega_{\gamma\ve}+(n-1)\log\tr_{\omega_{\gamma\ve j}(t)}\omega_{\gamma\ve}-(B-1)\log\frac{\omega_{\gamma\ve j}^n(t)}{\omega_{\gamma\ve}^n}+C,
\end{aligned}
\end{equation}
where we choose $B$ large enough in last inequality, and we use inequality 
\begin{equation}\label{02290191}
\tr_{\omega_{\gamma\ve}}\omega_{\gamma\ve j}(t)\leqslant n\left(\frac{\omega^n_{\gamma\ve j}(t)}{\omega_{\gamma\ve}^n}\right)\left(\tr_{\omega_{\gamma\ve j}(t)}\omega_{\gamma\ve}\right)^{n-1},
\end{equation}
in the second inequality. 

Let $(t_1,x_1)$ be the maximum point of $K$ on $[t_0, T']\times X$. If $t_1=t_0$, it is obvious that there holds \eqref{0229012}. If $t_1>t_0$, since function $-\frac{B}{4C}x+(n-1)\log x$ goes to $-\infty$ as $x$ goes to $+\infty$, without loss of generality, we assume that at $(t_1,x_1)$, 
\begin{equation}\label{0229014}
-\frac{B}{4C}\tr_{\omega_{\gamma\ve j}(t)}\omega_{\gamma\ve}+(n-1)\log\tr_{\omega_{\gamma\ve j}(t)}\omega_{\gamma\ve}\leqslant0.
\end{equation}
Then at $(t_1,x_1)$, we have
\begin{equation}\label{0229015}
\dl K\leq-\frac{B}{4C}\tr_{\omega_{\gamma\ve j}(t)}\omega_{\gamma\ve}-(B-1)\dot{\phi}_{\gamma\ve j}(t)+C.
\end{equation}
By maximum principle, at $(t_1,x_1)$, it holds that
\begin{equation}\label{0229016}
\tr_{\omega_{\gamma\ve j}(t)}\omega_{\gamma\ve}\leqslant -C\dot{\phi}_{\gamma\ve j}(t_1)+C.
\end{equation}
By using Proposition \ref{0228027}, at $(t_1,x_1)$, we have
\begin{equation}\label{0229017}
\tr_{\omega_{\gamma\ve j}(t)}\omega_{\gamma\ve}\leqslant-C\log(t-t_0)+C\osc_X\phi_{\gamma\ve j}(t_0)+C.
\end{equation}
Combining \eqref{02290191} with Remark \ref{022}, there holds
\begin{equation}\label{0229018}
\begin{aligned}
\tr_{\omega_{\gamma\ve}}\omega_{\gamma\ve j}(t)&\leqslant Ce^{\dot{\phi}_{\gamma\ve j}(t)}\left(\osc\limits_X\phi_{\gamma\ve j}(t_0)+\log\frac{1}{t-t_0}+1\right)^{n-1}\\
&\leqslant Ce^{\frac{\osc\limits_X\phi_{\gamma\ve j}(t_0)+C}{t-t_0}}\left(\osc\limits_X\phi_{\gamma\ve j}(t_0)+\log\frac{1}{t-t_0}+1\right)^{n-1}.
\end{aligned}
\end{equation}
Therefore, we have
\begin{equation}\label{0229019}
\begin{aligned}
&\ \ \ \ (t-t_0)\log\tr_{\omega_{\gamma\ve}}\omega_{\gamma\ve j}(t)+(t-t_0)\chi_{\beta,\varepsilon}-B\phi_{\gamma\ve j}(t)\\
&\leqslant(n-1)(t_1-t_0)\log\left(\osc\limits_X\phi_{\gamma\ve j}(t_0)+\log\frac{1}{t_1-t_0}+1\right)+\osc\limits_X\phi_{\gamma\ve j}(t_0)-B\phi_{\gamma\ve j}(t_1)+C.
\end{aligned}
\end{equation}
By using Lemma \ref{0228001} and Lemma \ref{0228006}, we get
\begin{equation}\label{0229020}
(t-t_0)\log\tr_{\omega_{\gamma\ve}}\omega_{\gamma\ve j}(t)\leqslant C_1\osc\limits_X\phi_{\gamma\ve j}(t_0)+C_2.
\end{equation}
We complete the proof.
\end{proof}

For any $0<t_0<T<T^\eta_{\gamma,\max}$, combining Remark \ref{0228025}, Proposition \ref{0228027} and Proposition \ref{0229011}, we have
\begin{equation}\label{0229021}
\left(t-\frac{t_0}{2}\right)\log\tr_{\omega_{\gamma\ve}}\omega_{\gamma\ve j}(t)\leqslant C_1\osc\limits_X\phi_{\gamma\ve j}(\frac{t_0}{2})+C_2\ \ \ \text{on}\ \ \ \left[\frac{t_0}{2}, T\right]
\end{equation}
and 
\begin{equation}\label{0229024}
\dot{\phi}_{\gamma\ve j}(t)\geqslant n\log\left(t-\frac{t_0}{2}\right)-A\osc\phi_{\gamma\ve j}\left(\frac{t_0}{2}\right)-C\ \ \ \text{on}\ \ \ \left[\frac{t_0}{2}, T\right]
\end{equation}
for some uniform constants $C_1$, $C_2$, $A$ and $C$. Therefore, 
\begin{equation}\label{0229022}
\tr_{\omega_{\gamma\ve}}\omega_{\gamma\ve j}(t)\leqslant C\ \ \ \text{and}\ \ \ \dot{\phi}_{\gamma\ve j}(t)\geqslant-C\ \ \ \text{on}\ \ \ \left[t_0, T\right],
\end{equation}
where the uniform constant $C$ depends only on $t_0$, $\sup\limits_X\varphi_0$, $\gamma$, $T$, $\theta$, $\eta$ and $\omega$. Then we have
\begin{equation}\label{0229023}
\tr_{\omega_{\gamma\ve j}(t)}\omega_{\gamma\ve}\leqslant n\left(\frac{\omega_{\gamma\ve}^n}{\omega^n_{\gamma\ve j}(t)}\right)\left(\tr_{\omega_{\gamma\ve}}\omega_{\gamma\ve j}(t)\right)^{n-1}\leqslant C\ \ \ \text{on}\ \ \ \left[t_0, T\right].
\end{equation}
Therefore, there exists a uniform constant $C$ such that 
\begin{equation}\label{0229025}
\frac{1}{C}\omega_{\gamma\ve}\leqslant\omega_{\gamma\ve j}(t)\leqslant C\omega_{\gamma\ve}\ \ \ \text{on}\ \ \ \left[t_0, T\right]\times X.
\end{equation}
Furthermore, on any $\left[t_0, T\right]\times K$ for $K\subset\subset X\setminus D$, there exists a uniform constant $C$ such that
\begin{equation}\label{0229026}
\frac{1}{C}\omega\leqslant\omega_{\gamma\ve j}(t)\leqslant C\omega.
\end{equation}
Then Evans-Krylov-Safonov's estimates (see \cite{NVK}) imply the high order estimates
\begin{pro}\label{2171} For any $0<t_0<T<T^\eta_{\gamma,\max}$ and $k\in\mathbb{N}^{+}$, there exists a uniform constant $C$ depending only on $\ve$, $t_0$, $\sup\limits_X\varphi_0$, $\gamma$, $T$, $\theta$, $\eta$ and $\omega$, such that for any $j\geqslant1$,
\begin{equation}
\|\varphi_{\gamma\ve j}(t)\|_{C^{k}\big([t_0,T]\times X\big)}\leqslant C.
\end{equation}
\end{pro}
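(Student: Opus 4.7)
The strategy is to reduce Proposition \ref{2171} to standard parabolic regularity for a smooth non-degenerate complex Monge-Amp\`ere flow on $X$. The key point is that once $\ve>0$ is fixed, the reference form $\omega_{\gamma\ve}=\omega+k\dd\chi_{\gamma\ve}$ and the source term $F_{\gamma\ve}$ appearing in \eqref{TC1} are smooth on all of $X$ with norms depending only on $\ve$, so the remaining task is to upgrade the $j$-uniform estimates already established into $j$-uniform $C^{k}$ control.

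First I would collect the bounds derived earlier in the section. Remark \ref{0228025} combined with \eqref{0229025} gives, on $[t_{0},T]\times X$, a $j$-uniform $L^{\infty}$-bound on $\phi_{\gamma\ve j}(t)$ together with the equivalence $C^{-1}\omega_{\gamma\ve}\leqslant\omega_{\gamma\ve j}(t)\leqslant C\omega_{\gamma\ve}$; since for fixed $\ve>0$ the form $\omega_{\gamma\ve}$ is comparable to $\omega$ on $X$, this translates into a $j$-uniform $C^{2}$-bound for $\varphi_{\gamma\ve j}(t)$ measured against $\omega$. Lemma \ref{0228011} together with Proposition \ref{0228027}, both applied with $t_{0}$ replaced by $t_{0}/2$, simultaneously yield a $j$-uniform two-sided bound on $\dot\varphi_{\gamma\ve j}$ on $[t_{0},T]\times X$. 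Consequently, \eqref{TC1} may be viewed as a uniformly parabolic complex Monge-Amp\`ere equation on $(X,\omega_{\gamma\ve})$ with smooth, $j$-independent coefficients and with uniformly controlled $C^{2}$-solutions.

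At this point the complex parabolic Evans-Krylov-Safonov theorem (cf.\ \cite{NVK}) supplies, for some $\alpha\in(0,1)$, a $j$-uniform estimate
\[
\|\varphi_{\gamma\ve j}(t)\|_{C^{2,\alpha}([t_{0},T]\times X)}\leqslant C(\ve).
\]
Differentiating \eqref{TC1} and rewriting the resulting equation for the first derivatives as a linear parabolic equation with H\"older-continuous coefficients, iterated parabolic Schauder estimates then upgrade this to a $C^{k,\alpha}$-bound for every $k\in\mathbb{N}^{+}$, which is the assertion. The only mild subtlety is to check that at each bootstrap step the Schauder constants remain $j$-independent; this is immediate, because every coefficient appearing in the linearization is a polynomial expression in the $\ve$-smooth background data $\omega_{\gamma\ve}$, $\omega_{\gamma t\ve}$, $F_{\gamma\ve}$ and in the already-controlled jet of $\varphi_{\gamma\ve j}$. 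The $\ve$-dependence of the constant $C$ is permitted by the statement and thus presents no obstacle here; it will only be revisited when passing to the conical limit $\ve\searrow 0$ in a later stage of the argument.
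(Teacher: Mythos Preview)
Your proposal is correct and follows essentially the same route as the paper. The paper's own argument is a single line---after deriving the $j$-uniform metric equivalence \eqref{0229025} on $[t_0,T]\times X$ (which, for fixed $\ve>0$, is an equivalence with a smooth K\"ahler metric on all of $X$), it simply invokes Evans--Krylov--Safonov \cite{NVK}; you have spelled out the standard bootstrap in more detail, but the substance is identical.
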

\begin{rem}\label{0229027}
Fix $\ve\in(0,1)$, by letting $j\to\infty$, we know that there is a smooth strictly $\omega$-psh $\varphi_{\gamma\ve}(t)$ such that $\omega_{\gamma\ve}(t)=\omega+\dd\varphi_{\gamma\ve}(t)$ is a smooth solution to equation
\begin{equation}\label{0229028}
\left\{
\begin{aligned}
&\ \frac{\partial \omega_{\gamma\ve }(t)}{\partial t}=-{\rm Ric}(\omega_{\gamma\ve }(t))+(1-\gamma)\theta_{\ve}+\eta\\
 &\ \ \ \ \ \ \ \ \ \ \ \ \ \ \ \ \ \ \ \ \ \ \ \ \ \ \ \ \ \ \ \ \ \ \ \ \ \ \ \ \ \ \ \ \ \ \ \ \ \ \ \ \ \ \ \ \ \ \ \\
 &\ \omega_{\gamma\ve }(t)|_{t=0}=\hat\omega
\end{aligned}
\right.\tag{$TKRF^{\eta}_{\gamma\ve j}$}
\end{equation}
on $(0,T^\eta_{\gamma,\max})\times X$. By using Guedj-Zeriahi's arguments in \cite{VGAZ2} and Theorem \ref{0219002}, we also conclude that $\omega_{\gamma\ve }(t)$ is unique, $\varphi_{\gamma\ve}(t)$ converges to $\varphi_0$ in $L^1$-sense as $t\searrow0^+$, and so $\omega_{\gamma\ve }(t)$ converges $\tilde\omega$ as $t\searrow0^+$ in the sense of currents.

Furthermore, similar arguments as in Proposition $3.3$ of \cite{JWLXZ1} imply that for any positive $t$, $\varphi_{\gamma\ve}(t)$ is decreasing as $\ve\searrow0^+$.
\end{rem}
\begin{pro}\label{217} For any $0<t_0<T<T^\eta_{\gamma,\max}$, $k\in\mathbb{N}^{+}$ and $K\subset\subset X\setminus D$, there exists a uniform constant $C$ depending only on $t_0$, $\sup\limits_X\varphi_0$, $\gamma$, $T$, $\theta$, $\eta$, $\omega$ and $dist_{\omega}(K,D)$, such that for any $\varepsilon\in(0,1)$ and $j\geqslant1$,
\begin{equation}
\|\varphi_{\gamma\ve j}(t)\|_{C^{k}\big([t_0,T]\times K\big)}\leqslant C.
\end{equation}
\end{pro}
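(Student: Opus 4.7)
My plan is to combine the uniform metric equivalence \eqref{0229026} on compact subsets of $X\setminus D$ with a standard parabolic Evans--Krylov--Safonov and Schauder bootstrap, noting that the only possibly singular quantities appearing in the equation become smooth with $C^k$-bounds uniform in $\ve$ and $j$ once we stay a fixed distance away from $D$.

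First, I would fix an auxiliary compact set $K_1$ with $K\subset\subset K_1\subset\subset X\setminus D$, so that $|s|_h^2\geqslant c_0>0$ on $K_1$ for some constant $c_0$ depending only on $dist_{\omega}(K,D)$. On $K_1$, for every $k\in\mathbb{N}^{+}$, the functions
$$F_{\gamma\ve},\ \ \chi_{\gamma\ve},\ \ \log\left(\ve^2+|s|_h^2\right),$$
as well as the forms $\omega_{\gamma\ve}$ and $\omega_{\gamma t\ve}$, are smooth expressions in $(\ve^2+|s|_h^2)$ with the latter variable bounded below, and therefore carry $C^k(K_1)$-norms bounded uniformly in $\ve\in(0,1)$. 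Moreover, \eqref{0229026} gives $C^{-1}\omega\leqslant\omega_{\gamma\ve j}(t)\leqslant C\omega$ on $[t_0,T]\times K_1$ with $C$ independent of $\ve$ and $j$, so the Monge--Amp\`ere equation
$$\frac{\partial \varphi_{\gamma\ve j}(t)}{\partial t}=\log\frac{(\omega_{\gamma t}+\dd\varphi_{\gamma\ve j}(t))^n}{\omega^n}+h_\gamma+(1-\gamma)\log(\ve^2+|s|_h^2)$$
is uniformly parabolic on $[t_0,T]\times K_1$ with $C^k$-bounded right-hand side, all bounds being independent of $\ve$ and $j$.

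Next, I would apply the parabolic Evans--Krylov--Safonov theorem \cite{NVK} to the concave operator $u\mapsto \log\det\bigl(g_{\bar{j}i}+u_{\bar{j}i}\bigr)$. The $C^2$ bound and uniform parabolicity yield a uniform interior $C^{2,\alpha}$ estimate for $\varphi_{\gamma\ve j}$ on a slightly smaller parabolic cylinder, say $[t_0',T]\times K_2$ with $t_0/2<t_0'<t_0$ and $K\subset\subset K_2\subset\subset K_1$ (to initiate the estimate we replace $t_0$ by $t_0/2$ in \eqref{0229026}). Iterating standard parabolic Schauder estimates---differentiating the equation produces a linear parabolic equation for the spatial derivatives of $\varphi_{\gamma\ve j}$ whose coefficients are $C^{\alpha}$ and uniformly elliptic---then produces $C^{k,\alpha}$-bounds on $[t_0,T]\times K$ for every $k\geqslant1$, uniform in $\ve$ and $j$. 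Time regularity is recovered by repeatedly using the equation itself to convert spatial derivative bounds into time derivative bounds, exactly as in the proof of Proposition \ref{2171}.

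The only new ingredient beyond Proposition \ref{2171} is verifying that the coefficient bounds and parabolicity constants are genuinely independent of $\ve$, which is exactly what the distance hypothesis $K\subset\subset X\setminus D$ provides: the potentially dangerous factors $(\ve^2+|s|_h^2)^{\gamma-1}$ and $\log(\ve^2+|s|_h^2)$ are smooth with uniformly bounded derivatives once $|s|_h^2\geqslant c_0$. A minor bookkeeping point is the shift $\phi_{\gamma\ve j}=\varphi_{\gamma\ve j}-k\chi_{\gamma\ve}$ used to derive \eqref{0229026}, but since $\chi_{\gamma\ve}$ has uniform $C^k(K_1)$-bounds, estimates for $\phi_{\gamma\ve j}$ and $\varphi_{\gamma\ve j}$ are equivalent on $K$ with constants depending on $dist_\omega(K,D)$ but not on $\ve$ or $j$.
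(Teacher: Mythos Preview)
Your proposal is correct and follows essentially the same approach as the paper, which does not give a separate proof for this proposition but simply invokes Evans--Krylov--Safonov estimates \cite{NVK} after establishing the uniform metric equivalence \eqref{0229026} on $[t_0,T]\times K$. Your write-up merely makes explicit what the paper leaves implicit: that all the $\ve$-dependent data ($\chi_{\gamma\ve}$, $F_{\gamma\ve}$, $\log(\ve^2+|s|_h^2)$, $\omega_{\gamma t\ve}$) have uniform $C^k$-bounds once $|s|_h^2\geqslant c_0>0$, so the standard interior parabolic bootstrap applies with constants independent of $\ve$ and $j$.
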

For any $[t_0, T]\times K\subset\subset (0,T^\eta_{\gamma,\max})\times (X\setminus D)$ and $k\in\mathbb{N}^{+}$, $\|\varphi_{\gamma\varepsilon j}(t)\|_{C^{k}([t_0,T]\times K)}$ is uniformly bounded by Proposition \ref{217}. Let $j$ approximate to $\infty$, we obtain that $\|\varphi_{\gamma\varepsilon}(t)\|_{C^{k}([\delta,T]\times K)}$ is also uniformly bounded. Then let $t_0$ approximate to $0$, $T$ approximate to $\infty$ and $K$ approximate to $X\setminus D$, by diagonal rule, we get a sequence $\{\varepsilon_i\}$, such that $\varphi_{\gamma\varepsilon_i}(t)$ converges in $C^\infty_{loc}$-topology on $ (0,T^\eta_{\gamma,\max})\times (X\setminus D)$ to a function $\varphi_\gamma(t)\in C^\infty\left( (0,T^\eta_{\gamma,\max})\times (X\setminus D)\right)$ and satisfies equation
\begin{equation}\label{223}  
\frac{\partial \varphi_{\gamma}(t)}{\partial t}=\log\frac{(\omega_{\gamma t}+\sqrt{-1}\partial\bar{\partial}\varphi_\gamma(t))^{n}}{\omega^{n}}+h_{\gamma}+(1-\gamma)\log|s|_{h}^{2}
\end{equation}
in $ (0,T^\eta_{\gamma,\max})\times (X\setminus D)$. Since $\varphi_{\gamma\varepsilon}(t)$ is monotone decreasing as $\varepsilon\searrow0^+$, we conclude that $\varphi_{\gamma\varepsilon}(t)$ converges in $C^\infty_{loc}$-topology of $ (0,T^\eta_{\gamma,\max})\times (X\setminus D)$ to $\varphi_\gamma(t)$. Combining all above arguments with \eqref{0229025}, we have that for any $0<\delta<T<T^\eta_{\gamma,\max}$, there exists a uniform constant $C$ such that for any $t\in(\delta,T)$, 
\begin{equation}\label{221}
\frac{1}{C}\omega_\gamma\leqslant\omega_\gamma(t)=\omega_{\gamma t}+\sqrt{-1}\partial\overline{\partial}\varphi_\gamma(t)\leqslant C\omega_\gamma\ \ \text{in}\ X\setminus D,
\end{equation}
which implies that $\omega_\gamma(t)$ is a conical K\"ahler metric with cone angle $2\pi\gamma$ along $D$. Moreover, by using similar arguments as in Theorem $3.6$ of \cite{JWLXZ}, we can also conclude that that $\omega_\gamma(t)$ satisfies equation \eqref{CK} in the sense of currents. 

For any positive $t_1$, we assume that $t_1\in[\delta,T]\subseteq(0,T^\eta_{\gamma,\max})$. If we choose $t_0=\frac{\delta}{2}$ in  Remark \ref{022} and Proposition \ref{0228027} and $t=\frac{\delta}{2}$ in Theorem \ref{0228023}, then
\begin{equation}\label{0301001}
\|\dot{\varphi}_{\gamma\ve}(t)\|_{C^0(X)}\leqslant C\ \ \ \text{on}\ \ \ [\delta,T]\times X
\end{equation}
for some uniform constant $C$. Hence
\begin{equation}\label{0301002}
|\varphi_{\gamma\ve}(t)-\varphi_{\gamma\ve}(t_1)|\leqslant C|t-t_1|\ \ \ \text{on}\ \ \ [\delta,T]\times X.
\end{equation}
Let $\ve\to0$, there holds
\begin{equation}\label{0301002}
\|\varphi_{\gamma}(t)-\varphi_{\gamma}(t_1)\|_{C^0(X)}\leqslant C|t-t_1|\ \ \ \text{on}\ \ \ [\delta,T].
\end{equation}
Therefore, $\omega_{\gamma}(t)$ is a solution to the twisted conical \krf \eqref{CK} with initial metric $\omega_{\gamma}(t_1)$ on $[t_1, T^\eta_{\gamma,\max})$ in the sense of Definition $1.1$ in \cite{JWLXZ1}, that is, the potential $\varphi_\gamma(t)$ converges to $\varphi_\gamma(t_1)$ in $L^\infty$-sense. Then by the authors' uniqueness and regularity results (Theorem $3.7$ and Theorem $3.10$ in \cite{JWLXZ1}), $\varphi_\gamma(t)\in\mathcal{C}^{2,\alpha,\gamma}(X)$. So next we only need to prove that $\varphi_\gamma(t)$ converges to $\varphi_{0}$ in $L^{1}$-sense on $X$.
\begin{pro}\label{0229029}
There holds
\begin{equation}\label{0229030}
\lim\limits_{t\rightarrow0^{+}}\|\varphi_\gamma(t)-\varphi_{0}\|_{L^{1}(M)}=0\ \ \ \text{and}\ \ \ \lim\limits_{t\rightarrow0^{+}}\varphi_\gamma(t)=\varphi_{0}\ \ \text{in}\ \ X\setminus D.
\end{equation}
\end{pro}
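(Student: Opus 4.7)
I approach this via the smooth approximations $\varphi_{\gamma\varepsilon}(t)$ constructed in Remark \ref{0229027}: for each fixed $\varepsilon\in(0,1)$, Theorem \ref{0214001} and Theorem \ref{0219002} give $\varphi_{\gamma\varepsilon}(t)\to\varphi_0$ in $L^1(X)$ as $t\to 0^+$, while $\varphi_{\gamma\varepsilon}(t)\searrow\varphi_\gamma(t)$ as $\varepsilon\to 0^+$ at each $t>0$.

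For the $L^1$-statement, the triangle inequality together with the pointwise bound $\varphi_\gamma(t)\leq\varphi_{\gamma\varepsilon}(t)$ yields
\[\|\varphi_\gamma(t)-\varphi_0\|_{L^1(X)}\leq\int_X\bigl(\varphi_{\gamma\varepsilon}(t)-\varphi_\gamma(t)\bigr)\omega^n+\|\varphi_{\gamma\varepsilon}(t)-\varphi_0\|_{L^1(X)}.\]
For each fixed $t>0$, the first term tends to $0$ as $\varepsilon\to 0^+$ by the monotone convergence theorem, using the $\varepsilon$-uniform $L^\infty$ bound supplied by Lemma \ref{0228001} and Remark \ref{0228123}. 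For the second term, a careful inspection of Guedj-Zeriahi's argument for Theorem \ref{0214001} shows that the rate of convergence is governed by their sup bound, $\dot\varphi$-bound, and oscillation estimate -- precisely our Lemma \ref{0228001}, Lemma \ref{0228011}, and Theorem \ref{0228023}, each of which is uniform in $\varepsilon$. Hence the second term is $o(1)$ as $t\to 0^+$ uniformly in $\varepsilon$. Given $\delta>0$, I first choose $t_0>0$ so that the second term is below $\delta/2$ for all $t\in(0,t_0]$ and all $\varepsilon$, then for each such $t$ pick $\varepsilon(t)>0$ small enough to make the first term below $\delta/2$; this gives $\|\varphi_\gamma(t)-\varphi_0\|_{L^1(X)}<\delta$.

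For the pointwise convergence on $X\setminus D$, Proposition \ref{217} provides $\varepsilon$-uniform $C^k_{\mathrm{loc}}$-bounds on $\varphi_{\gamma\varepsilon j}(t)$ away from $D$, which, combined with the decreasing-in-$\varepsilon$ property, yields $C^\infty_{\mathrm{loc}}$-convergence $\varphi_{\gamma\varepsilon}(t)\to\varphi_\gamma(t)$ on compact subsets of $(0,T^\eta_{\gamma,\max})\times(X\setminus D)$. Combined with the a.e.\ pointwise convergence $\varphi_{\gamma\varepsilon}(t)\to\varphi_0$ on $X\setminus D$ as $t\to 0^+$ (extracted from the $L^1$-convergence via a Hartogs-type argument using the upper semicontinuity of $\varphi_0$), a parallel diagonal argument yields $\varphi_\gamma(t)\to\varphi_0$ pointwise almost everywhere on $X\setminus D$.

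The main obstacle is the $\varepsilon$-uniformity of the $L^1$-convergence rate in Theorem \ref{0214001}. This is a bookkeeping check rather than a conceptual difficulty: it reduces to verifying that every quantitative ingredient appearing in Guedj-Zeriahi's proof is captured by the $\varepsilon$-uniform estimates of Section \ref{Existence}, after which the above triangle-and-diagonal scheme delivers both claims.
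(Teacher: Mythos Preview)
Your scheme has two weak points that the paper handles directly.

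\medskip
\textbf{The $L^1$ claim.} Your diagonal argument is logically sound \emph{provided} $\|\varphi_{\gamma\varepsilon}(t)-\varphi_0\|_{L^1}\to 0$ uniformly in $\varepsilon$. You assert this follows from the $\varepsilon$-uniformity of Lemma~\ref{0228001}, Lemma~\ref{0228011} and Theorem~\ref{0228023}, but those estimates control $\varphi_{\gamma\varepsilon}(t)$ only from above and in oscillation. The $L^1$ convergence in Guedj--Zeriahi rests on a \emph{lower} barrier of the form $\varphi_t\geqslant (1-2lt)\varphi_0+o(1)$, built from the solution of an auxiliary complex Monge--Amp\`ere equation. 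Making that barrier uniform in $\varepsilon$ is precisely what the paper carries out: one solves
\[
(\omega+\dd\varphi_{\gamma\varepsilon j})^n=e^{\varphi_{\gamma\varepsilon j}-h_\gamma-2l\varphi_j}\,\frac{\omega^n}{(\varepsilon^2+|s|_h^2)^{1-\gamma}},
\]
bounds $\|\varphi_{\gamma\varepsilon j}\|_{C^0}$ uniformly via Ko{\l}odziej, verifies that $\psi_t=(1-2lt)\varphi_j+t\varphi_{\gamma\varepsilon j}+n(t\log t-t)$ is a subsolution to \eqref{TC}, and passes to the limit to obtain
\[
(1-2lt)\varphi_0-Ct+n(t\log t-t)\leqslant\varphi_\gamma(t).
\]
So your ``bookkeeping check'' is the entire content of the proof; once you perform it, the triangle-inequality detour is no longer needed, because this lower bound together with the elementary upper bound $\varphi_\gamma(t)-\varphi_0\leqslant(\varphi_{\gamma\varepsilon j}(t)-\varphi_j)+(\varphi_j-\varphi_0)$ already yields the $L^1$ convergence directly.

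\medskip
\textbf{The pointwise claim.} Here your argument has a genuine gap. Proposition~\ref{217} gives $C^k$-bounds only on $[t_0,T]\times K$ with $t_0>0$, so it says nothing about the limit $t\to 0^+$. Furthermore, $L^1$-convergence of $\varphi_{\gamma\varepsilon}(t)$ to $\varphi_0$ does not imply pointwise convergence along the full family $t\to 0^+$ (only along subsequences), and a Hartogs-type argument yields at best $\limsup_{t\to 0^+}\varphi_{\gamma\varepsilon}(t)\leqslant\varphi_0$; the inequality $\liminf\geqslant\varphi_0$ again requires the subsolution barrier. The paper's explicit bound $(1-2lt)\varphi_0-Ct+n(t\log t-t)\leqslant\varphi_\gamma(t)$ supplies this pointwise on $X\setminus D$ (indeed on all of $X$), not merely almost everywhere, and the upper bound above gives the matching inequality after sending $t\to 0^+$ then $j\to\infty$.
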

\begin{proof}
For the upper bound, since $\varphi_{\gamma\varepsilon}(t)$ is monotone decreasing as $\varepsilon\searrow0^+$ and $\varphi_{\gamma\varepsilon j}(t)$ is monotone decreasing as $j\to\infty$,
\begin{equation}\label{0229031}
\varphi_\gamma(t)-\varphi_0\leqslant\varphi_{\gamma\ve}(t)-\varphi_0\leqslant\varphi_{\gamma\ve j}(t)-\varphi_j+\varphi_j-\varphi_0
\end{equation}
and
\begin{equation}\label{0502003}
\varphi_\gamma(t)-\varphi_0\leqslant\varphi_{\gamma\ve j}(t)-\varphi_j+\varphi_j-\varphi_0.
\end{equation}

For the lower bound, fix $l>0$ such that $(2l-1)\omega+\nu_\gamma\geqslant0$. We consider equation
\begin{equation}\label{0229032}
(\omega+\dd\varphi_{\gamma\ve j})^n=e^{\varphi_{\gamma\ve j}-h_\gamma-2l\varphi_j}\frac{\omega^n}{\left(\ve^2+\vert s\vert^2_h\right)^{(1-\gamma)}}.
\end{equation}
By Ko{\l}odziej's $L^p$-estimates, there exists a uniform constant $C$ independent of $\ve$ and $j$ such that 
\begin{equation}\label{0229037}
\|\varphi_{\gamma\ve j}\|_{C^0(X)}\leqslant C.
\end{equation}
We set $\psi_t=(1-2lt)\varphi_j+t\varphi_{\gamma\ve j}+n(t\log t-t)$. For $0<t<\min\{\frac{1}{2l}, T^\eta_{\gamma,\max} \}$,  we have 
\begin{equation}\label{0229033}
\begin{split}
\omega_{\gamma t}+\dd\psi_t&=\omega+t\nu_\gamma+(1-2lt)\dd\varphi_j+t\dd\varphi_{\gamma\ve j}\\
&=(1-2lt)\omega_{\varphi_j}+t\omega_{\varphi_{\gamma\ve j}}+t((2l-1)\omega+\nu_\gamma)\\
&\geqslant t\omega_{\varphi_{\gamma\ve j}}.
\end{split}
\end{equation}
Hence
\begin{equation}\label{0229034}
(\omega_{\gamma t}+\dd\psi_t)^n\geqslant t^n\omega_{\varphi_{\gamma\ve j}}^n=t^ne^{\varphi_{\gamma\ve j}-h_\gamma-2l\varphi_j}\frac{\omega^n}{\left(\ve^2+\vert s\vert^2_h\right)^{(1-\gamma)}},
\end{equation}
which implies that
\begin{equation}\label{0229035}
\begin{split}
&\ \ \ \ \log\frac{(\omega_{\gamma t}+\dd\psi_t)^n}{\omega^n}+h_\gamma+(1-\gamma)\log(\varepsilon^2+\vert s\vert_h^2)\\
&\geqslant n\log t+\varphi_{\gamma\ve j}-2l\varphi_j=\frac{\partial\psi_t}{\partial t}.
\end{split}
\end{equation}
So $\psi_t$ is a subsolution to equation \eqref{TC} with initial value $\varphi_j$. Since $\varphi_j\geqslant\varphi_0$, by maximum principle, 
\begin{equation}\label{0229036}
\psi_t\leqslant \varphi_{\gamma\ve j}(t).
\end{equation}
We let $j\nearrow\infty$ and then $\ve\searrow0$, there holds
\begin{equation}\label{02290366}
(1-2lt)\varphi_0-Ct+n(t\log t-t)\leqslant\psi_t\leqslant \varphi_{\gamma}(t).
\end{equation}
Since $\varphi_{\gamma\ve}(t)$ converges to $\varphi_0$ in $L^1$-sense on $X$, $\varphi_{\gamma\ve j}(t)$ converges to $\varphi_j$ in $\mathcal{C}^\infty$-sense on $X$ and $\varphi_{j}$ converges to $\varphi_0$ pointwise in $X\setminus D$ as $t\to0$, combining \eqref{0229031}, \eqref{0502003} with \eqref{02290366}, we complete the proof.
\end{proof}

\section{Uniqueness}\label{Uniqueness}
In this section, we consider the uniqueness of the conical K\"ahler-Ricci flow \eqref{CK}. In order to overcome the problems from the singular term in the equation, we combine Di Nezza-Lu's idea in \cite{NL2017} with the techniques used by the authors in \cite{JWLXZ19}, which ensure the maximum point can be always attained outside the divisor.

In \cite{JWLXZ1}, the authors gave the existence of the conical K\"ahler-Ricci flow \eqref{CK} with initial metric $\tilde\omega=\omega+\dd\tilde\varphi$ whose potential $\tilde\varphi$ is continuous. Generalizing authors' arguments to the unnormalized conical K\"ahler-Ricci flow, we have the following result.
\begin{thm}[Theorem $1.2$ and Remark $1.3$ in \cite{JWLXZ19}]\label{0313000}
There exists a unique solution to the conical K\"ahler-Ricci flow 
 \begin{equation}\label{0313001}
\left\{
\begin{aligned}
 &\ \frac{\partial \omega_{\gamma}(t)}{\partial t}=-{\rm Ric}(\omega_{\gamma}(t))+(1-\gamma)[D]+\eta\\
 &\ \ \ \ \ \ \ \ \ \ \ \ \ \ \ \ \ \ \ \ \ \ \ \ \ \ \ \ \ \ \ \ \ \ \ \ \ \ \ \ \ \ \ \ \ \ \ \ \ \ \ \ \ , \\
 &\ \omega_{\gamma}(t)|_{t=0}=\tilde{\omega}
\end{aligned}
\right.
\end{equation}
on $(0,T^\eta_{\gamma,\max})\times X$ in the following sense
\begin{itemize}
  \item  For $0<\delta<T<T^\eta_{\gamma,\max}$, there exists a constant $C$ such that for any $t\in(\delta,T)$, 
\begin{equation}
C^{-1}\omega_\gamma\leqslant\omega_\gamma(t)\leqslant C\omega_\gamma\ \ \ \text{in}\ \ \ X\setminus D;
\end{equation}
   \item  In $(0,T^\eta_{\gamma,\max})\times(X\setminus D)$, $\omega_\gamma(t)$ satisfies the smooth twisted K\"ahler-Ricci flow
 \begin{equation}\label{0313002}
\frac{\partial \omega_{\gamma}(t)}{\partial t}=-{\rm Ric}(\omega_{\gamma}(t))+\eta;
\end{equation}
  \item On $(0,T^\eta_{\gamma,\max})\times X$, $\omega_\gamma(t)$ satisfies equation \eqref{0313001} in the sense of currents;
  \item There exists a metric potential $\varphi_\gamma(t)\in \mathcal{C}^{2,\alpha,\gamma}\left( X\right)\cap \mathcal{C}^{\infty}\left(X\setminus D\right)$ such that $\omega_\gamma(t)=\omega_{\gamma t}+\sqrt{-1}\partial\bar{\partial}\varphi_\gamma(t)$ and $\lim\limits_{t\rightarrow0^{+}}\|\varphi_\gamma(t)-\tilde\varphi\|_{L^{\infty}(M)}=0$;
  \end{itemize}
\end{thm}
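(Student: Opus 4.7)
The plan is to prove Theorem \ref{0313000} by a smooth approximation scheme in the spirit of Section \ref{Existence}, but exploiting that $\tilde\varphi$ is now continuous rather than merely having zero Lelong number, and then adapting the Di Nezza--Lu type argument combined with the divisor-avoidance trick of \cite{JWLXZ19} for uniqueness. First, I would approximate the continuous potential $\tilde\varphi$ by smooth strictly $\omega$-psh functions $\tilde\varphi_j$ (via Demailly's regularization) that converge uniformly to $\tilde\varphi$, since $\tilde\varphi$ is continuous; simultaneously I would approximate $[D]$ by $\theta_\ve$ exactly as in Section \ref{Preliminaries}. The approximating flows $(TKRF^\eta_{\gamma\ve j})$ are smooth twisted K\"ahler--Ricci flows, whose existence is classical.

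The uniform estimates are actually easier than in Section \ref{Existence}: from the $L^\infty$-bound of $\tilde\varphi$ one gets a two-sided uniform $L^\infty$-bound of $\phi_{\gamma\ve j}(t)$ directly by the maximum principle applied to the sub/super-solutions of Lemmas \ref{0228001}--\ref{0228006}, without any loss in $t$. Consequently the upper bound of $\dot\varphi_{\gamma\ve j}(t)$ of Lemma \ref{0228011} gives a uniform bound on $[0,T]\times X$, and the lower bound of $\dot\phi_{\gamma\ve j}(t)$ from Proposition \ref{0228027} combined with the Laplacian bound of Proposition \ref{0229011} yields, for each $0<t_0<T<T^\eta_{\gamma,\max}$,
\begin{equation*}
\tfrac{1}{C}\omega_{\gamma\ve}\leqslant\omega_{\gamma\ve j}(t)\leqslant C\omega_{\gamma\ve}\quad\text{on }[t_0,T]\times X,
\end{equation*}
for a uniform $C$. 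Letting $j\to\infty$ first and then choosing a diagonal subsequence $\ve_i\searrow0$ (using the monotone decreasing property of $\varphi_{\gamma\ve}(t)$ in $\ve$ as in Remark \ref{0229027}), I obtain a limit $\varphi_\gamma(t)\in C^\infty((0,T^\eta_{\gamma,\max})\times(X\setminus D))$ satisfying the flow smoothly off $D$ and in the sense of currents on $X_T$. The two-sided bound with $\omega_\gamma$ survives in the limit, so $\omega_\gamma(t)$ is conical of cone angle $2\pi\gamma$ along $D$.

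The main new point compared with Section \ref{Existence} is the \emph{uniform} convergence back to the initial data: using the uniform-in-$\ve,j$ $L^\infty$-bound on $\dot\varphi_{\gamma\ve j}(t)$ and the uniform convergence $\tilde\varphi_j\to\tilde\varphi$, a barrier argument comparing $\varphi_{\gamma\ve j}(t)$ with $\tilde\varphi_j\pm Ct$ gives
\begin{equation*}
\|\varphi_{\gamma\ve j}(t)-\tilde\varphi_j\|_{L^\infty(X)}\leqslant Ct,
\end{equation*}
which after passing to the limit yields $\lim_{t\to0^+}\|\varphi_\gamma(t)-\tilde\varphi\|_{L^\infty(X)}=0$. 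The $C^{2,\alpha,\gamma}$ regularity then follows from the authors' regularity theory for the conical K\"ahler--Ricci flow with continuous initial potential (the arguments of \cite{JWLXZ1}), which apply once the flow is started from the time slice $\omega_\gamma(t_1)$ and uses the $L^\infty$-convergence just proved.

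The hardest step, and the one I would treat last, is uniqueness. Given two solutions $\varphi^{(1)}_\gamma(t)$ and $\varphi^{(2)}_\gamma(t)$ in the sense of the theorem, I would form their difference and apply the maximum principle, but the difficulty is that both solutions blow up along $D$ only through the singular term $(1-\gamma)\log|s|_h^2$ which cancels in the difference equation. Following \cite{JWLXZ19}, I would add an auxiliary penalty of the form $\delta\log|s|_h^2$ (or a Campana--Guenancia--P\u{a}un-type term $\delta\chi_{\gamma\ve}$) to force any extremum of the comparison function to be attained in $X\setminus D$, where the equation is smooth and the standard maximum principle applies; the boundary term as $t\to0^+$ is controlled by the $L^\infty$-convergence to $\tilde\varphi$. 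Letting $\delta\to0$ then gives $\varphi^{(1)}_\gamma(t)\equiv\varphi^{(2)}_\gamma(t)$. The delicate point here is to choose the perturbation so that the resulting inequality
\begin{equation*}
\Bigl(\tfrac{\partial}{\partial t}-\Delta_{\omega_\gamma^{(i)}(t)}\Bigr)(\varphi^{(1)}_\gamma-\varphi^{(2)}_\gamma+\text{penalty})\leqslant 0
\end{equation*}
survives on $X\setminus D$ uniformly up to $t=0$, which is precisely where the techniques of \cite{JWLXZ19} are used.
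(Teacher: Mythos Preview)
The paper does not prove this theorem at all; it is quoted as a known result from the authors' earlier work \cite{JWLXZ19} (building on \cite{JWLXZ1}) and used as an input in Section~\ref{Uniqueness}. There is therefore no proof in the present paper to compare your proposal against.

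That said, your outline has a real gap at the $L^\infty$-convergence step. You assert a ``uniform-in-$\ve,j$ $L^\infty$-bound on $\dot\varphi_{\gamma\ve j}(t)$'' on $[0,T]\times X$, but none of the estimates in Section~\ref{Existence} give this: Lemma~\ref{0228011} and Remark~\ref{022} only yield $\dot\varphi_{\gamma\ve j}(t)\leqslant C/t$, and Proposition~\ref{0228027} gives $\dot\varphi_{\gamma\ve j}(t)\geqslant n\log t-C$, both blowing up as $t\to0^+$. Indeed $\dot\varphi_{\gamma\ve j}(0)=\log(\omega_j^n/\omega^n)+h_\gamma+(1-\gamma)\log(\ve^2+|s|_h^2)$ depends on second derivatives of $\tilde\varphi_j$, which are not uniform in $j$. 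So your barrier $\tilde\varphi_j\pm Ct$ only works with $C=C(j,\ve)$, and the displayed bound $\|\varphi_{\gamma\ve j}(t)-\tilde\varphi_j\|_{L^\infty}\leqslant Ct$ is not uniform in $j$.

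The repair (and this is what \cite{JWLXZ1} actually does) is a two-step argument exploiting the continuity of $\tilde\varphi$. For the lower bound, the subsolution in the proof of Proposition~\ref{0229029} already gives $\varphi_\gamma(t)-\tilde\varphi\geqslant -Ct+nt\log t$ uniformly once $\tilde\varphi$ is bounded. For the upper bound, use the monotonicity $\varphi_\gamma(t)\leqslant\varphi_{\gamma\ve j}(t)$ together with the (non-uniform) smooth-flow convergence $\varphi_{\gamma\ve j}(t)\to\tilde\varphi_j$ for each fixed $(j,\ve)$; since $\tilde\varphi_j\searrow\tilde\varphi$ with $\tilde\varphi$ continuous, Dini's theorem upgrades this to uniform convergence, and a diagonal $\delta$-argument (choose $j$ first so that $\|\tilde\varphi_j-\tilde\varphi\|_{L^\infty}<\delta/2$, then $t$ small depending on that $j$) yields $\limsup_{t\to0^+}\sup_X(\varphi_\gamma(t)-\tilde\varphi)\leqslant0$. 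Your uniqueness sketch is essentially Proposition~\ref{0313003}, which the paper does prove, so that part is fine.
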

\begin{defi}\label{0313007}
Let $u_0\in \mathcal{C}^0(X)$ be a $\omega$-psh function. By saying $u(t)\in \mathcal{C}^{2,\alpha,\gamma}\left( X\right)\cap \mathcal{C}^{\infty}\left(X\setminus D\right)$ is a subsolution to the equation
\begin{equation}\label{0313011}
\frac{\partial \varphi_{\gamma}(t)}{\partial t}=\log\frac{\left(\omega_{\gamma t}+\dd\varphi_{\gamma}(t)\right)^n}{\omega^n}+h_\gamma+(1-\gamma)\log|s|_h^2
\end{equation}
 with initial value $u_0$ on $(0,T)\times X$, we mean that $u(t)$ satisfies the following conditions.
\begin{itemize}
  \item  For $0<\delta<T'<T$, there exists a constant $C$ such that for any $t\in(\delta,T')$,
\begin{equation}
C^{-1}\omega_\gamma\leqslant\omega_{\gamma t}+\sqrt{-1}\partial\bar{\partial}u(t)\leqslant C\omega_\gamma\ \ \ \text{in}\ \ \ X\setminus D;
\end{equation}
   \item  In $(0,T)\times(X\setminus D)$, $u(t)$ satisfies equation
 \begin{equation}\label{0313008}
\frac{\partial u(t)}{\partial t}\leqslant\log\frac{\left(\omega_{\gamma t}+\dd u(t)\right)^n}{\omega^n}+h_\gamma+(1-\gamma)\log|s|_h^2;
\end{equation}
  \item $\lim\limits_{t\rightarrow0^{+}}\|u(t)-u_0\|_{L^{\infty}(M)}=0$.
  \end{itemize}
\end{defi}
\begin{defi}\label{0313009}
Let $v_0\in \mathcal{C}^0(X)$ be a $\omega$-psh function. By saying $v(t)\in \mathcal{C}^{2,\alpha,\gamma}\left( X\right)\cap \mathcal{C}^{\infty}\left(X\setminus D\right)$ is a supersolution to equation \eqref{0313011} with initial value $v_0$ on $(0,T)\times X$, we mean that $v(t)$ satisfies the following conditions.
\begin{itemize}
  \item  For $0<\delta<T'<T$, there exists a constant $C$ such that for any $t\in(\delta,T')$,
\begin{equation}
C^{-1}\omega_\gamma\leqslant\omega_{\gamma t}+\sqrt{-1}\partial\bar{\partial}v(t)\leqslant C\omega_\gamma\ \ \ \text{in}\ \ \ X\setminus D;
\end{equation}
   \item  In $(0,T)\times(X\setminus D)$, $v(t)$ satisfies equation
 \begin{equation}\label{0313010}
\frac{\partial v(t)}{\partial t}\geqslant\log\frac{\left(\omega_{\gamma t}+\dd v(t)\right)^n}{\omega^n}+h_\gamma+(1-\gamma)\log|s|_h^2;
\end{equation}
  \item $\lim\limits_{t\rightarrow0^{+}}\|v(t)-v_0\|_{L^{\infty}(M)}=0$.
  \end{itemize}
\end{defi}
\begin{defi}\label{03130090}
Let $\phi_0\in \mathcal{C}^0(X)$ be a $\omega$-psh function. By saying $\phi(t)\in \mathcal{C}^{2,\alpha,\gamma}\left( X\right)\cap \mathcal{C}^{\infty}\left(X\setminus D\right)$ is a solution to equation \eqref{0313011} with initial value $\phi_0$ on $(0,T)\times X$, we mean that $\phi(t)$ is both a subsolution and a supersolution.
\end{defi}
Next, we prove some maximum principles.
\begin{pro}\label{0313003}
If $u(t)$ is a subsolution to equation \eqref{0313011} with initial value $u_0\in \mathcal{C}^0(X)\cap {\rm PSH}(X,\omega)$, and $v(t)$ is a supersolution to equation \eqref{0313011} with initial value $v_0\in \mathcal{C}^0(X)\cap {\rm PSH}(X,\omega)$. Then 
\begin{equation}\label{0313006}
u(t)-v(t)\leqslant\sup\limits_X\left(u_0-v_0\right).
\end{equation}
\end{pro}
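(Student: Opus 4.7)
The plan is to adapt the Di Nezza--Lu style comparison argument from \cite{NL2017} to the conical setting by adding a singular perturbation of the form $\delta\log|s|_h^2$, which pushes the spatial maximum of the test function off the divisor $D$, following the device introduced in \cite{JWLXZ19}. Fix any $0<\delta_0<T'<T$, and for $\epsilon>0$ and $\delta>0$ small I would consider
\begin{equation*}
\Phi_{\epsilon,\delta}(t,x):=u(t,x)-v(t,x)-\epsilon(t-\delta_0)+\delta\log|s|_h^2(x),
\end{equation*}
having rescaled $h$ so that $|s|_h\leqslant 1$ on $X$. Since $\delta\log|s|_h^2\to -\infty$ on $D$, the function $\Phi_{\epsilon,\delta}$ is upper semicontinuous on the compact set $[\delta_0,T']\times X$ and any maximum must lie in the smooth locus $X\setminus D$.

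Suppose for contradiction that $\sup_{[\delta_0,T']\times X}\Phi_{\epsilon,\delta}>\sup_X\Phi_{\epsilon,\delta}(\delta_0,\cdot)$, and let $(t_0,x_0)\in(\delta_0,T']\times(X\setminus D)$ attain the supremum. At $(t_0,x_0)$ one has $\partial_t\Phi_{\epsilon,\delta}\geqslant 0$ and $\dd\Phi_{\epsilon,\delta}\leqslant 0$ as $(1,1)$-forms; since $\dd\log|s|_h^2=-\theta$ on $X\setminus D$ by Poincar\'e--Lelong, the spatial maximum condition reads
\begin{equation*}
\omega_{\gamma t_0}+\dd u(t_0,x_0)\leqslant \omega_{\gamma t_0}+\delta\theta+\dd v(t_0,x_0).
\end{equation*}
Both sides are positive definite for $\delta$ small (using $\omega_{\gamma t_0}+\dd v\geqslant C^{-1}\omega_\gamma$ from Definition \ref{0313009}), and taking determinants yields
\begin{equation*}
\log\frac{(\omega_{\gamma t_0}+\dd u)^n}{(\omega_{\gamma t_0}+\dd v)^n}(t_0,x_0)\leqslant \log\det\bigl(I+\delta(\omega_{\gamma t_0}+\dd v)^{-1}\theta\bigr)\leqslant C_0\,\delta
\end{equation*}
for a constant $C_0=C_0(\delta_0,T',\gamma,\omega,\theta,\eta)$ independent of $\epsilon$ and $\delta$. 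Subtracting the supersolution inequality of Definition \ref{0313009} from the subsolution inequality of Definition \ref{0313007} at $(t_0,x_0)$ cancels the singular term $(1-\gamma)\log|s|_h^2$ and produces $\partial_t(u-v)(t_0,x_0)\leqslant C_0\delta$. Combined with $\partial_t\Phi_{\epsilon,\delta}(t_0,x_0)\geqslant 0$, this forces $\epsilon\leqslant C_0\delta$, contradicting the choice $\delta<\epsilon/(2C_0)$.

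Hence $\Phi_{\epsilon,\delta}(t,x)\leqslant \sup_X(u(\delta_0)-v(\delta_0))$ throughout $[\delta_0,T']\times X$. Passing first $\delta\searrow 0$ pointwise on $X\setminus D$, then $\delta_0\searrow 0$ using the $L^\infty$ initial-value statements of Definitions \ref{0313007} and \ref{0313009}, and finally $\epsilon\searrow 0$, one obtains $u(t,x)-v(t,x)\leqslant \sup_X(u_0-v_0)$ on $(0,T)\times(X\setminus D)$, which extends across $D$ by the continuity of $u,v\in\mathcal{C}^{2,\alpha,\gamma}(X)$. The main obstacle is precisely the singularity along $D$: without the $\delta\log|s|_h^2$ barrier the spatial maximum of $u-v$ could sit on the divisor, where $u$ and $v$ have only conical regularity and the classical maximum principle is unavailable; the $\delta\log|s|_h^2$ perturbation is the key device that relocates every candidate maximum into the smooth locus, at the price of an $O(\delta)$ Hessian error that must be -- and is -- absorbed by the $\epsilon$-slack.
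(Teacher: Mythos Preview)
Your proof is correct and follows essentially the same strategy as the paper: the barrier $\delta\log|s|_h^2$ to relocate the spatial maximum off $D$, an $\epsilon$-slack in $t$, the maximum principle on $[\delta_0,T']\times X$, and the limits $\delta\to 0$, $\delta_0\to 0$, $\epsilon\to 0$ are all identical to the paper's $a\to 0$, $t_1\to 0$, $\epsilon\to 0$. The only cosmetic difference is at the maximum point: the paper linearizes the Monge--Amp\`ere quotient via the integral-averaged Laplacian $\hat\Delta=\int_0^1 g_s^{i\bar j}\partial_i\bar\partial_{\bar j}\,ds$ along the convex path and bounds the barrier error by $-a\hat\Delta\log|s|_h^2\leq aC(t_1,T)$ (subtracted off as an extra linear-in-$t$ term), whereas you work directly with the nonlinear operator and bound $\log\det\bigl(I+\delta(\omega_{\gamma t_0}+\dd v)^{-1}\theta\bigr)\leq C_0\delta$, absorbing the error into the $\epsilon$-slack---both are standard and equivalent.
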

\begin{proof}
For any $0<t_1<T'<T$ and $a>0$. We consider $\Psi(t)=u(t)-v(t)+a\log|s|_h^2$ on $[t_1,T']\times X$. If we denote
\begin{equation}\label{0314001}
\hat{\Delta}=\int_0^1 g_{s}^{i\bar{j}}\frac{\partial^2}{\partial z^i\partial\bar{z}^j}ds,
\end{equation}
where $g_s$ is the metric corresponding to $\omega_s=\omega_{\gamma t}+s\dd u(t)+(1-s)\dd v(t)$. Then $\Psi(t)$ evolves along the following equation
\begin{equation}\label{0314002}
  \frac{\partial \Psi(t)}{\partial t}\leqslant\hat{\Delta}\Psi(t)-a\hat{\Delta}\log|s|_h^{2}.
\end{equation}
Since $\omega_{u(t)}$ and $\omega_{v(t)}$ are bounded from below by $c_0\omega_\gamma$ and $\omega_\gamma\geqslant c_1\omega$, we obtain
\begin{equation}\label{0314005}
\omega_s\geqslant c\omega
\end{equation}
for some constants $c_0$ and $c$ depending on $t_1$ and $T$.
Combining this with $-\sqrt{-1}\partial\bar{\partial}\log|s|_h^2=\theta$, we conclude that there exists a constant $C(t_1,T)$ depending on $t_1$ and $T$ such that
\begin{equation}\label{0314003}
-\hat{\Delta}\log|s|_h^2=\int_0^1\tr_{\omega_s}\theta ds\leq C(t_1,T)
\end{equation}
in $X\setminus D$. Then 
\begin{equation}\label{0314004}
  \frac{\partial \Psi(t)}{\partial t}\leqslant\hat{\Delta}\Psi(t)+aC(t_1,T).
\end{equation}

Let $\tilde{\Psi}=\Psi-aC(t_1,T)(t-t_1)-\epsilon (t-t_1)$. It is obvious that the space maximum of  $\Psi(t)$ on $[t_1,T']\times X$ is attained away from $D$.  Let $(t_0,x_0)$ be the maximum point. If $t_0>t_1$, by the maximum principle, at $(t_0,x_0)$, we have
\begin{equation}\label{0314006}
 0\leqslant \Big(\frac{\partial }{\partial t}-\hat{\Delta}\Big)\tilde{\Psi}(t)\leqslant -\epsilon,
\end{equation}
which is impossible, hence $t_0=t_1$. Then for $(t,x)\in [t_1,T]\times X$, we obtain
\begin{equation}\label{0314007}
u(t)-v(t)\leqslant \sup\limits_X\left(u(t_1)-v(t_1)\right)+aC(t_1,T)+\epsilon T-a\log|s|_h^2.
\end{equation}
Since $\lim\limits_{t\rightarrow0^+}\|u(t)-u_0\|_{L^{\infty}(M)}=0$ and $\lim\limits_{t\rightarrow0^{+}}\|v(t)-v_0\|_{L^{\infty}(M)}=0$, we let $a\rightarrow0$ and then $t_1\rightarrow0^+$, there holds
\begin{equation}\label{0314008}
u(t)-v(t)\leqslant \sup\limits_X\left(u_0-v_0\right)+\epsilon T.
\end{equation}
Then \eqref{0313006} is proved after we let $\epsilon\rightarrow0$.
\end{proof}
\begin{pro}\label{0318001}
Assume that $\phi(t)$ is a solution to equation \eqref{0313011} with initial value $\phi_0\in \mathcal{C}^0(X)\cap {\rm PSH}(X,\omega)$ and that $\psi(t)$ is a weak subsolution to equation \eqref{0313011} with initial value $\psi_0\in{\rm PSH}(X,\omega)$ admitting Lelong number zero. If $\psi_0\leqslant\phi_0$, then 
\begin{equation}\label{0318002}
\psi(t)-\phi(t)\leqslant\sup\limits_X\left(\psi_0-\phi_0\right).
\end{equation}
\end{pro}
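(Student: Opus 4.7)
The plan is to reduce Proposition \ref{0318001} to the continuous-initial-data comparison in Proposition \ref{0313003} by regularizing the singular subsolution datum $\psi_0$. Using Demailly's regularization, choose a decreasing sequence $\psi_{0,j}\searrow\psi_0$ of smooth strictly $\omega$-plurisubharmonic functions. Since each $\psi_{0,j}$ is continuous, Theorem \ref{0313000} supplies a unique solution $\psi_j(t)$ of \eqref{0313011} with initial value $\psi_{0,j}$. Applying Proposition \ref{0313003} to the two solutions $\psi_j$ and $\phi$ produces
\begin{equation*}
\psi_j(t)-\phi(t)\leqslant\sup\limits_X(\psi_{0,j}-\phi_0),
\end{equation*}
and since $\psi_{0,j}\searrow\psi_0$ with $\psi_0$ upper semicontinuous, monotone convergence shows the right-hand side converges to $\sup\limits_X(\psi_0-\phi_0)$ as $j\to\infty$.

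\medskip

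The heart of the argument is then to show $\psi(t)\leqslant\psi_j(t)$ for each fixed $j$. For this I would mimic the maximum-principle argument of Proposition \ref{0313003}, now applied to the auxiliary function
\begin{equation*}
\widetilde\Psi(t,x)=\psi(t,x)-\psi_j(t,x)+a\log|s|_h^2-\bigl(aC(t_1,T')+\epsilon\bigr)(t-t_1)
\end{equation*}
on $[t_1,T']\times X$ for small $t_1>0$, $a>0$, $\epsilon>0$. The singular term $a\log|s|_h^2$ drives $\widetilde\Psi$ to $-\infty$ along $D$, so its spatial supremum is attained in $X\setminus D$. Because $\psi$ and $\psi_j$ both satisfy $C^{-1}\omega_\gamma\leqslant\omega_{\gamma t}+\sqrt{-1}\partial\bar\partial(\cdot)\leqslant C\omega_\gamma$ on $[t_1,T']\times(X\setminus D)$, the bound $-\hat\Delta\log|s|_h^2\leqslant C(t_1,T')$ from the proof of Proposition \ref{0313003} remains valid; combining the subsolution inequality for $\psi$ with the equation for $\psi_j$ then rules out an interior space-time maximum of $\widetilde\Psi$. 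Sending $a\to 0$ and subsequently $\epsilon\to 0$ yields
\begin{equation*}
\psi(t)-\psi_j(t)\leqslant\sup\limits_X\bigl(\psi(t_1)-\psi_j(t_1)\bigr)\qquad\text{for all }t\in[t_1,T'].
\end{equation*}

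\medskip

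The main obstacle is the limit $t_1\to 0^+$. For $\psi_j$ the convergence $\psi_j(t_1)\to\psi_{0,j}$ is uniform because $\psi_{0,j}$ is continuous, but for the weak subsolution $\psi$ one only has $\psi(t_1)\to\psi_0$ in $L^1(X)$, which is too weak to control $\sup\limits_X\bigl(\psi(t_1)-\psi_{0,j}\bigr)$ directly. To close this gap I would invoke Hartogs' lemma for the family $\{\psi(t_1,\cdot)\}_{t_1>0}$: each member is $\omega_{\gamma t_1}$-plurisubharmonic and hence $(1+Ct_1)\omega$-plurisubharmonic, the family is uniformly bounded above by a subsolution analogue of Lemma \ref{0228001} together with the standard sub-mean-value estimate for $\omega$-psh functions, and $\psi(t_1)\to\psi_0\leqslant\psi_{0,j}$ in $L^1(X)$. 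Hartogs' lemma then produces
\begin{equation*}
\limsup_{t_1\to 0^+}\sup\limits_X\bigl(\psi(t_1)-\psi_{0,j}\bigr)\leqslant\sup\limits_X(\psi_0-\psi_{0,j})\leqslant 0,
\end{equation*}
so letting $t_1\to 0^+$ in the bound of the previous paragraph gives $\psi(t)\leqslant\psi_j(t)$ throughout $(0,T^\eta_{\gamma,\max})\times X$. Combining this with the first paragraph and sending $j\to\infty$ completes the proof of $\psi(t)-\phi(t)\leqslant\sup\limits_X(\psi_0-\phi_0)$.
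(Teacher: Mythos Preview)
Your argument is correct, but it takes a longer route than the paper's. The paper avoids the Demailly regularization $\psi_{0,j}\searrow\psi_0$ entirely by observing that $\psi(t_1)$ is \emph{already} continuous for every $t_1>0$: the metric bounds $C^{-1}\omega_\gamma\leqslant\omega_{\gamma t}+\sqrt{-1}\partial\bar\partial\psi(t)\leqslant C\omega_\gamma$ force $\dot\psi$ to be bounded on $[t_1,T']\times(X\setminus D)$, hence $\|\psi(t)-\psi(t')\|_{L^\infty}\leqslant C|t-t'|$, and Ko{\l}odziej's $L^p$-estimate then gives $\psi(t_1)\in\mathcal{C}^0(X)$. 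One can therefore apply Proposition~\ref{0313003} \emph{directly} to the pair $(\psi,\phi)$ on $[t_1,T']$, obtaining
\[
\psi(t)-\phi(t)\leqslant\sup_X\bigl(\psi(t_1)-\phi(t_1)\bigr),
\]
and the Hartogs step you describe (with $\phi(t_1)\to\phi_0$ uniformly and $\psi(t_1)\to\psi_0$ in $L^1$) finishes the proof in one stroke.

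Your approach inserts the auxiliary family $\psi_j(t)$ between $\psi$ and $\phi$, but the hard step---showing $\psi(t)\leqslant\psi_j(t)$---requires exactly the same ``restart at $t_1$ and invoke Hartogs'' maneuver that the paper applies directly to $\psi$ versus $\phi$. So the regularization buys nothing and can be dropped. One small advantage of your formulation is that your maximum-principle step uses only the subsolution inequality for $\psi$, whereas the paper's proof invokes the full equation to get the two-sided bound on $\dot\psi$; if one really wants the statement for genuine weak \emph{sub}solutions (as the hypothesis reads), your treatment is slightly more careful on that point.
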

\begin{proof}
For any $0<t_1<T'<T$, since $\psi(t)$ is a weak solution to equation \eqref{0313011}, there exists a constant $C$ such that
\begin{equation}\label{0318003}
C^{-1}\omega_\gamma\leqslant\omega_{\gamma t}+\sqrt{-1}\partial\bar{\partial}\psi(t)\leqslant C\omega_\gamma.
\end{equation}
Hence
\begin{equation}\label{0318004}
\frac{\partial \psi(t)}{\partial t}=\log\frac{\left(\omega_{\gamma t}+\dd\psi(t)\right)^n}{\omega^n}+h_\gamma+(1-\gamma)\log|s|_h^2
\end{equation}
is uniformly bounded in $[t_1,T']\times (X\setminus D)$, which implies that
\begin{equation}\label{0318005}
\|\psi(t)-\psi(t')\|_{L^\infty(X)}\leqslant C|t-t'|\ \ \ \text{for all}\ t,t'\in[t_1,T']
\end{equation}
and that $\psi(t_1)$ is continuous by Ko{\l}odziej's $L^p$-estimates, and so $\psi(t)$ is a solution to equation \eqref{0313011} with initial value $\psi(t_1)$. By using Proposition \ref{0313003}, on $[t_1,T']\times X$, we have
\begin{equation}\label{0418006}
\psi(t)-\phi(t)\leqslant\sup\limits_X\left(\psi(t_1)-\phi(t_1)\right).
\end{equation}
Since $\psi(t_1)$ converges to $\psi_0$ in $L^1$-sense and $\phi(t_1)$ converges to $\phi_0$ in $L^\infty$-sense, it then follows form Hartogs' Lemma that 
\begin{equation}\label{0418006}
\psi(t)-\phi(t)\leqslant\sup\limits_X\left(\psi_0-\phi_0\right)
\end{equation}
on $(0,T)\times X$ after letting $t_1\to0$.
\end{proof}
\begin{pro}\label{0314009}
Assume that $\psi(t)$ is a weak solution to equation \eqref{0313011} with initial value $\psi_0$ and that the Lelong number of $\psi_0$ is zero. Let $l$ be a constant such that $2l>T^\eta_{\gamma,\max}$, then there exists a uniform constant such that 
\begin{equation}\label{0314010}
(1-2lt)\psi_0-Ct+n(t\log t-t)\leqslant\psi(t)
\end{equation}
for all $t\in(0,\frac{1}{2l})$.
\end{pro}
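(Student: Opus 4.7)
My plan is to imitate the subsolution construction that appeared in the lower bound part of the proof of Proposition \ref{0229029}, and then to compare the resulting subsolution directly against the given weak solution $\psi(t)$ using a maximum principle in the spirit of Propositions \ref{0313003} and \ref{0318001}. I first apply Demailly's regularization to obtain smooth strictly $\omega$-psh functions $\psi_{0,j}\searrow\psi_0$. Since $\psi_0$ has zero Lelong number, Skoda's integrability theorem makes $e^{-2l\psi_{0,j}}\omega^n/|s|_h^{2(1-\gamma)}$ uniformly bounded in $L^p(X)$ for some $p>1$, so solving the static Monge-Amp\`ere equation
\begin{equation*}
(\omega+\dd\varphi_j)^n=e^{\varphi_j-h_\gamma-2l\psi_{0,j}}\frac{\omega^n}{|s|_h^{2(1-\gamma)}}
\end{equation*}
(through the $\varepsilon$-regularization $(\varepsilon^2+|s|_h^2)^{1-\gamma}$ and then passing $\varepsilon\searrow0$, as in Proposition \ref{0229029}) and invoking Ko{\l}odziej's $L^p$-estimate produces $\varphi_j\in\mathcal{C}^{2,\alpha,\gamma}(X)\cap\mathcal{C}^\infty(X\setminus D)$ with $\|\varphi_j\|_{L^\infty}\leq C$ uniformly in $j$.

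Setting $W_j(t,x)=(1-2lt)\psi_{0,j}(x)+t\varphi_j(x)+n(t\log t-t)$ and choosing $l$ large enough that $(2l-1)\omega+\nu_\gamma\geq0$, the decomposition used in \eqref{0229033} yields $\omega_{\gamma t}+\dd W_j\geq t(\omega+\dd\varphi_j)$, and the static Monge-Amp\`ere identity then implies that $W_j$ is a classical subsolution of \eqref{0313011} in the sense of Definition \ref{0313007}, with initial value $W_j(0)=\psi_{0,j}$. Because $\psi(t)$ is continuous on $X$ for $t>0$ by the $\mathcal{C}^{2,\alpha,\gamma}$ regularity in Definition \ref{04.5}, I can compare $W_j$ and $\psi$ on $[t_1,T']\times X$ for $0<t_1<T'<\frac{1}{2l}$ following the strategy of Propositions \ref{0313003} and \ref{0318001}: the auxiliary function
\begin{equation*}
\Phi=W_j-\psi+a\log|s|_h^2-aCt-\varepsilon(t-t_1)
\end{equation*}
satisfies $(\partial_t-\hat\Delta)\Phi\leq-\varepsilon$ by the concavity of $\log\det$ together with $-\hat\Delta\log|s|_h^2\leq C$, where $\hat\Delta$ denotes the linearisation along the segment between $\omega_{W_j}$ and $\omega_\psi$. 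The $a\log|s|_h^2$ barrier, which is the divisor-handling technique from \cite{JWLXZ19}, forces the maximum of $\Phi$ to be attained away from $D$, and the parabolic maximum principle then places it at $t=t_1$; letting $a,\varepsilon\to0^+$ delivers $W_j(t)-\psi(t)\leq\sup_X(W_j(t_1)-\psi(t_1))$ on $[t_1,T']$.

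The main obstacle is the joint limit $t_1\to0^+$, $j\to\infty$: naively $\sup_X(W_j(0)-\psi_0)=\sup_X(\psi_{0,j}-\psi_0)$ can be $+\infty$ at the polar points of $\psi_0$. However, the target inequality $(1-2lt)\psi_0-Ct+n(t\log t-t)\leq\psi(t)$ is itself vacuous on the polar set (its left-hand side being $-\infty$ there), so I only need a pointwise argument on $\{\psi_0>-\infty\}$. Combining the $L^1$ convergence $\psi(t_1)\to\psi_0$ with Hartogs' lemma (which controls $\limsup_{t_1\to0}\psi(t_1,x)\leq\psi_0(x)$ pointwise) and the monotone decrease $\psi_{0,j}\searrow\psi_0$ together with the uniform $L^\infty$ bound on $\varphi_j$, I can pass to the limit on the non-polar set to obtain $(1-2lt)\psi_0-Ct+n(t\log t-t)\leq\psi(t)$. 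The delicate balance between the merely $L^1$ convergence of $\psi(t_1)$ and the pointwise (non-uniform) convergence $\psi_{0,j}\to\psi_0$ on $\{\psi_0>-\infty\}$ is precisely the point where Di Nezza-Lu's idea must be carefully adapted to the conical setting, and is the crux of the argument.
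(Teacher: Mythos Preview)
Your barrier construction and the comparison on $[t_1,T']\times X$ are correct and do give
\[
W_j(t)-\psi(t)\leqslant\sup_X\bigl(W_j(t_1)-\psi(t_1)\bigr).
\]
The gap is in the limit step, and your proposed fix does not close it. The right-hand side is a \emph{global} supremum, so the fact that the target inequality is vacuous on the polar set of $\psi_0$ is irrelevant: the supremum can be realised precisely near that polar set. Concretely, if $p\in X\setminus D$ satisfies $\psi_0(p)=-\infty$, then by the pointwise convergence in Definition~\ref{04.5} one has $\psi(t_1,p)\to-\infty$ as $t_1\to0$, while $\psi_{0,j}(p)$ is finite for every fixed $j$; hence $\sup_X\bigl(W_j(t_1)-\psi(t_1)\bigr)\to+\infty$. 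Sending $j\to\infty$ first does not help either, since that replaces $\psi_{0,j}$ by $\psi_0$ and the requirement $\sup_X\bigl(\psi_0-\psi(t_1)\bigr)\to0$ is exactly the lower bound on $\psi(t_1)$ you are trying to establish. Hartogs' lemma only supplies the \emph{upper} semicontinuity $\limsup_{t_1\to0}\psi(t_1,x)\leqslant\psi_0(x)$, which goes the wrong way.

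The paper avoids this circularity by a different choice of barrier: instead of regularising $\psi_0$, it feeds the already continuous function $\psi(t_1)$ (which has zero Lelong number, with $L^p$ control on $e^{-2l\psi(t_1)}$ uniform in $t_1$) into the static conical Monge--Amp\`ere equation, sets
\[
\Psi(t)=(1-2lt)\psi(t_1)+t\varphi_{\gamma t_1}+n(t\log t-t),
\]
and compares $\Psi(t)$ with the time-shifted solution $\psi(t+t_1)$. Now $\Psi(0)=\psi(t_1)=\psi(0+t_1)$ \emph{identically}, so the initial supremum is zero and there is nothing to control. After the maximum principle yields $\Psi(t)\leqslant\psi(t+t_1)$, the limit $t_1\to0$ is straightforward: $\psi(t+t_1)\to\psi(t)$ uniformly by Remark~\ref{0502001}, while $(1-2lt)\psi(t_1)\to(1-2lt)\psi_0$ pointwise on $X\setminus D$. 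This time-shift device (using $\psi(t_1)$ rather than an external approximation of $\psi_0$) is the key idea from Di~Nezza--Lu that your outline is missing.
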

\begin{proof}
Fix some $l\geqslant1$ such that $2l>T^\eta_{\gamma,\max}$ and $2l-1>\frac{1}{T^\eta_{\gamma,\max}}$. Taking a sufficiently small positive constant $t_1$ such that
\begin{equation}\label{0315001}
(2l-1)\omega+(1+2lt_1)\nu_\gamma>0.
\end{equation}
For any $t_1>0$, we consider equation
\begin{equation}\label{0314012}
(\omega+\dd\varphi_{\gamma t_1})^n=e^{\varphi_{\gamma t_1}-h_\gamma-2l\psi(t_1)}\frac{\omega^n}{\vert s\vert_h^{2(1-\gamma)}}.
\end{equation}
Since the Lelong numbers of $\psi_0$ and $\psi(t_1)$ are zero, and $\psi(t_1)$ converges to $\psi_0$ in $L^1$-sense as $t_1\to0$, from Lemma $3.5$ in \cite{NL2014} (see also Theorem $3.1$ in \cite{Zer01}), there holds 
\begin{equation}\label{0314013}
\int_Xe^{-ph_\gamma-2pl\psi(t_1)}\frac{\omega^n}{\vert s\vert_h^{2p(1-\gamma)}}\leqslant C
\end{equation}
for some uniform $p>1$ and constant $C$ independent of $t_1$. By Ko{\l}odziej's $L^p$-estimates, there exists a uniform constant $C$ independent of $t_1$ such that 
\begin{equation}\label{0314014}
\|\varphi_{\gamma t_1}\|_{C^0(X)}\leqslant C.
\end{equation}
We set $\Psi(t)=(1-2lt)\psi(t_1)+t\varphi_{\gamma t_1}+n(t\log t-t)$. From Guenancia-P$\breve{a}$un's result (Proposition $1$ in \cite{GP1}), we can conclude that $\varphi_{\gamma t_1}$ is $C^\alpha$ on $M$ and smooth in $X\setminus D$, and so is $\Psi(t)$. For $0\leqslant t\leqslant\frac{1}{2l}-t_1$,  we have 
\begin{equation}\label{0314015}
\begin{split}
\omega_{\gamma(t+t_1)}+\dd\Psi(t)&=\omega+(t+t_1)\nu_\gamma+(1-2lt)\dd \psi(t_1)+t\dd\varphi_{\gamma  t_1}\\
&=(1-2lt)\omega_{\psi(t_1)}+t\omega_{\varphi_{\gamma  t_1}}+t\left((2l-1)\omega+(1+2lt_1)\nu_\gamma\right)\\
&\geqslant t\omega_{\varphi_{\gamma  t_1}}.
\end{split}
\end{equation}
Hence
\begin{equation}\label{0315002}
\left(\omega_{\gamma(t+t_1)}+\dd\Psi(t)\right)^n\geqslant t^n\omega_{\varphi_{\gamma  t_1}}^n=t^ne^{\varphi_{\gamma t_1}-h_\gamma-2l\psi(t_1)}\frac{\omega^n}{\vert s\vert_h^{2(1-\gamma)}},
\end{equation}
which implies that
\begin{equation}\label{0315003}
\begin{split}
&\ \ \ \ \log\frac{(\omega_{\gamma(t+t_1)}+\dd\Psi(t))^n}{\omega^n}+h_\gamma+(1-\gamma)\log\vert s\vert_h^2\\
&\geqslant n\log t+\varphi_{\gamma t_1}-2l\psi(t_1)=\frac{\partial\Psi(t)}{\partial t}.
\end{split}
\end{equation}

We consider $\tilde\Psi(t)=\Psi(t)-\psi(t+t_1)+a\log|s|_h^2$ on $[t_1,\frac{1}{2l}-t_1]\times X$. If we denote
\begin{equation}\label{0315004}
\hat{\Delta}=\int_0^1 g_{s}^{i\bar{j}}\frac{\partial^2}{\partial z^i\partial\bar{z}^j}ds,
\end{equation}
where $g_s$ is the metric corresponding to $\omega_s=\omega_{\gamma t}+s\dd\Psi(t)+(1-s)\dd \psi(t+t_1)$. Then $\tilde\Psi(t)$ evolves along the following equation
\begin{equation}\label{0315005}
  \frac{\partial \tilde\Psi(t)}{\partial t}\leqslant\hat{\Delta}\tilde\Psi(t)-a\hat{\Delta}\log|s|_h^{2}.
\end{equation}
From equation \eqref{0314015}, for $t_1\leqslant t\leqslant\frac{1}{2l}-t_1$, we have
\begin{equation}\label{03150005}
\omega_{\gamma t}+\dd\Psi(t)\geqslant(1-2lt)\omega_{\psi(t_1)}\geqslant2lt_1\omega_{\psi(t_1)}\geqslant c(l,t_1)\omega,
\end{equation}
and so is
\begin{equation}\label{03150006}
\omega_s\geqslant c(l,t_1)\omega
\end{equation}
for some constants $c(l,t_1)$ depending on $l$ and $t_1$. Combining this with $-\sqrt{-1}\partial\bar{\partial}\log|s|_h^2=\theta$, we conclude that there exists a constant $C(l,\delta,t_1)$ such that
\begin{equation}\label{0315007}
-\hat{\Delta}\log|s|_h^2=\int_0^1\tr_{\omega_s}\theta ds\leq C(l,t_1)
\end{equation}
in $X\setminus D$. Then 
\begin{equation}\label{0315008}
  \frac{\partial \tilde\Psi(t)}{\partial t}\leqslant\hat{\Delta}\tilde\Psi(t)+aC(l,t_1).
\end{equation}

Let $\Phi(t)=\tilde\Psi-aC(l,t_1)t-\epsilon t$. It is obvious that the space maximum of  $\Phi(t)$ on $[0,\frac{1}{2l}-t_1]\times X$ is attained away from $D$.  Let $(t_0,x_0)$ be the maximum point. If $t_0>0$, by the maximum principle, at $(t_0,x_0)$, we have
\begin{equation}\label{0315009}
 0\leqslant \Big(\frac{\partial }{\partial t}-\hat{\Delta}\Big)\Phi(t)\leqslant -\epsilon,
\end{equation}
which is impossible, hence $t_0=0$. Then for $(t,x)\in [0,\frac{1}{2l}-t_1]\times X$, we obtain
\begin{equation}\label{0315010}
\Psi(t)-\psi(t+t_1)\leqslant \sup\limits_X\left(\Psi(0)-\psi(t_1)\right)+aTC(l,t_1)+\epsilon T-a\log|s|_h^2.
\end{equation}
Let $\epsilon\rightarrow0$ and then $a\rightarrow0$, there holds
\begin{equation}\label{0315011}
\Psi(t)-\psi(t+t_1)\leqslant \sup\limits_X\left(\Psi(0)-\psi(t_1)\right)=0.
\end{equation}
By using \eqref{0314014}, we have
\begin{equation}\label{0315012}
(1-2lt)\psi(t_1)-Ct+n(t\log t-t)\leqslant \psi(t+t_1).
\end{equation}
By using Remark \ref{0502001}, after letting $t_1\to0$, we have
\begin{equation}\label{0315013}
(1-2lt)\psi_0-Ct+n(t\log t-t)\leqslant \psi(t)
\end{equation}
for all $t\in(0,\frac{1}{2l})$.
\end{proof}
\begin{thm}\label{0318007}
Assume that $\psi(t)$ and $\phi(t)$ are weak solutions to equation \eqref{0313011} with initial values $\psi_0$ and $\phi_0$ respectively, where $\psi_0,\ \phi_0\in{\rm PSH}(X,\omega)$ with Lelong numbers zero. If $\psi_0\leqslant\phi_0$, then on $(0,T)\times X$, 
\begin{equation}\label{0318008}
\psi(t)\leqslant\phi(t).
\end{equation}
\end{thm}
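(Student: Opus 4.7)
The plan is to follow Di Nezza--Lu's approximation strategy, adapted to the conical setting via Proposition~\ref{0318001}: approximate $\phi_0$ from above by a decreasing sequence of continuous $\omega$-psh functions, build approximating flows with these continuous initial data, and show that the approximating flows sandwich both $\psi$ and $\phi$ and converge back to $\phi$. By Demailly's regularization \cite{JPD}, I would take smooth strictly $\omega$-psh functions $\phi_{0,j}\searrow\phi_0$ pointwise, and Theorem~\ref{0313000} provides, for each $j$, a unique solution $\phi_j(t)\in\mathcal{C}^{2,\alpha,\gamma}(X)\cap\mathcal{C}^\infty(X\setminus D)$ to equation~\eqref{0313011} with initial value $\phi_{0,j}$. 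Since $\psi_0\leq\phi_0\leq\phi_{0,j}$ and also $\phi_0\leq\phi_{0,j}$, applying Proposition~\ref{0318001} twice yields
\begin{equation*}
\psi(t)\leq\phi_j(t)\quad\text{and}\quad\phi(t)\leq\phi_j(t).
\end{equation*}
Moreover, Proposition~\ref{0313003} applied to the decreasing data $\phi_{0,j+1}\leq\phi_{0,j}$ gives $\phi_{j+1}(t)\leq\phi_j(t)$, so the pointwise decreasing limit $\tilde\phi(t):=\lim_{j\to\infty}\phi_j(t)$ exists, with $\phi(t)\leq\tilde\phi(t)$ and $\psi(t)\leq\tilde\phi(t)$. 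The theorem then reduces to establishing the identification $\tilde\phi\equiv\phi$.

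To control $\tilde\phi$ at positive times, I would invoke the uniform estimates of Section~\ref{Existence} applied to the flows $\phi_j$ themselves. Since $\sup_X\phi_{0,j}$ is uniformly bounded above (by $\sup_X\phi_{0,1}$), Theorem~\ref{0228023} gives a uniform oscillation bound $\osc_X\phi_j(t_0)\leq M(t_0)$, and Proposition~\ref{0229011} then yields uniform Laplacian bounds on $[2t_0,T]\times X$ independent of $j$. Combined with monotonicity in $j$ and the equicontinuity that follows from the Laplacian bounds, $\phi_j\to\tilde\phi$ uniformly on $X$ at each positive time; in particular $\tilde\phi(t)$ is continuous and satisfies \eqref{0313011} on $(0,T^\eta_{\gamma,\max})\times(X\setminus D)$ by passing to the limit in the PDE.

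With $\tilde\phi$ and $\phi$ both continuous at any $t_0>0$, Proposition~\ref{0313003} on $[t_0,T]\times X$ gives
\begin{equation*}
\tilde\phi(t)-\phi(t)\leq\sup_X\bigl(\tilde\phi(t_0)-\phi(t_0)\bigr),
\end{equation*}
so it suffices to prove the right-hand side tends to $0$ as $t_0\to0^+$. After normalizing so that $\sup_X\phi_0\leq0$, Proposition~\ref{0314009} yields the uniform pointwise lower bound $\phi(t_0)\geq\phi_0-\delta(t_0)$ with $\delta(t_0)\searrow0$, and the continuity of $\phi_{0,j}$ gives $\phi_j(t_0)\to\phi_{0,j}$ uniformly on $X$ as $t_0\to0^+$. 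Combining these bounds with the $L^1$-convergence $\phi_{0,j}\searrow\phi_0$ and a Hartogs-type lemma for $\omega$-psh functions, the inequality $\tilde\phi(t_0)-\phi(t_0)\leq\phi_j(t_0)-\phi(t_0)$ can be driven to zero in supremum by first letting $t_0\to0^+$ for fixed $j$ and then sending $j\to\infty$ via a diagonal argument. Once $\tilde\phi=\phi$ is established, passing to the limit $j\to\infty$ in $\psi(t)\leq\phi_j(t)$ completes the proof. The main obstacle is precisely this last closing step: $\phi_0$ may attain $-\infty$ on a pluripolar set, so neither $\phi_{0,j}-\phi_0$ nor $\phi_0-\phi(t_0)$ is controlled in $L^\infty$, and the zero Lelong number hypothesis is the essential input that allows the Hartogs-type semicontinuity argument to close uniformly on~$X$.
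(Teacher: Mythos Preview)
Your proposal has a genuine gap at exactly the point you flag as ``the main obstacle.''  The identification $\tilde\phi=\phi$ cannot be closed by the Hartogs/diagonal argument you sketch.  Concretely, for fixed $j$ you have $\phi_j(t_0)\to\phi_{0,j}$ uniformly and $\phi(t_0)\geq\phi_0-\delta(t_0)$, so
\[
\sup_X\bigl(\phi_j(t_0)-\phi(t_0)\bigr)\ \leq\ \sup_X\bigl(\phi_{0,j}-\phi_0\bigr)+\epsilon(t_0)+\delta(t_0),
\]
but $\sup_X(\phi_{0,j}-\phi_0)=+\infty$ whenever $\phi_0$ attains $-\infty$.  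Hartogs' lemma only gives \emph{upper} semicontinuity of the $\omega$-psh family $\tilde\phi(t_0)$ against a fixed continuous majorant of $\phi_0$; it does not produce a uniform lower bound on $\phi(t_0)$ beyond the pointwise estimate from Proposition~\ref{0314009}, and the difference of two quasi-psh families is not governed by Hartogs.  Without $\tilde\phi=\phi$, the chain $\psi(t)\leq\phi_j(t)\searrow\tilde\phi(t)$ yields only $\psi(t)\leq\tilde\phi(t)$, not $\psi(t)\leq\phi(t)$.

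The paper avoids this obstruction by a different mechanism: rather than approximating the \emph{initial datum} $\phi_0$ from above, it uses the given flow $\phi$ itself at a small positive time as a continuous barrier.  Since $\phi(t_1)$ is continuous and, by Proposition~\ref{0314009}, satisfies $\phi(t_1)+C(t_1)\geq(1-2lt_1)\phi_0\geq\psi_0$ (after normalizing $\sup_X\phi_0\leq0$), one may apply Proposition~\ref{0318001} directly to compare $\psi(t)$ with the time-shifted flow $u(t)=\phi(t+t_1)+C(t_1)$, and then let $t_1\to0$.  A second ingredient you do not address is that this time-shift only makes $\psi$ a subsolution of the shifted equation when $\nu_\gamma\geq0$; the paper handles general $\nu_\gamma$ by the exponential reparametrization $t\mapsto\tilde C^{-1}(1-e^{-t})$, which replaces $\nu_\gamma$ by the positive form $\tilde C\omega+\nu_\gamma$.
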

\begin{proof}
We first prove this theorem under assumption that $\nu_\gamma$ is semi-positive. For any $t_1>0$, we consider 
\begin{equation}\label{0318008}
u(t)=\phi(t+t_1)+C(t_1)
\end{equation}
where $C(t_1)=Ct_1-n(t_1\log t_1-t_1)$ comes from \eqref{0314010}. Hence $u(t)$ is a solution to equation
\begin{equation}\label{0318009}
\frac{\partial u(t)}{\partial t}=\frac{\partial \phi(t+t_1)}{\partial t}=\log\frac{\left(\omega+(t+t_1)\nu_\gamma+\dd u(t)\right)^n}{\omega^n}+h_\gamma+(1-\gamma)\log|s|_h^2
\end{equation}
with initial value
\begin{equation}\label{0318009}
u(0)=\phi(t_1)+C(t_1)\geqslant\left(1-2lt_1\right)\phi_0.
\end{equation}
On the other hand, $\psi(t)$ satisfies equation
\begin{equation}\label{0318010}
\begin{split}
\frac{\partial \psi(t)}{\partial t}&=\log\frac{\left(\omega+t\nu_\gamma+\dd \psi(t)\right)^n}{\omega^n}+h_\gamma+(1-\gamma)\log|s|_h^2\\
&\leqslant\log\frac{\left(\omega+(t+t_1)\nu_\gamma+\dd \psi(t)\right)^n}{\omega^n}+h_\gamma+(1-\gamma)\log|s|_h^2,
\end{split}
\end{equation}
that is, $\psi(t)$ is a weak subsolution with initial value $\psi_0$. Then by using Proposition \ref{0318001}, we have
\begin{equation}\label{0318011}
\psi(t)-u(t)\leqslant\sup\limits_X\left(\psi_0-u(0)\right)\leqslant\sup\limits_X\left(\psi_0-\left(1-2lt_1\right)\phi_0\right)\leqslant\sup\limits_X\left(\psi_0-\phi_0\right)+2lt_1\sup\limits_X\phi_0.
\end{equation}
Let $t_1\to0$, there holds
\begin{equation}\label{0318012}
\psi(t)-\phi(t)\leqslant\sup\limits_X\left(\psi_0-\phi_0\right),
\end{equation}
that is,
\begin{equation}\label{0318020}
\psi(t)\leqslant\phi(t).
\end{equation}

For general $\nu_\gamma$, we consider 
\begin{equation}\label{0318016}
u(t)=\tilde{C}e^{t}\psi\left(\frac{1}{\tilde{C}}(1-e^{-t})\right).
\end{equation}
where $\tilde{C}$ is a constant satisfying $\tilde{C}>\frac{1}{T^\eta_{\gamma,\max}}$. Hence $u(t)$ is weak subsolution to equation
\begin{equation}\label{0318017}
\begin{split}
\frac{\partial u(t)}{\partial t}&=u(t)+\dot\psi\left(\frac{1}{\tilde{C}}(1-e^{-t})\right)\\
&=\log\frac{\left(\omega+\left(\frac{1}{\tilde{C}}(1-e^{-t})\right)\nu_\gamma+\dd \psi\left(\frac{1}{\tilde{C}}(1-e^{-t})\right)\right)^n}{\omega^n}+u(t)\\
&\ \ \ +h_\gamma+(1-\gamma)\log|s|_h^2\\
&=\log\frac{\left(\tilde{C}\omega+\left(e^{t}-1\right)\left(\tilde{C}\omega+\nu_\gamma\right)+\dd u(t)\right)^n}{\left(\tilde{C}\omega\right)^n}+v(t)-nt\\
&\ \ \ +h_\gamma+(1-\gamma)\log|s|_h^2\\
&\leqslant\log\frac{\left(\tilde{C}\omega+\left(e^{t+t_1}-1\right)\left(\tilde{C}\omega+\nu_\gamma\right)+\dd u(t)\right)^n}{\left(\tilde{C}\omega\right)^n}+v(t)-nt\\
&\ \ \ +h_\gamma+(1-\gamma)\log|s|_h^2
\end{split}
\end{equation}
with initial value $u(0)=\tilde{C}\psi_0$, where we use $\tilde{C}\omega+\nu_\gamma$ is positive in the last inequality since $\tilde{C}>\frac{1}{T^\eta_{\gamma,\max}}$.

We define
\begin{equation}\label{0318013}
v(t)=\tilde{C}e^{t+t_1}\phi\left(\frac{1}{\tilde{C}}(1-e^{-(t+t_1)})\right)+nt_1(e^t-1)+\tilde{C}e^{t+t_1}C\left(\frac{1}{\tilde{C}}\left(1-e^{-t_1}\right)\right).
\end{equation}
where $C\left(\frac{1}{\tilde{C}}\left(1-e^{-t_1}\right)\right)=C\cdot\frac{1}{\tilde{C}}\left(1-e^{-t_1}\right)-n\left(\left(\frac{1}{\tilde{C}}\left(1-e^{-t_1}\right)\right)\log \left(\frac{1}{\tilde{C}}\left(1-e^{-t_1}\right)\right)-\frac{1}{\tilde{C}}\left(1-e^{-t_1}\right)\right)$ comes from \eqref{0314010}. Hence $v(t)$ is a solution to equation
\begin{equation}\label{0318014}
\begin{split}
\frac{\partial v(t)}{\partial t}&=\tilde{C}e^{t+t_1}\phi\left(\frac{1}{\tilde{C}}(1-e^{-(t+t_1)})\right)+\dot\phi\left(\frac{1}{\tilde{C}}(1-e^{-(t+t_1)})\right)+nt_1e^t+\tilde{C}e^{t+t_1}C\left(\frac{1}{\tilde{C}}\left(1-e^{-t_1}\right)\right)\\
&=\log\frac{\left(\omega+\left(\frac{1}{\tilde{C}}(1-e^{-(t+t_1)})\right)\nu_\gamma+\dd \phi\left(\frac{1}{\tilde{C}}(1-e^{-(t+t_1)})\right)\right)^n}{\omega^n}+v(t)+nt_1\\
&\ \ \ +h_\gamma+(1-\gamma)\log|s|_h^2\\
&=\log\frac{\left(\tilde{C}\omega+\left(e^{t+t_1}-1\right)\left(\tilde{C}\omega+\nu_\gamma\right)+\dd v(t)\right)^n}{\left(\tilde{C}\omega\right)^n}+v(t)-nt\\
&\ \ \ +h_\gamma+(1-\gamma)\log|s|_h^2\\
\end{split}
\end{equation}
with initial value
\begin{equation}\label{0318015}
v(0)=\tilde{C}e^{t_1}\phi\left(\frac{1}{\tilde{C}}(1-e^{-t_1})\right)+\tilde{C}e^{t_1}C\left(\frac{1}{\tilde{C}}\left(1-e^{-t_1}\right)\right).
\end{equation}
Then by similar arguments as in the proof of Proposition \ref{0318001} and by using Proposition \ref{0314009}, we have
\begin{equation}\label{0318017}
u(t)-v(t)\leqslant\sup\limits_X\left(u(0)-v(0)\right)\leqslant\tilde{C}\sup\limits_X\left(\psi_0-e^{t_1}\left(1-2l\frac{1}{\tilde{C}}\left(1-e^{-t_1}\right)\right)\phi_0\right),
\end{equation}
which implies that
\begin{equation}\label{0318018}
\begin{split}
&\ \ \ \psi\left(\frac{1}{\tilde{C}}(1-e^{-t})\right)-e^{t_1}\phi\left(\frac{1}{\tilde{C}}(1-e^{-(t+t_1)})\right)\\
&\leqslant e^{-t}\sup\limits_X\left(\psi_0-\phi_0\right)+ e^{-t}\left(1-e^{t_1}\left(1-2l\frac{1}{\tilde{C}}\left(1-e^{-t_1}\right)\right)\right)\sup\limits_X\phi_0\\
&\ \ \ +\frac{nt_1}{\tilde{C}}(1-e^{-t})+e^{t_1}C\left(\frac{1}{\tilde{C}}\left(1-e^{-t_1}\right)\right).
\end{split}
\end{equation}
After letting $t_1\to0$, we have
\begin{equation}\label{0318019}
\psi\left(\frac{1}{\tilde{C}}(1-e^{-t})\right)-\phi\left(\frac{1}{\tilde{C}}(1-e^{-t})\right)\leqslant e^{-t}\sup\limits_X\left(\psi_0-\phi_0\right)\leqslant0,
\end{equation}
which implies that
\begin{equation}\label{0318021}
\psi(t)\leqslant\phi(t)\ \ \ \text{on}\ \ \ (0,1)\times X.
\end{equation}
Fix $t_0\in(0,1)$, from Remark \ref{0502001}, we know that $\psi(t)$ and $\phi(t)$ are solutions to equation \eqref{0313011} with initial values $\psi(t_0)$ and $\phi(t_0)$ respectively. Then by Proposition \ref{0313003}, we have
\begin{equation}\label{031802111}
\psi(t)\leqslant\phi(t)\ \ \ \text{on}\ \ \ (t_0,T)\times X.
\end{equation}
Combining \eqref{0318021} with \eqref{031802111}, we complete the proof.
\end{proof}
\begin{thm}\label{0318007}
Assume that the Lelong number of $\varphi_0\in{\rm PSH}(X,\omega)$ is zero. Then there exists a unique weak solution to equation \eqref{0313011} with initial value $\varphi_0$, that is, there exists a unique weak solution to the twisted conical K\"ahler-Ricci flow \eqref{CK} on $[0,T^\eta_{\gamma,\max})$. 
\end{thm}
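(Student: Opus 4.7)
The plan is to combine the existence construction from Section \ref{Existence} with the comparison principle proved just above, since the comparison principle for weak solutions with zero Lelong number initial data is exactly what converts existence into a full existence-and-uniqueness statement.

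For existence, I would invoke the smooth approximation scheme already developed: recall that $\omega_{\gamma\varepsilon}(t)$ was produced by sending $j\to\infty$ in the doubly-approximated flow \eqref{TC1}, and then a diagonal argument sending $\varepsilon\searrow 0$ along a subsequence yielded a limit $\varphi_\gamma(t)\in \mathcal{C}^{2,\alpha,\gamma}(X)\cap \mathcal{C}^\infty(X\setminus D)$ satisfying all four bullets of Definition \ref{04.5}. In particular, \eqref{221} gives the two-sided bound with $\omega_\gamma$ on compact subintervals, \eqref{223} gives the pointwise equation off $D$, the current equation follows as in Theorem $3.6$ of \cite{JWLXZ}, and Proposition \ref{0229029} supplies the $L^1$-convergence of the potential back to $\varphi_0$. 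This produces the desired weak solution to \eqref{CK}.

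For uniqueness, the idea is to use the previous comparison theorem (Theorem \ref{0318007}, the one immediately above) directly. Suppose $\omega_\gamma^{(1)}(t)$ and $\omega_\gamma^{(2)}(t)$ are two weak solutions to \eqref{CK} in the sense of Definition \ref{04.5}, with corresponding potentials $\psi(t)$ and $\phi(t)$ solving \eqref{0313011} in the weak sense of Definition \ref{0221001002}. Both have the same initial value $\varphi_0$, whose Lelong number is zero by hypothesis. Applying the comparison theorem once with $\psi_0=\varphi_0\leqslant\varphi_0=\phi_0$ yields $\psi(t)\leqslant\phi(t)$ on $(0,T^\eta_{\gamma,\max})\times X$; swapping the roles of $\psi$ and $\phi$ gives the reverse inequality. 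Hence $\psi(t)\equiv\phi(t)$, and the two associated currents $\omega_\gamma^{(1)}(t)$ and $\omega_\gamma^{(2)}(t)$ coincide.

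The main content is therefore already absorbed into the comparison principle, whose proof had to split into the semi-positive $\nu_\gamma$ case (handled via the time-shift $u(t)=\phi(t+t_1)+C(t_1)$ together with Proposition \ref{0314009} to absorb the missing lower bound as $t_1\to 0$) and the general case (handled by the rescaling $u(t)=\tilde{C}e^t\psi(\tilde{C}^{-1}(1-e^{-t}))$ which reduces to the semi-positive setting on a short time interval, after which Remark \ref{0502001} and Proposition \ref{0313003} extend the inequality to all of $(0,T^\eta_{\gamma,\max})$). The only subtlety at this final stage is verifying that the weak solutions produced in Section \ref{Existence} genuinely fall under the hypotheses of the comparison theorem: this is immediate from the uniform estimate \eqref{031800399} and the $L^\infty$-continuity \eqref{0502002} recorded in Remark \ref{0502001}, so no additional argument is needed.
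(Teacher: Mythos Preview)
Your proposal is correct and matches the paper's approach exactly: the paper gives no separate proof of this final theorem, treating it as the immediate combination of the existence construction in Section~\ref{Existence} with the comparison principle (the first Theorem~\ref{0318007}) applied twice with $\psi_0=\phi_0=\varphi_0$. Your summary of how the comparison principle itself was obtained (time-shift in the semi-positive case, exponential rescaling in the general case, then extension via Proposition~\ref{0313003}) is also accurate, and your observation that Remark~\ref{0502001} ensures any weak solution meets the hypotheses is the right justification.
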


\end{document}